\newcommand*{\headingdot}[1]{.}
\title{Stochastic Prediction Equilibrium for Dynamic Traffic Assignment}
\author{Lukas Graf}
\author{Tobias Harks}
\author{Michael Markl}
\affil{University of Passau}
\affil{\normalsize\href{mailto:lukas.graf@uni-passau.de;\%20tobias.harks@uni-passau.de;\%20michael.markl@uni-passau.de?subject=Comment\%20on\%20your\%20paper\%20\%22Stochastic\%20Prediction\%20Equilibrium\%20for\%20Dynamic\%20Traffic\%20Assignment\%22}{\{lukas.graf, tobias.harks, michael.markl\}@uni-passau.de}}
\date{}
\newcommand{\optDisplay}[1]{\[#1\]}
	\newenvironment{revisedEnv}{}{}
	\newcommand{\revised}[1]{#1}
	\newcommand{\revMinor}[1]{#1}
	\newcommand{\lukas}[2][]{}
	\newcommand{\lukasI}[2][]{}
	\newcommand{\marginnote}[1]{}
	\NewDocumentCommand{\michael}{O{}+m}{}
	\NewDocumentCommand{\tobias}{O{}+m}{}
	\NewDocumentCommand{\revisionComment}{O{}+m}{}
\definecolor{darkgreen}{rgb}{0.2,0.8,0.55}
\tikzstyle{vertex} = [shape=circle,draw=black]
\tikzstyle{edge} = [draw,->,thick]
	\theoremstyle{plain}
	\newtheorem{theorem}{Theorem}[section]
\newcommand{\AddThm}[2]{
	\newtheorem{#1}[theorem]{#2}
	\AddToHook{env/#1/begin}{\crefalias{theorem}{#1}}
}
	\theoremstyle{definition}
	\theoremstyle{plain}
	\theoremstyle{remark}
\newenvironment{proofClaim}[1][]{\ifthenelse{\equal{#1}{}}{\begin{proof}}{\begin{proof}[#1]}}{\end{proof}}
\newenvironment{structuredproof}{\begin{description}}{\end{description}}
\newcommand{\proofitem}[1]{\item[{\textnormal{\textit{#1:}}}]}
\newcommand{\ie}{i.e.\ }
\newcommand{\eg}{e.g.\ }
\newcommand{\cf}{cf.\ }
\newcommand{\wrt}{w.r.t.\ }
\newcommand{\wlg}{wlog.\ }
\NewDocumentCommand{\stpath}{O{s}O{t}}{\ensuremath{#1,#2\text{-path}}}
\NewDocumentCommand{\stwalk}{O{s}O{t}}{\ensuremath{#1,#2\text{-walk}}}
\NewDocumentCommand{\stnetwork}{O{s}O{t}}{\ensuremath{#1,#2\text{-network}}}
\newcommand{\IR}{\mathbb{R}}
\newcommand{\IRnn}{\IR_{\geq 0}}
\newcommand{\IRp}{\IR_{>0}}
\newcommand{\IN}{\mathbb{N}}
\newcommand{\IRnnInf}{\overline{\IR}_{\geq0}}
\newcommand{\IP}{\mathbb{P}}
\newcommand{\Prob}{\IP}
\newcommand{\eps}{\varepsilon}
\newcommand*{\boldeps}{\boldsymbol{\eps}}
\newcommand{\emptyarg}{\,\boldsymbol{\cdot}\,}
\newcommand{\foraall}{\forall_{\text{a.a.}}}
\newcommand{\deriv}[2][]{\ifthenelse{\equal{#1}{}}{\partial #2}{\partial_{#1} #2}}
\newcommand{\diff}{\,\mathrm{d}}
\newcommand{\abs}[1]{\left|#1\right|}
\newcommand{\smallabs}[1]{|#1|}
\newcommand{\norm}[1]{\left\lVert#1\right\rVert}
\newcommand{\smallnorm}[1]{\lVert#1\rVert}
\newcommand{\normFct}{\smallnorm{\cdot}}
\NewDocumentCommand{\dist}{O{} m m}{\mathrm{d}_{#1}(#2, #3)}
\newcommand{\restr}[2]{\left.#1\right\rvert_{#2}}
\newcommand{\smallrestr}[2]{#1\rvert_{#2}}
\DeclareMathOperator{\argmin}{\mathrm{argmin}}
\NewDocumentCommand{\pLocInt}{O{p}O{\IR}}{L^{#1}_{\mathrm{loc}}(#2)}
\NewDocumentCommand{\pInt}{O{p}O{\IR}}{L^{#1}(#2)}
\newcommand{\leb}{\lambda}
\DeclareMathOperator*{\symDiff}{\Delta}
\NewDocumentCommand{\CharF}{O{}}{\ifthenelse{\equal{#1}{}}
	{\mathds{1}}
	{\mathds{1}_{#1}}
}
\DeclareMathOperator{\graph}{\mathrm{graph}}
\newcommand{\edgesTo}[2][]{\delta\ifthenelse{\equal{#1}{}}{}{_{#1}}^-(#2)}
\newcommand{\edgesFrom}[2][]{\delta\ifthenelse{\equal{#1}{}}{}{_{#1}}^+(#2)}
\newcommand{\head}[1][]{\mathop{\mathrm{head}}\ifthenelse{\equal{#1}{}}{}{_{#1}}}
\newcommand{\tail}[1][]{\mathop{\mathrm{tail}}\ifthenelse{\equal{#1}{}}{}{_{#1}}}
\newcommand{\PathSet}{\mathcal{P}}
\newcommand{\dest}{t}
\newcommand{\Path}{p}
\newcommand{\PathAlt}{q}
\newcommand*{\horizon}{T}
\newcommand{\ROp}{\mathscr{R}}
\newcommand*{\ROe}{r}
\newcommand{\EdgeLoading}{\Phi}
\newcommand*{\bound}{B}
\newcommand*{\flowSet}{\rateFcts^{E\times I\times\{+,-\}}}
\NewDocumentCommand{\pCost}{}{\hat C}
\NewDocumentCommand{\pEdges}{}{\hat E}
\NewDocumentCommand{\pActive}{}{\hat\Theta} 
\NewDocumentCommand{\nodeInflow}{m m m}{#1^+_{#2,#3}}
\NewDocumentCommand{\rateFcts}{}{\mathcal{\revMinor{F}}}
\NewDocumentCommand{\splitFcts}{}{\mathcal{S}}
\newcommand*{\coherent}{coherent}
\newcommand*{\uniqueExtProp}{unique extension property}
\newcommand*{\extProp}{extension existence property}
\newcommand*{\Contr}{\Psi}
\newcommand*{\flowVol}{X}
\newcommand*{\qLen}{z}
\newcommand*{\capa}{\nu}
\newcommand*{\ffttime}{c^0}
\newcommand*{\ttime}{c}
\newcommand*{\exitTime}{\tau}
\newcommand*{\DPE}{dynamic prediction equilibrium}
\newcommand*{\DPEs}{dynamic prediction equilibria}
\newcommand*{\SPE}{stochastic prediction equilibrium}
\newcommand*{\SPEs}{stochastic prediction equilibria}
\newcommand*{\IDEs}{instantaneous dynamic equilibria}
\definecolor{flowColLabel}{HTML}{0C5388} \colorlet{flowCol}{flowColLabel!60!white}
\newcommand{\toverset}[2]{\overset{\text{#1}}{#2}}
\newcommand{\refsym}[1]{\ensuremath{(\ifthenelse{\equal{#1}{1}}{\ast}{\ifthenelse{\equal{#1}{2}}{\#}{\ifthenelse{\equal{#1}{3}}{\triangle}{\ifthenelse{\equal{#1}{4}}{\bigcirc}{\ifthenelse{\equal{#1}{5}}{\diamond}
						{\color{red}NaN}}}}})}}
\newcommand{\symoverset}[2]{\toverset{\refsym{#1}}{#2}}
\newcommand{\abstr}{
Stochastic effects significantly influence the dynamics of traffic flows.
Many dynamic traffic assignment (DTA) models attempt to capture these effects by prescribing a specific ratio that determines how flow splits across different routes based on the routes' costs.\par{}
\revised{Other models take a game-theoretic perspective and describe the equilibria resulting from the individual traffic participants' decisions instead of prescribing flow splits, however they usually neglect stochastic effects.}
In this paper,  we propose a new \revised{unifying} framework for DTA that incorporates the interplay between the routing decisions of each single traffic participant, the \revised{potentially} stochastic nature of predicting the future state of the network, and the physical flow dynamics.
Our framework consists of an \emph{edge loading operator} 
modeling the physical flow propagation
and a \emph{routing operator} modeling the routing behavior
of traffic participants.
The routing operator is assumed to be set-valued and, thus, capable 
to model complex (deterministic) equilibrium conditions as well as stochastic equilibrium conditions assuming that measurements for predicting traffic are noisy.
As our main results, we derive several quite general equilibrium existence and uniqueness results
which not only subsume known results from the literature but also lead to new results.
Specifically, for the new stochastic prediction equilibrium, we show existence and uniqueness under natural assumptions on the probability distribution over the predictions.
}
\begin{document}

	\maketitle
	\begin{abstract}\abstr\end{abstract}

	\clearpage
	\tableofcontents
	\clearpage

\section{Introduction}

Understanding traffic flows is an important task that possibly impacts billions of commuters, with key challenges including managing congestion, energy consumption and carbon emissions.
Congestion phenomena and resulting increased emissions are heavily impacted by the routing decisions of individual drivers,
which are influenced by predictions for the delays of road segments (see, e.g., \textcite{GNNSurvey} for an overview of convolutional and graph neural network based approaches).
A key aspect in modelling traffic flows is the underlying complex and self-referential system:
the routing decisions depend on and, at the same time, influence the forecasting models, because they directly change the underlying signature of traffic flows.

In this paper, we address this interplay focusing on the popular dynamic traffic assignment (DTA) framework, on which there has been substantial work over the past decades (see the book of \textcite{Ford62}, or the more recent surveys by \textcite{Friesz19}, \textcite{Skutella08} and \textcite{SchmandMathTrafficModels}).

Such DTA models typically consist of two main parts:
A \emph{physical model} describing how the traffic flow propagates through the network and, in particular, creates and is affected by congestion, and a \emph{behavioural model} capturing how individual drivers choose their routes through the network.
The latter can be either \emph{prescriptive} or \emph{descriptive}:
A prescriptive behavioural model consists of rules declaring explicitly how, under any given state of the network, the flow splits over the available routes.
These rules then have to be chosen such that they replicate the actual observed traffic flow.
As an example, \textcite{Bayen2019} considered adaptive node routing protocols based on real-time traffic predictions.
A descriptive behavioural model, on the other hand, describes the possible equilibrium states resulting from the assumed behaviour -- typically as some variant of Wardrop's first principle \cite[p.~345]{Wardrop}, which states that in an equilibrium all used routes have the same and minimal perceived cost.
Here, ``perceived costs'' can be the current travel times, the actual (future) travel times or some other predictions about those future travel times.

These equilibrium models typically assume that all predictions are deterministic.
However, especially for models with limited information, this assumption seems to be unrealistic as, in reality, both measurements of the current state of the network and predictions about the future evolution will come with some random noise.
Because of this noise, the perceived cost will vary among agents  so that a shortest route may be different from one agent to another.

\subsection{Our Contribution}

We present a new DTA framework that integrates both prescriptive and descriptive routing models. 
The main ingredients of our model are two operators:
an \emph{edge loading operator} and a \emph{routing operator}.
The former determines the resulting edge outflows under given edge inflows and serves as a proxy for the underlying physical model\revMinor{.
It }subsumes several well-known physical models.
The latter operator determines the precise outflow splits for a node given the node inflows.
The routing operator is assumed to be set-valued and, thus, can model prescriptive routing behaviour (via single-valued routing operators), complex equilibrium conditions (via possibly multi-valued operators) and also stochastic models where measurements for predicting traffic are noisy.
Within this framework, we postulate the notion of \emph{\coherent{} flows}, which are dynamic flows that comply with both the given edge loading and routing operator. As our main results we show the following:
 
\begin{enumerate}
	\item We prove existence of coherent flows under natural continuity conditions (see \Cref{thm:existence-finite-time-horizon,thm:ExtensionIfContinuous}).
	This generalizes various existence results from the literature, e.g., 
	the results of \textcite{Bayen2019} for prescriptive routing
	operators, the result of \textcite{PredictionEquilibria} for prediction based equilibria and the general existence result for full information equilibria by \textcite{CCLDynEquil}.
	Our results strictly generalize these existing results since we allow a larger class of physical flow models.
	\item For prescriptive operators, we show that coherent flows are unique, if the operators fulfil a Lipschitz condition on arbitrarily small extension intervals (see \Cref{thm:unique-existence-for-lipschitz-operators}).
	For this, we build on an idea originally proposed by \textcite{Bayen2019} and, as a by-product, correct a technical lemma used in their proof.
	We generalize their result to a larger class of physical models and routing operators that are not necessarily volume-based.
	\item Finally, we introduce a new equilibrium concept called \emph{stochastic prediction equilibrium} that generalizes the dynamic prediction equilibrium to allow for random measurement errors in the predictions (see \Cref{def:SPE}).
	We show that the corresponding routing operator is prescriptive and that a unique \revMinor{stochastic prediction equilibrium} exists, if the random distribution is well-behaved (see \Cref{cor:stochastic-prediction-equilibrium}).
\end{enumerate}

	The rest of this paper is organized as follows: In \Cref{sec:Model} we formally introduce our abstract framework and define our central object of study: \emph{Coherent flows}. Next, in \Cref{sec:ExistenceUniqueness}, we provide several results on the existence and uniqueness of coherent flows.
	In \Cref{sec:Applications} we demonstrate how these results apply to well-known physical and behavioral models.
	Finally, in \Cref{sec:SPE}, we introduce the new concept of stochastic prediction equilibria and analyze their existence and uniqueness by leveraging our framework.

\subsection{Related Work}

Substantial effort has been made to use AI-based methods for traffic predictions.
Due to the high number of relevant papers, we can only highlight a few and refer the reader to \cite{SurveyTrafficPredictionsWithAI} and \cite{GNNSurvey} for an overview on AI-based methods and their challenges in traffic prediction, and on the use of graph neural networks, respectively.
\Textcite{YYZ18} model the temporal dependency as so-called spatio-temporal graph convolutional networks.
Also, \emph{graph attention networks}, introduced by \textcite{VCCRLB18}, have been used for traffic predictions by~\textcite{ZFW020}.
\Textcite{PredictionEquilibria} discuss the integration of such traffic predictions into dynamic traffic assignment models.

Similarly, there is a vast amount of literature on integrating stochastic effects of predicting travel times into traffic assignment models.

\paragraph*{Static Models}
\textcite{Daganzo1977} introduced the concept of a (static) stochastic user equilibrium (SUE).
A SUE is defined as a static path-based flow wherein each (infinitesimal) traveller chooses a path minimizing their perceived travel time.
While the actual travel time cannot be observed by the individual travellers, the \emph{perceived} travel time is a random variable that depends on the actual travel time and that may have different outcomes for every traveller.

In \cite{Sheffi1982}, \citeauthor{Sheffi1982} formulated an equivalent minimization problem which allowed them to prove that a unique SUE exists under natural assumptions on the probability distributions and the travel time-flow relationship, and to propose an algorithm computing SUE.

\textcite{Baillon2006} formulated a concept called Markovian traffic equilibrium (MTE) that generalizes SUE and the deterministic user equilibrium concept in a common framework.
They proposed an edge-based formulation, where travellers update their route choice at every intermediate node of their journey after a new outcome of the believed random travel time is revealed.

\Textcite{CW2016} presented an asymptotic analysis of a day-to-day learning dynamic for static stochastic and deterministic user equilibria.

\paragraph*{Dynamic Models}
\revMinor{As opposed to static models, our model considers within-day dynamics over continuous time.}
Dynamic equilibrium flows have been studied extensively in the transportation science community,
see~\cite{HFYGeneralizedVickreyModel} and~\cite{Friesz19} for a survey.
Starting with the work of~\textcite{KochSkutellaDEandPoA}, the mathematics and computer science community focused (mostly) on \revMinor{Vickrey's queuing} model and investigated several research questions for different behavioral models covering equilibrium existence  (cf.~\citealt{CCLDynEquil,DynamicFlowswithAdaptiveRouteChoice,GH24}), 
the computational complexity of equilibrium computation (cf.~\citealt{KaiserThinFlowComputation,FiniteTimeCombinatorialAlgorithmForIDE}) and the price of anarchy (cf.~\citealt{CorreaCO22,PriceOfAnarchyForIDE}).

\Textcite{Han2003} introduced a variant of the SUE that incorporates time-dependent flows and travel times.
They use as underlying physical model \revMinor{Vickrey's queuing} model together with a logit-based route assignment model, and present a solution method to compute these equilibria.
This model relies on path-based inflows implying that agents cannot adapt their route mid-journey.
\Textcite{PazGuala2024} formulated an \emph{arc-based} dynamic stochastic user equilibrium inspired by \cite{Baillon2006} with a logit-based adaptive route-choice model, and compared this approach with the one in \cite{Han2003}.

Logit-based models, however, exhibit unnatural behaviour as they assume independence of travel times of overlapping paths which was first pointed out by \textcite{Schneider1973} and also discussed in \textcite{Baillon2006}.
In order to mitigate this effect, \textcite{Szeto2011} considered a C-logit model instead, which uses a commonality-factor for overlapping paths.
They proposed an algorithm to compute such equilibria in a cell transmission model. 
The motivation for these models is usually to describe the behaviour of agents having stochastic errors in their predictions of travel times.
The models above are all prescriptive in nature as they explicitly prescribe how the flow has to split over the available routes.
\begin{revisedEnv}
We, instead, propose to start with a descriptive model describing the possible choices of individual agents based on their perception of the network in the same way as it is usually done for deterministic models (\cf \eg \citealt{Friesz19,KochSkutellaDEandPoA,DynamicFlowswithAdaptiveRouteChoice}).
This not only solves the problem of the interdependence of overlapping paths but can also be used to justify under what assumption which prescriptive model accurately describes the traffic flow.
\end{revisedEnv}

\section{Model}\label{sec:Model}

\revised{
	The model we propose is a macroscopic model in which individual agents have no influence on the  resulting traffic assignment; as such the demand is modelled as a continuum of flow particles that enter the network at a given rate.
	Each infinitesimal flow particle can be thought of as a traffic participant or agent.
	We distinguish between a \emph{physical} and a \emph{behavioral} model.
	The physical model describes the physical propagation of flow within edges.
	It is assumed that once flow enters an edge, it has to traverse the edge according to the ``laws'' of the physical model.
	The behavioral model then describes the routing decisions of the agents, that happen at every intermediate node on their journey toward the destination (implying that routing decision may adapt to the evolution of the flow over time).
	Finally, our main objects of study in this work are \emph{coherent} flows, \ie flows that are consistent with both a given physical and a given behavioral  model and that fulfill flow conservation (\cf \Cref{def:coherent}).

	Before we describe this model formally, we introduce some notation:
}

\begin{notation}[Function spaces, norms and special functions]\label{not:function-spaces}
	\revised{Throughout this paper, we fix some $p\in (1, \infty)$ and let $q$ denote its conjugate defined by $\nicefrac{1}{p} + \nicefrac{1}{q} = 1$.}
	\revMinor{We denote by $\rateFcts\subset \pLocInt$ the set of locally $p$-integrable functions that vanish on $(-\infty, 0)$;
	functions from this set $\rateFcts$ are used to model rates of flow over time.}
	\revised{
	We denote by $\splitFcts$ the set of measurable functions from $\IR$ to $[0, 1]$;
	these functions model flow split ratios at nodes as functions over time.
	For any time horizon $\horizon\in\IRnnInf\coloneqq \IRnn\cup\{\infty\}$, we denote by $\rateFcts_{\horizon}$, and $\splitFcts_{\horizon}$ the subsets of these functions that vanish outside $[0, \horizon)$.}

	\revMinor{For a normed space $(X, \normFct_X)$ and some $d\in\IN$, we use the norm $\smallnorm{f}_X \coloneqq \max_{i\in [d]} \smallnorm{f_i}_{X}$ on the product space $X^d$.
	The norm of $\pInt[\tilde{p}][J]$, for $\tilde{p}\in[1,\infty)$ and measurable $J\subseteq \IR$ is denoted by $\norm{f}_{\tilde{p}}\coloneqq (\int_J \abs{f}^{\tilde{p}} \diff\leb)^{1/\tilde{p}}$, where $\leb$ denotes the Lebesgue measure.
	\revised{For $\tilde{p}=\infty$, we denote the norm by $\norm{f}_\infty \coloneqq \mathrm{ess\ sup} \abs{f} = \inf\Set{ M \geq 0 | \foraall x: \abs{f(x)} \leq M }$.
	Unless stated otherwise, the set $\rateFcts_T^d$, for $d\in\IN$ and $\horizon\in\IRnn$, is equipped with the topology induced by the $p$-norm.
	Furthermore, on the set $\splitFcts_T^d$ with $\horizon\in\IRnn$, the topology induced by the $\tilde{p}$-norm coincides with the one induced by $\normFct_1$ for all $\tilde{p}\in(1,\infty)$ due to the boundedness of the codomain (\cf \Cref{prop:p-integrable-bounded-codomain}).}}

	We say that two vectors $f$, $g$ of measurable functions on $\IR$ \emph{coincide until time $\horizon\in\IRnnInf$} if $\CharF_{[0,\horizon)} \cdot f = \CharF_{[0,\horizon)} \cdot g$ is fulfilled almost everywhere.
	Here, $\CharF_J$ is the characteristic function of~$J$, \ie $\CharF_J(\theta)\coloneqq 1$, if $\theta\in J$, and $\CharF_J(\theta)\coloneqq 0$, otherwise. Similarly, for any statement $A$, we define $\CharF_A\coloneqq 1$, if $A$ is fulfilled, and $\CharF_A\coloneqq 0$, otherwise.

	\revised{A list of symbols used in this paper can be found in \Cref{sec:list-of-symbols}.}
\end{notation}

Let $G=(V,E)$ be a directed graph and $I$ a (finite) set of commodities.
Each commodity $i\in I$ comes with a \revised{destination} node $\dest_i \in V$ and a network inflow rate $u_{v,i} \in \rateFcts$ for every node $v \in V$.
\revised{For simplicity, we assume that $\dest_i$ is reachable from every node $v\in V$\footnote{\revised{
	For all our results, this assumption can be relaxed to the condition that $u_{v,i}$ must be zero  if $\dest_i$ is not reachable from $v$, with the additional requirements that (a) routing operators (\cf \Cref{def:routing-operator}) do not send flow into deadends while dropping the requirement that the flow split has to sum up to $1$ for nodes that cannot reach $\dest_i$, and that (b) edge loading operators (\cf \Cref{def:edge-loading-operator}) do not produce positive outflow of a commodity on an edge, if no flow of that commodity has entered the edge.
}}.}

\revMinor{Further, in our model, we assume that a dynamic flow in such a network can be (fully) described in terms of inflow and outflow rates over time at each edge:}
\begin{definition}[Flow]
	A \emph{(dynamic) flow} is a vector $f\in\rateFcts^{E\times I\times\{+,-\}}$ where $f^+_{e,i}$ denotes the inflow rate (over time) and $f^-_{e,i}$ the outflow rate of commodity $i$ at edge $e$.
\end{definition}

\revised{
\begin{remark}
	Note that flows are thus assumed to be locally $p$-integrable with $1 < p < \infty$.
	Integrability plays a central role since in many physical models, the travel time depends on quantities such as flow volume on edges (the difference of inflow rate and outflow rate integrals), or other aggregate measures.
	We disallow $p\in \{1, \infty\}$ as the proof of our main existence result (\Cref{thm:existence-finite-time-horizon}) requires $\rateFcts_T$ to be a reflexive Banach space for $\horizon\in\IRnn$.
\end{remark}
}

\begin{notation}[Graphs]
	For a node $v\in V$, we denote by $\edgesFrom{v}$ and $\edgesTo{v}$ the sets of outgoing and incoming edges of~$v$, respectively.
	\revised{
	If an edge $e$ leads from node $v$ to node $w$, that is, $e\in\edgesFrom{v}\cap\edgesTo{w}$, we may write\footnote{\revised{Despite this notation, our results hold also if parallel edges exist.}} $e=vw$.
	}
\end{notation}

\subsection{Physical Flow Model}

\revised{
The physical (flow) model describes the flow propagation within edges.
In particular, it determines the outflow rate $f_{e,i}^-(\theta)$ at each edge $e$ for all times $\theta$ given the inflow rate functions $f_{e,i}^+$ over time (both at past and future times) and for all edges in the network.
We abstract away the physical details of the propagation within the edge to this mapping, which we term edge loading operator.
}

\begin{definition}\label{def:edge-loading-operator}
	\revMinor{An \emph{edge loading operator} is a function}
	\[\EdgeLoading: \rateFcts^{E \times I} \to \rateFcts^{E\times I}\]
	mapping vectors $f^+ = (f^+_{e,i})_{e,i}$ of edge inflow rates to vectors $f^- = (f^-_{e,i})_{e,i}$ of edge outflow rates.
\end{definition}

\begin{definition}\label{def:ConsistentWithPhysicalModel}
	A flow $f$ is \emph{consistent with the \revised{edge loading operator}~$\EdgeLoading$ until time $\horizon\in\IRnnInf$} if it satisfies
	$f^-_{e,i}(\theta) = \EdgeLoading(f^+)_{e,i}(\theta)$
	for all $e\in E, i\in I$ and almost all $\theta<\horizon$.
\end{definition}

The above definition determines how particles flow through the edges of the network.
Flow-conservation at nodes prescribes that particles entering a node must leave the node again without waiting.

\begin{definition}\label{def:FlowConservation}
	A flow $f$ \emph{fulfils flow conservation until time $\horizon\in\IRnnInf$} if 
	\[
		\sum_{e\in\edgesFrom{v}} f^+_{e,i}(\theta) = \CharF_{v\neq \dest_i} \cdot \nodeInflow{f}{v}{i}(\theta)
\]
	holds for all $i \in I$, $v \in V$ and almost all $\theta<\horizon$ where
	\optDisplay{
		\nodeInflow{f}{v}{i}(\theta) \coloneqq u_{v,i}(\theta)+\sum_{e \in \edgesTo{v}}f^-_{e,i}(\theta)
	}
	denotes the inflow rate of commodity $i$ into node $v$.
\end{definition}

Important properties of a physical flow model include local boundedness \revMinor{and} causality.
\revised{Local boundedness means that on finite intervals, the range of possible outflow rates is norm-bounded:}

\begin{definition}\label{def:locally-bounded}
	We call an edge loading operator $\EdgeLoading$ \emph{locally bounded} if for any finite $\horizon\in\IRnn$ there exists some $\bound'_\horizon\in\IRnn$ such that $\smallnorm{\CharF_{[0, \horizon]}\cdot \EdgeLoading(f^+)}_{\revMinor{p}} \leq \bound'_{\horizon}$ holds for all flows $f$.
\end{definition}

\revised{Causality describes the property that an edge loading operator cannot look into the future; it may only depend on the inflow rates at past times possibly up to the present:}

\begin{definition}\label{def:causal-edge-loading}
	An edge loading operator $\EdgeLoading$ is called \emph{causal} if whenever two edge inflow vectors $f^+$ and $g^+$ coincide until some time $\horizon$, then there exists some $\alpha\in\IRnn$ such that $\EdgeLoading(f^+)$ and  $\EdgeLoading(g^+)$ coincide until $\horizon+\alpha$.
	If $\alpha$ \revMinor{can} always be chosen strictly positively, we call $\EdgeLoading$ \emph{strictly causal}.
	If $\alpha$ \revMinor{can} be chosen strictly positively and independently of $g^+$ (but, potentially, still dependent on~$\horizon$ \revMinor{and $f^+$}), we call $\EdgeLoading$ \emph{uniformly strictly causal}.
\end{definition}

Many well-known physical models \revMinor{admit an edge loading operator in our sense}, in particular, the ones resulting from the following two widely used edge-dynamics:
Vickrey's queuing model \cite{VickreyQueuingModel, CCLDynEquil} with non-negative free-flow travel times and positive edge capacities, and the affine-linear volume-delay dynamics (see \eg \citealt{ZhuM00,Bayen2019}) with positive free-flow travel times and positive volume-coefficients.
Both models are causal and locally bounded, and, if all free-flow travel times are positive, even uniformly strictly causal.
We refer to \ifnoappendix\Cref{EXT:sec:physical-model-examples} in the supplementary material\else\Cref{sec:physical-model-examples}\fi{} for a formal introduction and discussion of these models.

Since we often work with flows restricted to a finite time horizon, we introduce a specific notation for the corresponding edge loading operator.

\begin{notation}\label{not:restrictedEdgeLoading}
	For a finite $\horizon\in\IRnn$ and an edge loading operator $\EdgeLoading$, let $\EdgeLoading_\horizon$ denote the mapping \[
	\EdgeLoading_\horizon: \rateFcts_T^{E\times I} \to \rateFcts_\horizon^{E\times I},
	\quad
	f^+ \mapsto \CharF[[0,\horizon]]\cdot\EdgeLoading(f^+),
	\]
	where $\rateFcts_\horizon$ is the set of functions in $\rateFcts$ vanishing on $\IR\setminus [0, \horizon]$.
	Note that $\rateFcts_\horizon\subseteq \pInt$. 
\end{notation}

\subsection{Behavioural model}

Based on these physical constraints, the \emph{behavioural model} describes how \revMinor{flow is} routed \revMinor{through the network}.
Whenever \revMinor{an infinitesimal agent} arrive\revMinor{s} at a node $v\neq \dest_i$, they have to decide which edge to enter next based on the past, current, and potentially future state of the network.
\revised{
The collective decisions of all these agents of a commodity $i$ arriving in $v$ at time $\theta$ result in a \emph{flow split} $(r_{e,i}(\theta))_{e\in\edgesFrom{v}}$ that describes for each outgoing edge of~$v$ the proportion of flow entering this edge.
As travel times influence the routing decisions, the actualized flow splits depend on the flow $f$ itself.
Further, since for a given flow $f$, the agents may have multiple options of equal quality, there might be a multitude of different decision outcomes.
Thus,} we introduce the concept of a set-valued routing operator:

\revised{
Recall, that $\splitFcts$ denotes the set of measurable functions from $\IR$ to $[0,1]$.
}

\begin{definition}\label{def:routing-operator}
    Let $\ROp$ be a set-valued mapping of the form
	\[
		\ROp: \rateFcts^{E \times I \times \set{+,-}} \rightrightarrows \revised{\splitFcts^{E\times I}}
	\]
    that maps flows $f$ to sets~$\ROp(f)$ of allowed flow splits at nodes.
	We say $\ROp$ is a \emph{routing operator} if all such flow splits~$r \in \ROp(f)$ satisfy $\sum_{e \in \edgesFrom{v}}r_{e,i}= \CharF_{v\neq \dest_i}$ for all nodes $v \in V$ and commodities $i \in I$.
\end{definition}

In other words, given a flow $f$, an element $r$ of $\ROp(f)$ describes for every time $\theta$ and node $v\neq \dest_i$ how the incoming flow rate into node~$v$ splits over the outgoing edges{} of $v$.
This flow-split originates from the particles' decisions that may depend on the flow $f$ and its induced (past and/or future) travel times, or might be enforced by some infrastructure operator.

Naturally, we want to study flows that are consistent with a given routing operator:

\begin{definition}\label{def:ConsistentWithBehaviouralModel}
	A flow $f$ is \emph{consistent with a routing operator~$\ROp$ until time $\horizon \in \IRnnInf$} if there exists some $r\in\ROp(f)$ such that 
	\[
		f^+_{e,i}(\theta) = r_{e,i}(\theta)\cdot \sum_{e'\in\edgesFrom{v}} f^+_{e',i}(\theta) 
	\]
	holds for all $i \in I$, $e=vw \in E$ and almost all $\theta<\horizon$.\end{definition}

As for the physical model, we first define \revMinor{two} key characteristics of \revMinor{routing operators}.
\revised{First, we introduce causality of routing operators, which (as for the causality of edge loading operators in \Cref{def:causal-edge-loading}) means that the operator may only depend on the past evolution of the flow.}

\begin{notation}\label{not:restrictedRoutingOp}
	For a routing operator $\ROp$, we define \optDisplay{
		\CharF_{[0,\horizon]}\cdot \ROp(f) \coloneqq \set{ \CharF_{[0,\horizon]}\cdot r | r\in\ROp(f) }
	}
	for all $f\in\flowSet$ and $\horizon\in\IRnnInf$,
	and we \revMinor{use} $\ROp_\horizon$ \revMinor{to denote} the mapping \[
		\ROp_\horizon: \rateFcts_\horizon^{E\times I\times \{ +, - \}} \rightrightarrows \revised{\splitFcts_\horizon}^{E\times I},
		\quad
		f \mapsto \CharF_{[0,\horizon]}\cdot \ROp(f).
	\]
\end{notation}

\begin{definition}\label{def:causal-routing-operator}
	A routing operator $\ROp$ is called \emph{causal} if whenever two flows $f$ and $g$ coincide until some time $\horizon$, then there exists some $\alpha\in\IRnn$ with $\CharF_{[0,\horizon + \alpha]} \cdot \ROp(f) =  \CharF_{[0,\horizon + \alpha]} \cdot \ROp(g)$.
	If $\alpha$ \revMinor{can} always be chosen strictly positively, we call the operator \emph{strictly causal}.
	If $\alpha$ \revMinor{can} be chosen strictly positively and independently of $g$, we call it \emph{uniformly strictly causal}.
\end{definition}

\citet*{Keimer2020} study predefined routing policies defining the ``laws of routing''.
Adopting this perspective means that particles arriving at intersections are routed according to prescriptive rules.
These rules are modelled as the class of \emph{prescriptive} routing operators\footnote{Note that \citet{Bayen2019} originally introduced the term ``routing operator'' only for what we now call a prescriptive routing operator.}:

\begin{definition}\label{def:prescriptive}
	We call a routing operator $\ROp$ \emph{prescriptive} if for every flow $f$ the set $\ROp(f)$ consists of exactly one element.
	In this case, we denote this unique element by $\ROe(f)$.
\end{definition}

We are interested in flows that are consistent with both a given \revMinor{edge loading operator  $\EdgeLoading$ and a given routing operator $\ROp$}.

\begin{definition}\label{def:coherent}
	Let $\EdgeLoading$ be an edge loading operator and $\ROp$ a routing operator.
	A flow $f$ is called \emph{$\EdgeLoading$-$\ROp$-\coherent} until time $\horizon\in\IRnnInf$ if $f$ is consistent with $\EdgeLoading$ and $\ROp$ until time $\horizon$ and fulfils flow conservation until time $\horizon$.
	If $\EdgeLoading$ and $\ROp$ are clear from the context, we say $f$ is \emph{\coherent} until $\horizon$.{}
\end{definition}

\section{Existence and Uniqueness}\label{sec:ExistenceUniqueness}

In this section we provide several sufficient properties of edge loading and routing operators to guarantee existence and -- in the second part -- uniqueness of coherent flows.

\subsection{Existence}

We start this subsection by showing that for finite time horizons, coherent flows exist given that $\EdgeLoading$ and $\ROp$ fulfil a continuity condition.
We then extend this result to the infinite time horizon if either the models ensure that all relevant flows terminate in \revMinor{finite} time or if both operators are causal.
For the latter case, we show that the time horizon of a coherent flow may always be strictly extended if the operators are continuous on a (small enough) extension interval, and we conclude existence until $\infty$ using Zorn's Lemma.

\begin{revisedEnv}
Existence of coherent flows is by no means trivial due to the self-referential nature of the edge loading and routing operators, coupled by the flow conservation constraint:
Given edge inflow rates, the edge loading operator determines edge outflow rates and thus, the set of flow splits allowed by the routing operator.
The edge outflow rates in turn determine the effective node inflow rate, and, together with the allowed flow splits, yield a new set of possible inflow rates.
A further complication arises if the operators are non-causal, that is, they use future information about the flow (e.g., when agents have perfect foresight):
solving for a coherent flow in a forward-in-time fashion is then typically impossible.

Hence, we apply a fixed point argument for our general existence theorem.
As a premise, some regularity conditions on both the edge loading and the routing operator are necessary.
For the latter, we require that the mapping of tuples of flows and effective node inflow rates to products of corresponding allowed routings and the node inflows is a closed operator:
\end{revisedEnv}

The \emph{graph} of a set-valued function $\Gamma: M \rightrightarrows N$ is defined as
\[
	\graph(\Gamma)\coloneqq \Set{ (x,y) \in M\times N | y\in\Gamma(x) }.
\]

\newcommand{\usg}[1][]{\ifthenelse{\equal{#1}{}}{\tilde g}{\tilde g^{(#1)}}}
\begin{revisedEnv}
	\begin{definition}\label{def:regular-routing-operator}
		For a routing operator $\ROp$ and $\horizon\in\IRnn$, we say that $\ROp_T$ is \emph{regular} if the graph of
		\begin{equation}\label{eq:regularity}
			\begin{aligned}
			&\tilde{\Gamma} : \rateFcts_T^{(E\times I)^2 + V \times I} \rightrightarrows \rateFcts_T^{E\times I}, \\
			&(f, \usg) \mapsto \Set{ g^+ \in \rateFcts_T^{E\times I} | \exists r\in \ROp_T(f):~\forall e=vw\in E: g^+_{e,i} = r_{e,i}\cdot \usg_{v,i} }
			\end{aligned}
		\end{equation}
		is sequentially weakly\footnote{
			\revMinor{For all $p\in [1,\infty)$ with conjugate $q\in (1,\infty]$ satisfying  $\nicefrac{1}{p} + \nicefrac{1}{q} = 1$,} a sequence $(f^{(\revMinor{n})})_\revMinor{n}$ \emph{converges weakly} to some $f$ in $\pInt[p]$ iff \revised{$\int_{\IR} f^{(n)} \cdot g \diff\leb$ converges to $\int_{\IR} f \cdot g \diff\leb$} for all $g\in \pInt[q]$.
		} closed \wrt $\normFct_p$.
	\end{definition}

	Many natural routing operators (especially those representing the result of individual behaviour) are decoupled between nodes, times, and commodities,
	in the sense that given a flow, the choice of routing at individual nodes and times and of individual commodities is independent from each other.
	Note that, the sets of allowed routings
	(at each node and of each commodity) can nevertheless depend on the flow in the whole network!
	For such separable routing operators an easier to verify sufficient condition for being regular is that for any fixed flow the sets of allowed routings at any node and for any commodity is convex and this set depends (weak-strong) continuously on the flow.
	This is formalized in the following definition:

	\newcommand{\family}{\mathcal{K}}\begin{definition}\label{def:decomposable}
		Let $\ROp$ be a routing operator and $\horizon\in\IRnn$.
		We call $\ROp_\horizon$ \emph{(regularly) decomposable},
		if for every node $v\in V$ and commodity $i\in I$ there exists an (arbitrary) index set $\family_{v,i}$
		and a family $(H_k, b_k)_{k\in \family_{v,i}}$ of functions
		$H_k: \flowSet_\horizon \to L^q([0,\horizon])^{\edgesFrom{v}}$, $b_k: \flowSet_\horizon \to L^q([0,\horizon])$ with $\nicefrac{1}{p} + \nicefrac{1}{q} = 1$ such that
		\begin{equation}\label{eq:decomposability}
			\ROp_T(f) = \Set{
				r \in \splitFcts_T^{E\times I}
				|
				\forall v\in V, i\in I, k \in\family_{v,i}: \sum_{e\in\edgesFrom{v}} H_k(f)_e \cdot r_{e,i} \leq b_k(f)
			}
		\end{equation}
		and the functions $H_k$ are sequentially weak-strong continuous\footnote{\revised{For normed spaces $X$ and $Y$,} a function \revised{$f: M \to Y$, $M\subseteq X$}, is \emph{sequentially weak-weak} or \emph{sequentially weak-strong continuous \revised{\wrt $\normFct_X$ and $\normFct_Y$}} iff \revMinor{$f$} maps weakly convergent sequences \revised{in $(M,\normFct_X)$} to weakly or strongly convergent sequences \revised{in $(Y, \normFct_Y)$}, respectively.
			\revised{If $Y=\IR$, we always use the term \emph{sequentially weak-strong continuous}, since the weak and strong topologies of $\IR$ coincide.}
		} \wrt $\normFct_p$ and $\normFct_q$, and each function $b_k$ fulfils at least one of the following two conditions:
		\begin{enumerate}[label=(\roman*)]
			\item $b_k$ is sequentially weak-strong continuous \wrt $\normFct_p$ and $\normFct_q$, or
			\item\label{decomposable:usc} $b_k$ is pointwise sequentially weak-strong upper-semicontinuous, \ie for all sequences $(f^{(n)})_n$ weakly converging to $f$ in $\flowSet$, we have 
			\[
				\foraall \theta: \limsup_{n\to\infty} b_k(f^{(n)})(\theta) \leq b_k(f)(\theta),
			\]
			and $b_k$ is sequentially dominated, \ie for all sequences $(f^{(n)})_n$ converging weakly to $f$, there exists some $B\in\pInt[q][[0, \horizon]]$ such that $\abs{b_k(f^{(n)})(\theta)} \leq B(\theta)$ holds for all $n\in\IN$ and almost all $\theta\in[0,\horizon]$.
		\end{enumerate}
	\end{definition}

	Note how in this definition, each constraint $(H_k,b_k)$ in \eqref{eq:decomposability} defines a flow-dependent half-space on the set of flow splits of the outgoing edges of a node--commodity pair for every point in time.
	Thus, for example, using finitely many such constraints yields a flow- and time-dependent polyhedron (in fact, a polytope considering the additional implicit constraint $r_{e,i}\in\splitFcts_T$) in standard form lying in $\IR^{E\times I}$ where the constraints are decoupled both between nodes and between commodities.

	\begin{lemma}\label{lem:decomposable-implies-regular}
		If $\ROp_\horizon$ is decomposable and has non-empty values\footnote{\revised{We say a set-valued function $\Gamma: M \rightrightarrows N$ \emph{has non-empty} or \emph{convex values} if the set $\Gamma(x)$ is non-empty or convex, respectively, for all $x\in M$.
		}}, then $\ROp_\horizon$ is regular and has convex values.
	\end{lemma}
	\begin{proof}
		We first observe that convexity of $\ROp_\horizon(f)$ for all $f\in\flowSet_\horizon$ is clear, since the constraints in~\eqref{eq:decomposability} are pointwise linear in~$r$ and, thus, convex combinations fulfil the constraints, too.
		It remains to show that $\ROp_T$ is regular.

		Let $(f^{(n)}, \usg^{(n)}, g^{(n),+})$ be a sequence in the graph of the set-valued function $\tilde{\Gamma}$ in \eqref{eq:regularity},
		and assume this sequence converges weakly to $(f, \usg, g^+)$ in $\rateFcts_T^{(E\times I \times \{ +,- \}) + (V\times I) + (E \times I)}$.
		Then, for every $n$, there exists $r^{(n)}\in \ROp_\horizon(f^{(n)})$ with $g_{e,i}^{(n),+} = r^{(n)}_{e,i}\cdot \usg^{(n)}_{v,i}$ for all $e=vw\in E$ and $i \in I$.
		We now need to show that there exists some $r\in\ROp_\horizon(f)$ such that $g_{e,i}^+ = r_{e,i} \cdot \usg_{v,i}$ holds for all $e=vw\in E$ and $i \in I$.

		\begin{claim}\label{claim:DecomposabilityBoundary}
			For every $v\in V, i\in I$, and constraint in \eqref{eq:decomposability} indexed by $k\in \family_{v,i}$, we have
			\[
				\sum_{e\in\edgesFrom{v}} H_{k}(f)_e \cdot g^+_{e,i} \leq b_k(f)\cdot \usg_{v,i}.
			\]
		\end{claim}
		\begin{proofClaim}
			Note first that, for each $n\in\IN$, multiplying $\usg[n]_{v,i}$ to the constraint in \eqref{eq:decomposability} yields almost everywhere on $[0,\horizon]$ that
			\begin{align}\label{eq:DecomposabilityLemmaProof}
				\sum_{e\in\edgesFrom{v}} H_k(f^{(n)})_e \cdot g_{e,i}^{(n),+} \leq b_k(f^{(n)})\cdot \usg[n]_{v,i}.
			\end{align}
			\begin{description}
				\item[Case (i):] $b_k$ is sequentially weak-strong continuous.
				Since all $H_k(\cdot)_{e}$ and $b_k$ are weak-strong continuous to $L^q([0,\horizon])$, 
				the products $H_k(f^{(n)})_e\cdot g_{e,i}^{(n),+}$ and $b_k(f^{(n)}) \cdot \usg[n]_{e,i}$ converge weakly in $L^1([0, \horizon])$ to $H_k(f)_e\cdot g^+_{e,i}$ and $b_k(f)\cdot \usg_{e,i}$, respectively (see \Cref{prop:product-of-weak-and-strong}).
				Hence, the inequality above holds also for the limits almost everywhere on $[0, \horizon]$ (\cf \Cref{prop:inequalities-weakly-continuous}).

				\item[Case (ii):] $b_k$ is pointwise sequentially weak-strong upper-semicontinuous and sequentially dominated.
				Let $B$ be the dominating element in $\pInt[q][[0, \horizon]]$ for the sequence $(b_k(f^{(n)}))_n$.
				We use the shorthand $b^*\coloneqq \limsup_{m\to\infty} b_k(f^{(m)})$.
				Then, for all $n\in\IN$ we have
					\[\smallabs{\sup_{m\geq n} b_k(f^{(m)})(\theta) - b^*(\theta)}^q \leq (2\cdot B(\theta))^q \quad\text{for almost all } \theta\in[0,\horizon].\]
				By Lebesgue's dominated convergence theorem, the limit of $\int_{[0, \horizon]} \smallabs{\sup_{m\geq n} b_k(f^{(m)}) - b^*}$ is~$0$.
				In particular, $\sup_{m\geq n} b_k(f^{(m)})$ is in $\pInt[q][[0, \horizon]]$ and converges strongly to $b^*$.

				This strong convergence implies that $(\sup_{m\geq n} b_k(f^{(m)}))\cdot \usg[n]_{v,i}$ converges weakly in $L^1([0,\horizon])$ to $b^* \cdot \usg_{v,i}$ (see \Cref{prop:product-of-weak-and-strong}).
				Since $\usg[n]_{v,i}$ is non-negative, we get from~\eqref{eq:DecomposabilityLemmaProof} that the inequalities 
				\[
					\sum_{e\in\edgesFrom{v}} H_k(f^{(n)})_e \cdot g^{(n),+}_{e,i} \leq (\sup_{m\geq n} b_k(f^{(m)}))\cdot \usg[n]_{v,i}
				\]
				hold almost everywhere.
				Since both sides converge weakly in $L^1([0, \horizon])$, this inequality also applies to the limits of both sides almost everywhere (\cf \Cref{prop:inequalities-weakly-continuous}).
				The inequality stays valid after replacing $b^*$ on the right-hand side with $b_k(f)$ due to the pointwise upper semi-continuity of $b_k$.
			\end{description}
		\end{proofClaim}
		
		Since we assumed that $\ROp_T$ has non-empty values, let $\underline{r}\in\ROp_T(f)$ be an arbitrary ``fallback'' routing.
		Let us now define $r$ as the function
		\[
			r_{e,i}(\theta) \coloneqq \begin{cases}
				\frac{g^+_{e,i}(\theta)}{\usg_{v,i}(\theta)}, &\text{if $\usg_{v,i}(\theta) > 0$,}\\
				{\underline{r}}_{e,i}(\theta), &\text{otherwise.}
			\end{cases}
		\]
		This function fulfills all constraints for the set $\ROp_T(f)$ in~\eqref{eq:decomposability}:
		It is measurable, non-negative and has its support in $[0,\horizon]$ since it is build from functions which have the same properties.
		Moreover, it is bounded by~$1$ since $g^{(n),+}_{e,i} = r_{e,i}^{(n)} \cdot \usg[n]_{v,i} \leq \usg[n]_{v,i}$ holds for all $n$, and thus also for the weak limits on both sides (\cf \Cref{prop:inequalities-weakly-continuous}).
		Hence, we have $r \in \splitFcts_T^{E\times I}$.
		Finally, $r$ also satisfies the defining inequalities given in~\eqref{eq:decomposability} and, therefore, $r \in \ROp_T(f)$:
		For times $\theta$ with $\usg_{v,i}(\theta) > 0$, this follows from \Cref{claim:DecomposabilityBoundary}, and otherwise it follows from $\underline{r}\in\ROp_T(f)$.
	\end{proof}

	We are now ready to formulate our central existence result:

\end{revisedEnv}

\begin{theorem}\label{thm:existence-finite-time-horizon}
	Let $\horizon\in\IRnn$ and let $\EdgeLoading$ be a locally bounded edge-loading operator such that $\EdgeLoading_\horizon$ is sequentially weak-weak continuous \revised{\wrt $\normFct_p$} and let $\ROp$ be a routing operator such that $\ROp_\horizon$ \revised{is regular\footnote{
		In the first version of this paper, we claimed a stronger version of this theorem by requiring that the routing operator $\ROp_\horizon$ has a sequentially weakly closed graph w.r.t. $\normFct_p$ (instead of requiring regularity as in \Cref{def:regular-routing-operator}).
		However, as pointed out by an anonymous reviewer, the theorem's proof rested 
		on the (incorrect) claim that the product of two weakly convergent 
		sequences is again weakly convergent. Consequently, in this revision, we replaced the requirement accordingly and applied analogous adjustments to \Cref{thm:ExtensionIfContinuous} and \Cref{cor:prescriptive-operators}.
	}{} and} has non-empty and convex values.
	
	{} Then, there exists a flow $f\in\flowSet_{\horizon}$ which is \coherent{} until time $\horizon$.
\end{theorem}

\newcommand{\kakutaniFanGlicksbergThm}{
	\begin{theorem}[{Kakutani-Fan-Glicksberg Fixed-Point Theorem}]\label{thm:KFGFixedPointThm}
		Let $X$ be a locally convex Hausdorff space, $K \subseteq X$ non-empty, convex and compact.
		Moreover, let $\Gamma: K \rightrightarrows K$ be a set-valued function with closed graph and non-empty, convex \revMinor{values}.
		
		Then the set of fixed points of $\Gamma$ is non-empty and compact, where $x\in K$ is called a \emph{fixed point of~$\Gamma$} if $x\in\Gamma(x)$.
	\end{theorem}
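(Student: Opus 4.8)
The plan is to derive the theorem from Brouwer's fixed-point theorem via a finite-dimensional approximation of $\Gamma$, following the classical argument of Fan and Glicksberg. I would first record two consequences of the hypotheses. Since $K$ is compact and $\graph(\Gamma)$ is closed in $K\times K$, the correspondence $\Gamma$ is upper semicontinuous: were it not at some $x_0$, there would be an open $V\supseteq\Gamma(x_0)$ and a net $x_\alpha\to x_0$ with $y_\alpha\in\Gamma(x_\alpha)\setminus V$; by compactness of $K$ a subnet satisfies $y_\alpha\to y_0\notin V$, and closedness of the graph forces $y_0\in\Gamma(x_0)\subseteq V$, a contradiction. Moreover each value $\Gamma(x)$ is a closed subset of the compact set $K$, hence compact, and is nonempty and convex by assumption. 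I would also fix once and for all a neighbourhood base $\mathcal U$ of $0$ in $X$ consisting of open, convex, symmetric sets, available because $X$ is locally convex, and direct $\mathcal U$ by reverse inclusion.

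The core step is, for each $U\in\mathcal U$, to produce a point $\bar x_U\in K$ represented as a convex combination $\bar x_U=\sum_k p_k(\bar x_U)\,y_{x_k}$ of points $y_{x_k}\in\Gamma(x_k)$ with $x_k\in\bar x_U+U$ for every index $k$ carrying positive weight. To this end, upper semicontinuity lets me choose, for every $x\in K$, an open neighbourhood $W_x\subseteq(x+U)\cap K$ of $x$ with $\Gamma(W_x)\subseteq\Gamma(x)+U$, together with some $y_x\in\Gamma(x)$; I extract a finite subcover $W_{x_1},\dots,W_{x_m}$ of the compact set $K$, take a continuous partition of unity $p_1,\dots,p_m$ subordinate to it (possible since $K$ is compact Hausdorff, hence normal), and set $f(z):=\sum_{k}p_k(z)\,y_{x_k}$. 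Then $f$ maps all of $K$ into the polytope $S:=\mathrm{conv}\{y_{x_1},\dots,y_{x_m}\}\subseteq K$, a compact convex subset of a finite-dimensional subspace, so Brouwer's fixed-point theorem (in the version for compact convex subsets of $\IR^m$) yields $\bar x_U\in S$ with $\bar x_U=f(\bar x_U)$; and whenever $p_k(\bar x_U)>0$ we have $\bar x_U\in W_{x_k}\subseteq x_k+U$, i.e.\ $x_k\in\bar x_U+U$, which is exactly the data claimed.

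Finally I would pass to the limit along the net $(\bar x_U)_{U\in\mathcal U}$. By compactness of $K$ some subnet has $\bar x_U\to\bar x\in K$; since the points $x_k$ occurring in the representation of $\bar x_U$ lie in $\bar x_U+U$ with $U$ shrinking to $\{0\}$, they converge to $\bar x$ as well. Fix $V\in\mathcal U$. By upper semicontinuity at $\bar x$ there is $V_0\in\mathcal U$ with $\Gamma(\bar x+V_0)\subseteq\Gamma(\bar x)+V$, and for $U$ far enough along the subnet one has $\bar x_U+U\subseteq\bar x+V_0$, hence every $y_{x_k}$ occurring with positive weight satisfies $y_{x_k}\in\Gamma(x_k)\subseteq\Gamma(\bar x)+V$; therefore $\bar x_U\in\mathrm{conv}(\Gamma(\bar x)+V)=\Gamma(\bar x)+V$, using convexity of $\Gamma(\bar x)$ and of $V$. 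Letting $U$ run along the subnet gives $\bar x\in\overline{\Gamma(\bar x)+V}\subseteq\Gamma(\bar x)+2V$. Since $V\in\mathcal U$ was arbitrary and $\Gamma(\bar x)$ is closed, $\bar x\in\bigcap_{V\in\mathcal U}(\Gamma(\bar x)+2V)=\Gamma(\bar x)$, so the fixed-point set is nonempty. Its compactness is immediate: $\mathrm{Fix}(\Gamma)=\{x\in K:(x,x)\in\graph(\Gamma)\}$ is the preimage of the closed set $\graph(\Gamma)$ under the continuous diagonal embedding $x\mapsto(x,x)$, hence closed in the compact set $K$.

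The step I expect to be the main obstacle is this approximation-and-limit argument. Upper semicontinuity does \emph{not} say that $f(z)$ is close to $\Gamma(z)$, only that $f(z)$ is a convex combination of values of $\Gamma$ at points near $z$; the argument closes only by combining this with upper semicontinuity at the limit point $\bar x$ and with convexity of $\Gamma(\bar x)$ (so that a convex combination of points in $\Gamma(\bar x)+V$ again lies in $\Gamma(\bar x)+V$). The supporting topological-vector-space facts --- that $\mathrm{conv}(A+V)=\mathrm{conv}(A)+V$ for convex $V$, that $\overline S\subseteq S+V$ for every neighbourhood $V$ of $0$, and that $\bigcap_{V}(C+V)=C$ for closed $C$ --- are routine but must be applied with symmetric convex neighbourhoods so that no constants are lost.
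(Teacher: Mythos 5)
The paper does not prove this statement at all: it is a classical result imported verbatim from the literature (cf.\ \citealt[Corollary~17.55]{InfiniteDimensionalAnalysis}) and used as a black box in the proof of \Cref{thm:existence-finite-time-horizon}. Your proposal, by contrast, is a self-contained proof, and it is essentially the standard Fan--Glicksberg argument: closed graph plus compactness of $K$ gives upper semicontinuity and compact values; for each symmetric convex neighbourhood $U$ of $0$ one builds a continuous single-valued approximation $f=\sum_k p_k\, y_{x_k}$ via a finite subcover and a partition of unity, applies Brouwer's theorem on the finite-dimensional polytope $\convexhull\{y_{x_1},\dots,y_{x_m}\}$ (legitimate because finite-dimensional subspaces of a Hausdorff TVS are linearly homeomorphic to Euclidean space), and then passes to a convergent subnet of the approximate fixed points, using upper semicontinuity at the limit together with convexity of $\Gamma(\bar x)$ and of $V$ to conclude $\bar x\in\bigcap_V(\Gamma(\bar x)+2V)=\Gamma(\bar x)$; compactness of the fixed-point set follows from closedness of $\graph(\Gamma)$ under the diagonal embedding. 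I checked the steps and found no gap; the only place worth spelling out slightly more is the claim that eventually $\bar x_U+U\subseteq\bar x+V_0$ along the subnet, which needs a $V_1\in\mathcal U$ with $V_1+V_1\subseteq V_0$ together with the fact that the subnet is eventually below any prescribed $V_1$ in the reverse-inclusion order and eventually inside $\bar x+V_1$ --- routine, but it is where the net bookkeeping actually matters. In short: correct, but it buys generality the paper does not need (the paper only invokes the theorem in a metrizable weak topology on a bounded set, where sequences would suffice), at the cost of reproving a textbook result the authors deliberately cite.
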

}

	\revMinor{For the proof,} we rely on the Kakutani-Fan-Glicksberg Fixed-Point Theorem (\cf \citealt[Corollary 17.55]{InfiniteDimensionalAnalysis}).

	\kakutaniFanGlicksbergThm
\newcommand{\proofThmExistenceFiniteTimeHorizon}[1][Proof]{
\begin{proof}[#1]
	We define the constant $B_\horizon\coloneqq \max_v\abs{\edgesTo{v}} \cdot B_\horizon' + \norm{\CharF_{[0,\horizon]} \cdot u}_{\revMinor{p}}$ where $B_\horizon'$ is given by the local boundedness of $\Phi$.
	We then define the following set $K$ of candidates for consistent flows
	\revised{
	\begin{align*}
		K \coloneqq \Set{(f^+, f^-) \in \flowSet_\horizon  | \smallnorm{(f^+, f^-)}_{\revMinor{p}} \leq \bound_\horizon }
	\end{align*}
	}
	and the correspondence
	\revised{
	\[
		\Gamma: K \rightrightarrows K, f \mapsto \Set{
			g \in K
			|
			\begin{array}{c}
				g^- = \EdgeLoading_\horizon(f^+) \quad\text{ and }\quad \exists r\in\ROp_\horizon(f): \\
				\forall i \in I, e=vw \in E: g^+_{e,i} = r_{e,i}\cdot \Bigl(u_{v,i}+\sum_{e\revMinor{'} \in \edgesTo{v}}g^-_{e\revMinor{'},i}\Bigr)
			\end{array}
		}.
	\]
	Note that this mapping is well-defined by our choice of $B_T$.}
	\revised{T}he fixed points of $\Gamma$ are \revised{then} exactly the coherent flows until time~$\horizon$.
	\revised{This is, because for any such fixed point $f \in \Gamma(f)$ the first equation in the definition of the set~$\Gamma(f)$ is exactly the definition of $f$ being consistent with~$\EdgeLoading$ until~$\horizon$ (i.e., \Cref{def:ConsistentWithPhysicalModel}) while the second equation is flow conservation at nodes together with consistency with the given routing operator until~$\horizon$ (i.e., \Cref{def:FlowConservation,def:ConsistentWithBehaviouralModel}).}
	
	We now want to show that $\Gamma$ has such a fixed point by applying \Cref{thm:KFGFixedPointThm} to $K\subseteq L^p(\IR)^{\revised{E\times I \times \{+,-\}}}$ equipped with the weak topology.
	Hence, we have to show that the requirements of this \namecref{thm:KFGFixedPointThm} are satisfied:

	\begin{claim}\label{claim:PropertiesOfK}
		$K$ is non-empty, convex, weakly closed and weakly compact. 
	\end{claim}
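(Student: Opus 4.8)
The plan is to verify the four stated properties of $K$ in turn, working inside the product space of $\pInt$-functions in which $K$ is defined, equipped with its weak topology — a locally convex Hausdorff space, as \Cref{thm:KFGFixedPointThm} will require. Non-emptiness is immediate: the all-zero vector is non-negative, vanishes outside $[0,\horizon]$, satisfies $r\le 1$, and has norm $0\le\bound_\horizon$, hence it lies in $K$. For convexity I would exhibit $K$ as a finite intersection of convex sets — the closed norm ball $\set{x:\smallnorm{x}\le\bound_\horizon}$ is convex because $\smallnorm{\emptyarg}$ is a norm, and each remaining defining condition (componentwise non-negativity, the bound $r_{e,i}\le 1$, and the requirement of vanishing a.e.\ on $\IR\setminus[0,\horizon]$) is an intersection of closed half-spaces and hence convex.

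Next I would show that $K$ is norm-closed and then obtain weak closedness from Mazur's theorem, which says that a convex, norm-closed subset of a normed space is weakly closed. Norm-closedness is verified constraint by constraint: the norm ball is closed, and for each pointwise condition ($f^{\pm}_{e,i}\ge 0$, $0\le r_{e,i}\le 1$, and the vanishing outside $[0,\horizon]$) one uses the standard fact that an $\pInt$-convergent sequence has a subsequence converging pointwise almost everywhere, so a condition satisfied by every term of the sequence is inherited by the limit. A finite intersection of closed sets is closed, so $K$ is norm-closed; being also convex, it is weakly closed.

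For (weak) compactness I would invoke reflexivity. Since $1<p<\infty$ by the standing assumption, $\pInt$ is reflexive, and a finite product of reflexive Banach spaces is reflexive, so the ambient space of $K$ is reflexive. By Kakutani's theorem its closed ball $\set{x:\smallnorm{x}\le\bound_\horizon}$ is weakly compact, and $K$ — a weakly closed subset of this weakly compact ball by the previous step — is itself weakly compact. This settles all four assertions.

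The argument is routine; the one point that needs care is that the pointwise constraints genuinely define norm-closed (hence, via convexity, weakly closed) sets, which is exactly where the subsequential a.e.-convergence of $\pInt$-convergent sequences enters. It is also worth flagging that the hypothesis $1<p<\infty$ is essential precisely here: reflexivity of $\pInt$ fails for $p=1$, and with it the weak compactness of $K$, so the exponent restriction cannot be dropped for this claim.
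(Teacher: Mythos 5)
Your proposal is correct and follows the same structure as the paper's proof: non-emptiness via the zero vector, convexity as clear/from convex constraints, weak closedness from norm-closedness together with convexity (Mazur's theorem, which the paper cites as \cite[Theorem~7.10]{Hundertmark2013}), and weak compactness from reflexivity of $\pInt$ plus the fact that a weakly closed subset of a weakly compact ball is weakly compact. You simply spell out the norm-closedness of the pointwise constraints via a.e.-convergent subsequences, which the paper leaves implicit.
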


	\begin{proofClaim}
		\begin{structuredproof}
			\proofitem{$K$ non-empty} $K$ contains at least the zero vector.
			\proofitem{$K$ convex} Clear.
			\proofitem{$K$ weakly closed} $K$ is strongly closed and convex and, thus, weakly closed by \cite[Ch.~V, 1.5.~Corollary]{ConwayACourseInFunctionalAnalysis}.
			\proofitem{$K$ weakly compact} $K$ is a subset of the (norm-)closed ball with radius $B_T$ in $\pInt^{\revised{E \times I\times\{+,-\}}}$. The latter is weakly compact since $\pInt^{\revised{E \times I\times\{+,-\}}}$ is reflexive (\cf \citealt[Theorem~6.25]{InfiniteDimensionalAnalysis}).

			Hence, as a weakly closed subset of a weakly compact set, $K$ is weakly compact as well.
			\qedhere
		\end{structuredproof}
	\end{proofClaim}

	\begin{claim}
		$\Gamma$ has non-empty and convex values.
	\end{claim}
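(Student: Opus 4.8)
The plan is to exploit that, for each fixed input $(f,r)\in K$, every element $(g,r')\in\Gamma(f,r)$ has the \emph{same} physical part $g=(g^+,g^-)$: the first two defining relations of $\Gamma$ force $g^-=\EdgeLoading_\horizon(f^+)$ and $g^+_{e,i}=r_{e,i}\cdot\bigl(u_{v,i}+\sum_{e'\in\edgesTo{v}}g^-_{e',i}\bigr)$ for every $e=vw$, and both expressions depend only on $(f,r)$, not on the chosen element. Hence $\Gamma(f,r)=\{(g^+,g^-)\}\times\ROp_\horizon(f)$, and the claim reduces to (i) checking that this distinguished pair $(g^+,g^-)$ together with some admissible $r'$ stays inside $K$, and (ii) invoking that $\ROp_\horizon(f)$ is non-empty and convex, which is a hypothesis of the theorem.

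For non-emptiness I would pick some $r'\in\ROp_\horizon(f)$ (possible since $\ROp_\horizon$ has non-empty values), let $g=(g^+,g^-)$ be defined by the two formulas above, and verify $(g^+,g^-,r')\in K$. Membership in $\rateFcts_\horizon^{(E\times I)^3}$ is immediate: $g^-$ is non-negative, supported in $[0,T]$ and $p$-integrable since $\EdgeLoading$ maps into $\rateFcts^{E\times I}$ and we truncate with $\CharF_{[0,T]}$; for $g^+$ one uses additionally $0\le r_{e,i}\le 1$ and $r_{e,i}\in\rateFcts_\horizon$, which gives the pointwise bound $0\le g^+_{e,i}\le u_{v,i}+\sum_{e'\in\edgesTo{v}}g^-_{e',i}$, an $L^p$ function on $[0,T]$; and $r'=\CharF_{[0,T]}\cdot r''$ for some $r''\in\ROp(f)$, so $0\le r'\le 1$ with support in $[0,T]$ because $\ROp$ is a routing operator. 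For the norm bound, local boundedness of $\EdgeLoading$ yields $\smallnorm{g^-}\le B'_\horizon$; the pointwise bound and the triangle inequality give $\smallnorm{g^+_{e,i}}_p\le\smallnorm{\CharF_{[0,T]}\cdot u}+\max_v\abs{\edgesTo{v}}\cdot B'_\horizon$; and $\smallnorm{r'_{e,i}}_p\le T^{1/p}$. By the choice $B_\horizon=\max_v\abs{\edgesTo{v}}\cdot B'_\horizon+\smallnorm{\CharF_{[0,T]}\cdot u}+T$ each of these three quantities is at most $B_\horizon$, so $(g^+,g^-,r')\in K$ and $\Gamma(f,r)\neq\emptyset$.

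For convexity, since $\Gamma(f,r)=\{(g^+,g^-)\}\times\ROp_\horizon(f)$ is the product of a singleton with the convex set $\ROp_\horizon(f)$, it is convex; equivalently, for $(g,r_1'),(g,r_2')\in\Gamma(f,r)$ and $\lambda\in[0,1]$ the convex combination equals $\bigl(g,\lambda r_1'+(1-\lambda)r_2'\bigr)$, which lies in $\Gamma(f,r)$ because $\lambda r_1'+(1-\lambda)r_2'\in\ROp_\horizon(f)$. There is no genuine obstacle in this claim; the only point demanding a little care is the bookkeeping that the uniquely determined pair $(g^+,g^-)$ — obtained by applying $\EdgeLoading_\horizon$ and then the node-balancing formula — does not leave the ball of radius $B_\horizon$, which is exactly what $B_\horizon$ was sized for. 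The conceptual takeaway is that all of the set-valuedness of $\Gamma$ is inherited from $\ROp_\horizon$, so the hypotheses "non-empty and convex values" pass through to $\Gamma$ verbatim.
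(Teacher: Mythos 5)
Your proof is correct and follows essentially the same route as the paper: non-emptiness is shown by picking $r'\in\ROp_\horizon(f)$, constructing the unique physical pair $(g^+,g^-)$ from the defining relations, and checking the norm bound against $B_\horizon$; convexity is reduced to the convexity of $\ROp_\horizon(f)$. Your explicit observation that $\Gamma(f,r)=\{(g^+,g^-)\}\times\ROp_\horizon(f)$ (so all set-valuedness is inherited from $\ROp_\horizon$) is a slightly cleaner way to present the convexity step but is the same underlying argument.
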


	\begin{proofClaim}
		Fix any $\revised{f} \in K$. Then, we have:
		\begin{structuredproof}
			\proofitem{$\Gamma(f)$ non-empty} \revised{Since $\ROp_T(f)$ is assumed to be non-empty, let $r\in\ROp_T(f)$ be any element thereof.
			Then, the flow $g$ defined by $g^-\coloneqq \EdgeLoading_\horizon(f^+)$ and $g^+_{e,i} \coloneqq r_{e,i}\cdot\bigl(u_{v,i}+\sum_{e\revMinor{'}\in \in \edgesTo{v}}g^-_{e\revMinor{'},i}\bigr)$ is an element of $\Gamma(f)$.}
			
			\proofitem{$\Gamma(f)$ convex} \revised{This follows directly from the convexity of $\ROp_\horizon(f)$: Let $\alpha\in[0,1]$ and let $g,h \in \Gamma(f)$.
			Then, for the outflow rates, it holds $(\alpha g + (1-\alpha) h)^- = \EdgeLoading_T(f^+) = g^- = h^-$.
			Let $\usg_{v,i} \coloneqq u_{v,i} + \sum_{e\in\edgesFrom{v}}\EdgeLoading_T(f^+)_{\revMinor{e,i}}$.
			Then, for the inflow rates, there are $r^g, r^h\in \ROp_T(f)$ with $g_{e,i}^+ = r_{e,i}^g\cdot\usg_{v,i}$ and $h_{e,i}^+ = r_{e,i}^h \cdot \usg_{v,i}$.
			Since $\ROp(f)$ is assumed to be convex, $r^{\alpha} \coloneqq \alpha \cdot r^g + (1-\alpha) r^h$ is in $\ROp_T(f)$, and we have
			$(\alpha\cdot g + (1-\alpha)\cdot h)^+ = r_{e,i}^\alpha \cdot \usg_{v,i}$.
			This concludes the proof.}
				\qedhere
		\end{structuredproof}
	\end{proofClaim}

	\begin{claim}\label{claim:GammaWeaklyClosedGraph}
		$\Gamma$ has a weakly closed graph.
	\end{claim}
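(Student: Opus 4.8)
The plan is to show that $\graph(\Gamma)$ is weakly closed in $K\times K$. Since $K$ is a bounded subset of the reflexive, separable space $\pInt^{E\times I\times\set{+,-,r}}$, its weak topology is metrizable, so it is enough to prove sequential weak closedness. Thus let $\bigl((f^{(n)},r^{(n)}),(g^{(n)},r'^{(n)})\bigr)_{n\in\IN}$ be a sequence in $\graph(\Gamma)$ with $(f^{(n)},r^{(n)})\weakto(f,r)$ and $(g^{(n)},r'^{(n)})\weakto(g,r')$ in $K$; I must verify that the weak limit $(g,r')$ satisfies the three relations defining $\Gamma(f,r)$.

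Two of them pass to the limit immediately. Since $f^{(n)+}\weakto f^+$ and $\EdgeLoading_\horizon$ is sequentially weak-weak continuous, $g^{(n)-}=\EdgeLoading_\horizon(f^{(n)+})\weakto\EdgeLoading_\horizon(f^+)$, and as also $g^{(n)-}\weakto g^-$, uniqueness of weak limits gives $g^-=\EdgeLoading_\horizon(f^+)$. Similarly, the pairs $(f^{(n)},r'^{(n)})$ lie in $\graph(\ROp_\horizon)$ and converge weakly to $(f,r')$, so the assumed sequential weak closedness of $\graph(\ROp_\horizon)$ yields $r'\in\ROp_\horizon(f)$.

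The remaining relation $g^+_{e,i}=r_{e,i}\cdot\bigl(u_{v,i}+\sum_{e'\in\edgesTo{v}}g^-_{e',i}\bigr)$, for each edge $e=vw$, is the technical heart. Abbreviate $h^{(n)}_{v,i}\coloneqq u_{v,i}+\sum_{e'\in\edgesTo{v}}g^{(n)-}_{e',i}$, and let $h_{v,i}$ be the analogous expression in the limit; by the previous paragraph $h^{(n)}_{v,i}\weakto h_{v,i}$, while along the sequence $g^{(n)+}_{e,i}=r^{(n)}_{e,i}h^{(n)}_{v,i}$ and $g^{(n)+}_{e,i}\weakto g^+_{e,i}$, so the task reduces to identifying the weak limit of the products $r^{(n)}_{e,i}h^{(n)}_{v,i}$ as $r_{e,i}h_{v,i}$. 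I would decompose $r^{(n)}_{e,i}h^{(n)}_{v,i}-r_{e,i}h_{v,i}=(r^{(n)}_{e,i}-r_{e,i})\,h_{v,i}+r^{(n)}_{e,i}\,(h^{(n)}_{v,i}-h_{v,i})$. The first term tends to $0$ weakly in $\pInt$: from $0\le r^{(n)}_{e,i}\le1$ on $[0,T]$ one gets $r^{(n)}_{e,i}\weakto r_{e,i}$ also weak-$*$ in $L^\infty$, and since $h_{v,i}\phi\in L^1$ for every $\phi\in L^q$, $\int(r^{(n)}_{e,i}-r_{e,i})h_{v,i}\phi\to0$. The second term $r^{(n)}_{e,i}(h^{(n)}_{v,i}-h_{v,i})$ is where I expect essentially all the difficulty to lie: it is a product of two sequences that converge only weakly, which in general does \emph{not} pass to the limit, so one must exploit more than the bare weak-weak continuity of $\EdgeLoading$ applied to a single inflow sequence. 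The route I would pursue is to use that, restricted to the finite horizon $[0,T]$ and to the bounded family of inflows occurring here, $\EdgeLoading_\horizon$ turns the weak convergence $f^{(n)+}\weakto f^+$ into norm convergence of the outflows, i.e.\ $\smallnorm{h^{(n)}_{v,i}-h_{v,i}}_p\to0$; then, since $\smallnorm{r^{(n)}_{e,i}}_\infty\le1$, $\smallnorm{r^{(n)}_{e,i}(h^{(n)}_{v,i}-h_{v,i})}_p\le\smallnorm{h^{(n)}_{v,i}-h_{v,i}}_p\to0$ and this term vanishes even in norm. Granting this, $g^{(n)+}_{e,i}\weakto r_{e,i}h_{v,i}$, hence $g^+_{e,i}=r_{e,i}h_{v,i}$; all three relations hold, $(g,r')\in\Gamma(f,r)$, and $\graph(\Gamma)$ is weakly closed — which, together with the two earlier claims, supplies the hypotheses of the Kakutani-Fan-Glicksberg theorem.
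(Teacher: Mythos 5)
Your proposal follows the paper's proof structure exactly: reduce to sequential weak closedness via metrizability of $K\times K$, verify the three defining relations of $\Gamma$, and note that $g^-=\EdgeLoading_\horizon(f^+)$ and $r'\in\ROp_\horizon(f)$ pass to the limit directly from the stated hypotheses. You correctly isolate the middle relation $g^+_{e,i}=r_{e,i}h_{v,i}$ as the technical heart, and your handling of the first term in the decomposition is sound (bounded $r^{(n)}_{e,i}$ upgrades weak $L^p$ to weak-$*$ $L^\infty$ convergence on $[0,T]$, paired against $h_{v,i}\phi\in L^1$). Where you and the paper diverge is the second term $r^{(n)}_{e,i}(h^{(n)}_{v,i}-h_{v,i})$: the paper asserts, prefaced only with ``Similarly,'' that $r^{(n)}_{e,i}h^{(n)}_{v,i}\weakto r_{e,i}h_{v,i}$, while you correctly flag that a product of two merely weakly convergent $L^p$ sequences need not converge to the product of the limits, and to repair this you postulate weak-to-strong continuity (compactness) of $\EdgeLoading_\horizon$ on the relevant bounded set.

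Your hesitation at this step is well placed; it is the one place in the paper's argument that does not obviously follow from its hypotheses. Under weak-weak continuity alone the product convergence can genuinely fail: on a Vickrey edge $e'$ entering $v$ with free-flow time $0$ and large capacity (so $\EdgeLoading_\horizon$ acts as the identity in this coordinate, hence is weak-weak continuous there), take $f^{(n),+}_{e'}=1+\sin(n\theta)$ and $r^{(n)}_{e}=\tfrac12(1+\sin(n\theta))$ on $[0,T]$ with $u_{v,i}\equiv 0$; then $g^{(n),+}_e=\tfrac12(1+\sin(n\theta))^2\weakto\tfrac34$, whereas $r_e\cdot g^-_{e'}=\tfrac12$, so $(g,r')\notin\Gamma(f,r)$ and the graph is not weakly closed. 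Be aware, though, that your proposed patch is also a strictly stronger hypothesis than the theorem states, and it does not hold for the paper's flagship examples: for Vickrey with positive free-flow time, the outflow on queue-free intervals is a time-shift of the inflow rate (weak but not norm convergent), and for the affine-linear volume-delay model only the \emph{cumulative} outflows converge uniformly, not the rates. So your version does not establish \Cref{claim:GammaWeaklyClosedGraph} under the theorem's stated hypotheses — but the paper's own ``Similarly'' does not close this step either. The gap you located appears to be real and closing it seems to require a different structural idea than compactness of $\EdgeLoading_\horizon$.
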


	\begin{proofClaim}
		Since $K \times K$ is a weakly closed set by \Cref{claim:PropertiesOfK}, it suffices to show that $\graph(\Gamma)$ is weakly closed in $K \times K$.
		By \Cref{prop:weakly-metrizable}, this is equivalent to showing that $\graph(\Gamma)$ is sequentially weakly closed.
		Hence, fix some sequence \revised{$(f^{(n)}, g^{(n)})_n$} weakly converging to \revised{$(f, g)$} in $K \times K$.

		This implies that $(f^{(n),+})_n$ is weakly converging to $f^+$ and, therefore, $\EdgeLoading_\horizon(f^{(n),+})$ is weakly converging to $\EdgeLoading_\horizon(f^+)$ by the sequential weak-weak continuity of $\EdgeLoading_\horizon$.
		As $g^-$ is the weak limit of $g^{(n),-} = \EdgeLoading_\horizon(f^{(n),+})$, we must have $g^- = \EdgeLoading_\horizon(f^+)$.
        \revised{
		Thus, the sequence $(\usg[n]_{v,i})_n \coloneqq (u_{v,i}+\sum_{e\in\edgesTo{v}} g_{e,i}^{(n),-})_n$ converges weakly to $\usg_{v,i} \coloneqq u_{v,i}+\sum_{e\in\edgesTo{v}} g_{e,i}^-$.
		Further, for all $n$, there exist $r^{(n)}$ in $\ROp_T(f^{(n)})$ such that $g^{(n),+}_{e,i} = r^{(n)}_{e,i} \cdot \usg[n]_{v,i}$.
		By regularity of $\ROp_T$, the graph of the operator in~\eqref{eq:regularity} is weakly sequentially closed.
		Therefore, there exists some $r\in\ROp_T(f)$ such that $g^+_{e,i} = r_{e,i} \cdot \usg_{v,i}$ holds for all $e\in E, i\in I$.
		This implies $g\in \Gamma(f)$.
		}\end{proofClaim}

	Using the previous three claims, \Cref{thm:KFGFixedPointThm} now guarantees the existence of a fixed point of $\Gamma$, \ie a \revised{flow $f \in K$ with $f\in\Gamma(f)$}.
	\revised{As argued above,} this flow then fulfils
	$f^-=\EdgeLoading_\horizon(f^+)$, implying that $f$ is consistent with $\EdgeLoading$ until $\horizon$\revMinor{, and}
	\revised{there is some $r\in\ROp_T(f)$ with} $f_{e,i}^+ = r_{e,i}\cdot\bigl(u_{v,i}+\sum_{e\revMinor{'}\in\edgesTo{v}}f_{e\revMinor{'},i}^-\bigr)$ \revised{for all $e=vw$}, which together imply that $f$ fulfils flow conservation until $\horizon$ and is consistent with the routing operator $\ROp$ until $\horizon$.
\end{proof}
}

	\proofThmExistenceFiniteTimeHorizon[Proof of \Cref{thm:existence-finite-time-horizon}]

The previous theorem shows existence of \coherent{} flows up to any \emph{finite} time horizon in continuous models.
For some models (like certain full information equilibria with finitely lasting network inflow) this is already enough to guarantee existence of a coherent flow for all times as one can just choose a large enough time horizon~$\horizon$ such that all coherent flows terminate before that time (\cf \Cref{cor:existence-termination}).
For other models, this may not be possible (\eg due to infinitely lasting network inflow rates or because coherent flows allow cycling behaviour{} -- see, for example, \cite[Theorem~6.1]{DynamicFlowswithAdaptiveRouteChoice}).
Here, under the additional assumption of causality \revised{(\Cref{def:causal-edge-loading,def:causal-routing-operator})}, we can still reach an infinite time horizon{} (\cf \Cref{thm:ExtensionIfContinuous}){} by following an extension-based approach similar to the one used in \cite[Section~4.1]{GrafThesis}.

\begin{revisedEnv}
	This extension-based approach requires that we can extend any coherent flow with a finite time horizon by some positive (but arbitrarily small) time period.
	Using Zorn's Lemma, we can then obtain a coherent flow until $\infty$:
\end{revisedEnv}

\begin{definition}\label{def:ext-prop}
	A pair of an edge loading operator $\EdgeLoading$ and a routing operator $\ROp$ fulfils the \emph{\extProp}, if for all $\horizon\in\IRnn$ and for every flow $f$ that is \coherent{} until time~$\horizon$, there exists some $\alpha\in\IRp$ and a flow $g$ that coincides with $f$ until time~$\horizon$ and is \coherent{} until $\horizon+\alpha$.
\end{definition}

\begin{lemma}\label{lem:existence-using-ext-prop}
	Assume that $\EdgeLoading$ is locally bounded \revised{(\Cref{def:locally-bounded})} and causal, $\ROp$ is causal \revised{and has non-empty values}, and they fulfil the \extProp.
	Then, there exists a flow that is \coherent{} until time $\infty$.
\end{lemma}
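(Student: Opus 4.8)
The plan is a maximality argument (Zorn's Lemma) on partial \coherent{} flows, using the \extProp{} to rule out a finite maximal element.

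\textbf{Poset and reduction.} I would work with
\[
	\mathcal{P}\coloneqq\Set{(\horizon,f) | \horizon\in\IRnnInf,\ f\in\flowSet\text{ is \coherent{} until time }\horizon},
\]
ordered by $(\horizon_1,f_1)\preceq(\horizon_2,f_2)$ if and only if $\horizon_1\le\horizon_2$ and $f_1,f_2$ coincide until time $\horizon_1$. It is non-empty, as any flow is vacuously \coherent{} until time $0$. Once I show that every chain in $\mathcal{P}$ has an upper bound in $\mathcal{P}$, Zorn's Lemma yields a maximal element $(\horizon^\ast,f^\ast)$, and necessarily $\horizon^\ast=\infty$: if $\horizon^\ast$ were finite, the \extProp{} would furnish $\alpha\in\IRp$ and a flow $g$ coinciding with $f^\ast$ until $\horizon^\ast$ and \coherent{} until $\horizon^\ast+\alpha$, hence $(\horizon^\ast+\alpha,g)\succ(\horizon^\ast,f^\ast)$, contradicting maximality. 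So $f^\ast$ is \coherent{} until $\infty$, and the whole work reduces to the chain condition.

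\textbf{Gluing a chain.} Given a chain $\mathcal{C}$, let $\horizon^\ast\coloneqq\sup\Set{\horizon | (\horizon,f)\in\mathcal{C}}$; if this supremum is attained the attaining element is already an upper bound, so assume it is not and fix a countable cofinal, strictly increasing sequence $(\horizon_n,f^{(n)})_n$ in $\mathcal{C}$ with $\horizon_n\to\horizon^\ast$. Define the glued flow $f$ by $f\coloneqq f^{(n)}$ on $[\horizon_{n-1},\horizon_n)$ (with $\horizon_0\coloneqq 0$) and $f\coloneqq 0$ on $[\horizon^\ast,\infty)$; this is well-defined since any two chain flows coincide up to the smaller of their horizons, and therefore $f$ coincides with each $f^{(n)}$ until $\horizon_n$ and with every $(\horizon_\gamma,f_\gamma)\in\mathcal{C}$ until $\horizon_\gamma$. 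To see $f\in\flowSet$: this is immediate when $\horizon^\ast=\infty$, since on every bounded interval $f$ agrees with some $f^{(n)}\in\flowSet$; when $\horizon^\ast<\infty$, local boundedness of $\EdgeLoading$ bounds $\smallnorm{\CharF_{[0,\horizon_n)}f^-}=\smallnorm{\CharF_{[0,\horizon_n)}\EdgeLoading(f^{(n),+})}$ uniformly in $n$, so $\CharF_{[0,\horizon^\ast)}f^-\in\pInt$ by monotone convergence, and flow conservation together with $u\in\rateFcts$ then also puts $\CharF_{[0,\horizon^\ast)}f^+$ into $\pInt$.

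\textbf{Coherence of the glued flow until $\horizon^\ast$.} Fix $n$. Flow conservation until $\horizon_n$ passes from $f^{(n)}$ to $f$ since they coincide there. $\EdgeLoading$-consistency until $\horizon_n$ holds because causality of $\EdgeLoading$, applied to the coinciding inflows $f^+$ and $f^{(n),+}$, makes $\EdgeLoading(f^+)$ and $\EdgeLoading(f^{(n),+})$ coincide until $\horizon_n$, while $f^-$ coincides with $f^{(n),-}=\EdgeLoading(f^{(n),+})$ there. Causality of $\ROp$ gives $\CharF_{[0,\horizon_n]}\cdot\ROp(f)=\CharF_{[0,\horizon_n]}\cdot\ROp(f^{(n)})$, so a witness $r^{(n)}\in\ROp(f^{(n)})$ of $f^{(n)}$'s $\ROp$-consistency transports to some $\bar r^{(n)}\in\ROp(f)$ that reproduces $f$'s flow split until $\horizon_n$. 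Since the intervals $[0,\horizon_n)$ exhaust $[0,\horizon^\ast)$, taking unions over $n$ gives flow conservation and $\EdgeLoading$-consistency until $\horizon^\ast$. The remaining --- and, I expect, the main --- obstacle is to upgrade the family $(\bar r^{(n)})_n$ to a \emph{single} $r\in\ROp(f)$ witnessing $\ROp$-consistency on all of $[0,\horizon^\ast)$: a naive time-wise patching of the $\bar r^{(n)}$ need not land back in $\ROp(f)$, so this step has to be carried through causality of $\ROp$ (comparing $\ROp(f)$ with the routing operator evaluated at the truncations of $f$) rather than by direct construction. Granting this last point, $(\horizon^\ast,f)\in\mathcal{P}$ dominates every element of $\mathcal{C}$, so the chain condition holds and the Zorn argument closes.
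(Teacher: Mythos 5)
Your proof follows exactly the paper's route: Zorn's Lemma on a poset of (flow, time horizon) pairs ordered by extension, the \extProp{} to rule out a finite maximal horizon, and gluing a countable cofinal subsequence of a chain using causality plus local boundedness of $\EdgeLoading$ (the latter to keep the glued flow $p$-integrable when $\horizon^\ast<\infty$). A minor technical difference: the paper restricts the poset to pairs $(f,\horizon)$ with $\mathrm{ess\ supp}(f)\subseteq[0,\horizon]$, making the order antisymmetric, whereas yours is only a preorder; Zorn's Lemma holds for preorders, so this is harmless.

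The obstacle you flag at the very end --- producing a \emph{single} $r\in\ROp(f)$ certifying $\ROp$-consistency of the glued $f$ on all of $[0,\horizon^\ast)$ --- is a real subtlety, and the paper's own proof does not address it either: it only asserts ``by the causality of $\EdgeLoading$ and $\ROp$, $f$ is consistent with $\EdgeLoading$ and $\ROp$ until $\horizon$''. What causality actually gives is, for each $n$, the set identity $\CharF_{[0,\horizon^{(n)}]}\cdot\ROp(f)=\CharF_{[0,\horizon^{(n)}]}\cdot\ROp(f^{(n)})$, hence some $\rho_n\in\ROp(f)$ whose flow split reproduces $f^+$ on $[0,\horizon^{(n)})$; it does \emph{not} by itself yield a common witness, and the naively patched function $\sum_n\CharF_{[\horizon^{(n-1)},\horizon^{(n)})}\cdot\rho_n$ need not lie in $\ROp(f)$ for an abstract set-valued $\ROp$. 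The gap closes in every instance the lemma is applied to in the paper: for a prescriptive $\ROp$ the set $\ROp(f)$ is a singleton, so all $\rho_n$ coincide; for the DPE and SPE routing operators in \eqref{eq:RoutingOperatorPredictionEquilibrium} and \eqref{eq:stochastic-routing-operator}, membership in $\ROp(f)$ is defined by almost-everywhere pointwise constraints on $r$, so the patched function remains a member. But for an arbitrary causal $\ROp$ with nonempty values one would genuinely need an extra hypothesis (e.g.\ closedness of $\ROp(f)$ under countable patchings on disjoint intervals, or a weak-compactness argument extracting a common witness from the $\rho_n$). So your scepticism is well placed; the lemma as stated leans implicitly on structure that is present in the intended instantiations but not in the abstract hypotheses.
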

\newcommand{\proofLemExistenceUsingExtProp}[1][Proof]{
\begin{proof}[#1]
	\newcommand*{\pord}{\preceq}
	We call the pair $(f, \horizon)$ of a flow $f$ and a time $\horizon\in\IRnnInf$ a \emph{partially coherent flow} if $f$ is \coherent{} until time $\horizon$ and the essential support of $f$ is contained in $[0,\horizon]$.
	On the set of all partially coherent flows, we define the partial order $\pord$ such that $(f,\horizon)\pord (g,\horizon')$ is equivalent to $\horizon\leq\horizon'$ and $f = \CharF_{[0,\horizon]}\cdot g$.
	We now use Zorn's Lemma (\cite[Lemma~1.7]{InfiniteDimensionalAnalysis}) to show that there exists a \revMinor{$\pord$-maximal partially coherent flow $(f^*, H^*)$}.
	By the \extProp, the horizon \revMinor{$H^*$} of this maximal element must then be $\infty$\revMinor{, and thus $f^*$ is a coherent flow until $\infty$}.

	\revised{We call a subset $C$ of the set of partially coherent flows a \emph{$\pord$-chain} if $\pord$ is a total order on the set $C$.}
	To apply Zorn's Lemma, we need to show that any \revMinor{$\pord$-}chain $C$ of partially coherent flows has an upper bound.
	If $C$ is the empty \revMinor{set}, then $(0, 0)$ is an upper bound\revised{, since for the zero-flow $f\coloneqq 0$, $\ROp(f)$ is non-empty, and therefore, $f$ is consistent with $\ROp$ until time $0$, and thus $f$ is coherent until $0$.}
    Otherwise, let $(f^{(n)}, \horizon^{(n)})_{n\in\IN_{\geq 1}}$ be a non-decreasing (\wrt $\pord$) sequence in $C$ such that $\lim_{n\to\infty} \horizon^{(n)} = \horizon^*$ where $\horizon \coloneqq \sup_{(f,\horizon)\in C}\horizon$.
    We define \[
        f \coloneqq \sum_{n\in\IN_{\geq 1}} \CharF_{[\horizon^{(n-1)}, \horizon^{(n)})} \cdot f^{(n)}
	\]
	with $\horizon^{(0)}\coloneqq 0$.
    We first show that (every entry of) $f$ is locally $p$-integrable:
    If $\horizon=\infty$ holds, this is clear (on a bounded interval, $f$ is equal to some $f^{(n)}$ for large enough $n$).
    Otherwise $\horizon < \infty$ and by the local boundedness of $\EdgeLoading$ \revMinor{there is some $B'_{\horizon}\in\IRnn$ with}
	\begin{align*}
        B'_{\horizon}
		&\geq
        \lim_{n\to\infty} \norm{ \CharF_{[0, \horizon^{(n)}]} \cdot \EdgeLoading(f^{(n), +}) }_p
        \symoverset{1}{=} \lim_{n\to\infty} \norm{ \CharF_{[0, \horizon^{(n)}]} \cdot f^{(n),-} }_p
        = \norm{ \CharF_{[0, \horizon]} \cdot f^- }_p,
	\end{align*}
	where we use the fact that $(f^{(n)},\horizon^{(n)})$ is partially coherent at~\refsym{1}.
    Moreover, $f$ fulfils flow conservation until $\horizon$ (as all $f^{(n)}$ do) and, thus, $\smallnorm{f}_{\revMinor{p}}\leq \smallnorm{\restr{u}{[0,\horizon]}}_{\revMinor{p}} + \max_{v\in V} \abs{\edgesFrom{v}} \cdot \bound'_{\horizon}$.
    By the causality of $\EdgeLoading$ and $\ROp$, $f$ is consistent with $\EdgeLoading$ and $\ROp$ until $\horizon$ and therefore $(f, \horizon)$ is a partially coherent flow and, in particular, an upper bound of the chain $C$.
\end{proof}
}
\proofLemExistenceUsingExtProp

The extension existence property holds if the operators are continuous on the extension time period\revMinor{, as we show in the following}.
We define the following sets{} of functions:
For any (finite) time horizon $\horizon\in\IRnn$, $\alpha\in\IRp$ and flow $f\in\flowSet_{\horizon}$, we define the set of possible extensions of the inflow rates $f^+$ as
\begin{equation}\label{eq:extension-inflows}
	\Omega_{\horizon,\alpha}^+(f^+) \coloneqq \set{ g^+ \in \rateFcts_{\horizon+\alpha}^{E\times I} | \CharF_{[0,\horizon]} \cdot g^+ = \CharF_{[0,\horizon]}\cdot f^+ }
\end{equation}
and the set of flow extensions of $f$ as
\begin{equation}\label{eq:extension-flows}
	\Omega_{\horizon,\alpha}(f) \coloneqq \set{ g \in \flowSet_{\horizon +\alpha} | \CharF_{[0, \horizon]}\cdot g = \CharF_{[0,\horizon]}\cdot f}.
\end{equation}

\begin{theorem}\label{thm:ExtensionIfContinuous}
	Let $\EdgeLoading$ be a locally bounded and causal edge loading operator and let $\ROp$ be a causal routing operator \revised{with non-empty values} such that for every flow $f$ that is coherent until some time $\horizon\in\IRnn$ there exists some $\alpha\in\IRp$ such that
	\begin{enumerate}[label=(\roman*)]
		\item\label{EdgeLoadingContinuous} the map $\restr{\EdgeLoading_{\horizon+\alpha}}{\Omega^+_{\horizon,\alpha}(f^+)}$ is sequentially weak-weak continuous \revMinor{\wrt $\normFct_p$}, and
		\item\label{RoutingContinuous} the map $\restr{\ROp_{\horizon+\alpha}}{\Omega_{\horizon,\alpha}(f)}$ has convex
		values and \revised{the operator $\smallrestr{\tilde{\Gamma}}{\tilde{\Omega}_{\horizon,\alpha}(f)}$ has a sequentially weakly closed graph \wrt $\normFct_p$, where $\tilde{\Gamma}$ is given by~\eqref{eq:regularity} and $\tilde{\Omega}_{\horizon,\alpha}(f)$ is the product set of $\Omega_{\horizon,\alpha}(f)$ and $\{( u_{v,i} + \sum_{e\in\edgesFrom{v}} g_{e,i}^- )_{v,i}\mid g\in \Omega_{\horizon, \alpha}(f) \}$}.
	\end{enumerate}
	Then, $\EdgeLoading$ and $\ROp$ fulfil the \extProp{} and there exists a flow that is coherent until $\infty$.
\end{theorem}
\newcommand{\proofThmExtensionIfContinuous}[1][Proof]{
\begin{proof}[#1]
	Let $h$ be a flow that is \coherent{} until time $\horizon$.
	We show that there exists a flow $f$ that is \coherent{} until time $\horizon+\alpha$ and coincides with $h$ until $\horizon$.
	
	For this purpose, we adjust the proof of \Cref{thm:existence-finite-time-horizon}:
	We use the set~$K$ and correspondence~$\Gamma$ as defined there but with all occurrences of~$\horizon$ replaced by~$\horizon+\alpha$.
	We then add the \revised{constraint} $\CharF_{[0, \horizon]} \cdot (f^+, f^-) = \CharF_{[0,\horizon]}\cdot h$
	to the set $K$ (and restrict the domain and values of $\Gamma$ to the new $K$ and subsets of the new $K$, respectively).
	First, note that $K$ is still non-empty (since $\CharF_{[0,\horizon]}\cdot \revised{h}$ is in $K$), convex and weakly compact.
	To see the latter, note that $K$ is still strongly closed and convex and thus weakly closed.

	\revised{
	Every $\Gamma(f)$ remains non-empty:
	To show this, we first define $g^-\coloneqq \EdgeLoading_{\horizon + \alpha}(f^+)$.
	}By causality of $\EdgeLoading$ and as $f^+$ and $h^+$ coincide until $\horizon$, it follows that
	\[
		\CharF_{[0,\horizon]} \cdot g^-
		=\CharF_{[0,\horizon]} \cdot \EdgeLoading(f^+)
		= \CharF_{[0,\horizon]} \cdot \EdgeLoading(h^+)
		= \CharF_{[0,\horizon]} \cdot h^-.
	\]
	\revised{
	Similarly, by the causality of $\ROp$, we have $\ROp_\horizon(h) = \CharF[[0, \horizon]] \cdot \ROp(f)$.
	Since $h$ is consistent with $\ROp$ until $\horizon$, there exists some $r\in\ROp(f)$ such that we have $\CharF[[0, \horizon]] \cdot h^+_{e,i} = \CharF[[0, \horizon]] \cdot r_{e,i} \cdot (u_{v,i} + \sum_{e\revMinor{'}\in\edgesTo{v}} f_{e\revMinor{'},i}^-)$ for all $e=vw \in E$.
	Thus, we can define the witness $g\coloneqq (g^-, g^+)$ with $g^+_{e,i}\coloneqq r_{e,i}\cdot (u_{v,i} + \sum_{e'\in\edgesTo{v}} g_{e',i}^-)$ which then also fulfils the newly added constraint of $K$, since also $g^-$ and $h^-$ coincide on $[0, \horizon]$.
	Thus, $g\in \Gamma(f)$.
	}

	Clearly, the values of $\Gamma$ remain convex.
	Finally, with the same arguments as in the proof of \Cref{claim:GammaWeaklyClosedGraph}, the graph of $\Gamma$ is again weakly closed.
	
	Hence, by \Cref{thm:existence-finite-time-horizon} there exists a fixed point $\revised{f}$ of $\Gamma$\revised{, and there is some $r\in\ROp_{\horizon + \alpha}(f)$ such that $f$ is consistent with $\ROp$ until $\horizon + \alpha$.}
	The flow $f$ does not only fulfil the desired properties until time $\horizon+\alpha$, but as $f$ is an element of $K$, coincides with $h$ until time~$\horizon$.
	
	Therefore, \revMinor{$f$} is an extension of~\revMinor{$h$} proving the extension existence property for $\EdgeLoading$ and $\ROp$.
	The existence of a coherent flow until~$\infty$ then follows immediately by \Cref{lem:existence-using-ext-prop}.
\end{proof}
}
\proofThmExtensionIfContinuous

	If the model satisfies \emph{strict} causality, then the extension existence property holds even without the continuity assumptions of the previous \namecref{thm:ExtensionIfContinuous}:

\newcommand{\lemmaExistenceIfStrictlyCausal}{
\begin{lemma}\label{prop:existence-if-strictly-causal}
	Assume $\EdgeLoading$ and $\ROp$ are strictly causal and assume that $\ROp$ has non-empty values.
	Then, $\EdgeLoading$ and $\ROp$ fulfil the \extProp.
	
	If, additionally, $\EdgeLoading$ is locally bounded, then there exists a flow that is \coherent{} until time~$\infty$.
\end{lemma}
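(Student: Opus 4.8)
The plan is to establish the \extProp{} by an explicit construction and then to derive the $\infty$-horizon statement from \Cref{lem:existence-using-ext-prop}. Since strict causality trivially implies causality, the only genuinely new ingredient is the \extProp, so fix a finite $\horizon\in\IRnn$ and a flow $f$ that is \coherent{} until $\horizon$, and choose a witness $r^*\in\ROp(f)$ for the consistency of $f$ with $\ROp$ on $[0,\horizon)$.

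Rather than trying to build an extension on an interval $[\horizon,\horizon+\alpha)$ directly, I would first write down a \emph{global} candidate flow $g$ in closed form: put $g^-\coloneqq\EdgeLoading(f^+)$ and then, for every edge $e=vw\in E$ and commodity $i\in I$, put $g^+_{e,i}\coloneqq r^*_{e,i}\cdot\bigl(u_{v,i}+\sum_{e'\in\edgesTo{v}}g^-_{e',i}\bigr)$. This definition is non-circular --- $g^-$ depends only on the given $f^+$, and then $g^+$ only on $g^-$ --- and $g\in\flowSet$: the vector $g^-$ lies in $\rateFcts^{E\times I}$ because that is the codomain of $\EdgeLoading$, and $g^+$ is again a vector of rate functions because $0\le r^*_{e,i}\le 1$ almost everywhere. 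Using the consistency of $f$ with $\EdgeLoading$ on $[0,\horizon)$, the flow-conservation identity for $f$, the consistency of $f$ with $\ROp$ via $r^*$, and the routing-operator identity $\sum_{e\in\edgesFrom{v}}r^*_{e,i}=\CharF_{v\neq t_i}$, a short computation shows that (a) $g$ coincides with $f$ until $\horizon$; (b) $g$ fulfils flow conservation at almost all times; and (c) $g^+_{e,i}=r^*_{e,i}\cdot\sum_{e'\in\edgesFrom{v}}g^+_{e',i}$ at almost all times, for every $e=vw\in E$ and $i\in I$.

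Now I would apply the two causality hypotheses to this fixed flow $g$. Since $f^+$ and $g^+$ coincide until $\horizon$, strict causality of $\EdgeLoading$ yields some $\alpha_1\in\IRp$ with $g^-=\EdgeLoading(f^+)=\EdgeLoading(g^+)$ on $[0,\horizon+\alpha_1)$, so $g$ is consistent with $\EdgeLoading$ until $\horizon+\alpha_1$. Since $f$ and $g$ coincide until $\horizon$ and $r^*\in\ROp(f)$, strict causality of $\ROp$ yields some $\alpha_2\in\IRp$ together with some $\tilde r\in\ROp(g)$ that coincides with $r^*$ until $\horizon+\alpha_2$; combined with (c), this makes $g$ consistent with $\ROp$ until $\horizon+\alpha_2$ via the witness $\tilde r$. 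Taking $\alpha\coloneqq\min\{\alpha_1,\alpha_2\}>0$ and invoking (a) and (b) as well, the flow $g$ is \coherent{} until $\horizon+\alpha$ and coincides with $f$ until $\horizon$, which is exactly the \extProp. If $\EdgeLoading$ is additionally locally bounded, then \Cref{lem:existence-using-ext-prop} applies --- its hypotheses, namely local boundedness of $\EdgeLoading$, causality of both operators, and the \extProp, are now all available --- and yields a flow that is \coherent{} until $\infty$; here the assumption that $\ROp$ has non-empty values is what provides the coherent flow to start from (\eg the zero flow until time $0$).

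The obstacle the argument has to circumvent is a potential circularity: strict causality only supplies an extension length $\alpha$ that depends on the flow one compares against, whereas the flow one would like to prescribe on $[\horizon,\horizon+\alpha)$ seemingly depends on $\alpha$. The key point --- and, I expect, the only delicate step --- is that the natural candidate (let the physical operator act on the unchanged inflow $f^+$ and re-route the resulting node inflow by the old split $r^*$) admits a closed-form description that does not involve $\alpha$ at all, so that $\alpha_1$ and $\alpha_2$ can be extracted only afterwards. It is precisely in the $\ROp$-part of this last step that strict causality of the \emph{routing} operator, not merely of $\EdgeLoading$, is used, because $\ROp$ has to be evaluated at the constructed flow $g$ rather than at $f$.
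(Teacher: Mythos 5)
Your proof is correct and follows essentially the same approach as the paper's: define the global candidate $g$ by re-applying the old split $r^*$ to $\EdgeLoading(f^+)$, observe that $g$ coincides with $f$ on $[0,\horizon)$, and only then extract $\alpha_1$, $\alpha_2$ from strict causality of $\EdgeLoading$ and $\ROp$ applied to the fixed pair $(f,g)$. The only cosmetic differences are that the paper truncates with $\EdgeLoading_{\horizon+1}$ and normalizes $\alpha\leq 1$, while you leave $g$ untruncated, and your remark about where the non-emptiness of $\ROp$ is actually needed (to make the zero flow coherent until $0$ in the base case of \Cref{lem:existence-using-ext-prop}) is a correct and useful observation the paper leaves implicit.
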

}
\lemmaExistenceIfStrictlyCausal
\newcommand{\proofPropExistenceIfStrictlyCausal}[1][Proof]{
	\begin{proof}[#1]
		Let $\horizon\in\IRnn$ and let $f$ be a flow that is \coherent{} until $\horizon$.
		Choose any element $r \in \ROp(f)$ witnessing the consistency of $f$ with $\ROp$ until~$\horizon$
		and define $g^- \coloneqq \EdgeLoading_{\horizon + 1}(f^+)$ and $g^+_{e,i} \coloneqq r_{e,i}\cdot(u_{v,i}+\sum_{e\revMinor{'}\in\edgesFrom{v}} g_{e\revMinor{'},i}^-)$
		for $e\in E$, $i\in I$.
		Since $f$ is consistent with~$\EdgeLoading$ until~$\horizon$, this implies that $f$ and $g$ conincide until time~$\horizon$. 
		Hency, by strict causality, there exists some $\alpha\in\IRp$ (without loss of generality assume $\alpha\leq 1$) such that $\CharF_{[0,\horizon+\alpha]} \cdot \ROp(g) = \CharF_{[0,\horizon+\alpha]}\cdot \ROp(f) \ni \CharF_{[0,\horizon+\alpha]}\cdot r$ and $\CharF_{[0,\horizon+\alpha]}\cdot \EdgeLoading(g^+) = \CharF_{[0,\horizon+\alpha]}\cdot \EdgeLoading(f^+) = g^-$ hold.
		Then, $g$ is \coherent{} until $\horizon+\alpha$.
		
		The additional part of the \namecref{prop:existence-if-strictly-causal} then follows immediately by \Cref{lem:existence-using-ext-prop}.
	\end{proof}
}
\proofPropExistenceIfStrictlyCausal

	For non-causal models, on the other hand, extensions of the form used in the proof of \Cref{lem:existence-using-ext-prop} do not work as extending a partially coherent flow can also affect whether or not the flow is coherent before the extension interval.\footnote{The thin-flow extensions used in \cite[Section~4]{CCLDynEquil} within the (non-causal) full information setting are a different type of extension as \iftrue these extensions \else they \fi determine the whole trajectory of the involved particles from source to sink.
	}
	Here, one instead often argues (e.g., \cite[Lemma~3]{CCLDynEquil}) that there exists some large but finite time horizon $\horizon$ such that for any coherent flow until $\horizon$ all flow has already left the network and, thus, it can be extended by the zero flow to a coherent flow until $\infty$.

\newcommand{\defTerminates}{
\begin{definition}
	A flow $f$ \emph{terminates until time $\horizon$} if the essential support of $f$ and the essential support of $\EdgeLoading(f^+)$ are contained in $[0, \horizon]$.
\end{definition}
}
\defTerminates
\newcommand{\corExistenceTermination}{
\begin{corollary}\label{cor:existence-termination}
	Suppose the assumptions of \Cref{thm:existence-finite-time-horizon} hold for some $\horizon\in\IRnn$ and assume the following two conditions:
	\begin{enumerate}[label=(\roman*)]
		\item The essential support of $u$ is contained in $[0, \horizon]$.
		\item\label{condition:finite-consistency} Any flow in $\flowSet_\horizon$ that is \coherent{} until time $\horizon$ and fulfils flow conservation until time~$\infty$ is already consistent with $\EdgeLoading$ until time $\infty$.
	\end{enumerate}
	Then, there exists a flow which is \coherent{} until time $\infty$.
	
	Condition~\ref{condition:finite-consistency} is fulfilled if every flow in $\rateFcts_T^{E\times I\times\{+,-\}}$ that is \coherent{} until time $\horizon$ and fulfils flow conservation until time $\infty$ terminates until time $\horizon$.
\end{corollary}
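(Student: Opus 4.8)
The plan is to recycle the flow produced by \Cref{thm:existence-finite-time-horizon} and check that, under the two additional hypotheses, it is in fact coherent for all times. First I would invoke \Cref{thm:existence-finite-time-horizon} with the given $\horizon$ to obtain a flow $f\in\flowSet_\horizon$ that is coherent until time~$\horizon$. The one structural fact that makes everything go through is that $f$ lies in $\rateFcts_\horizon^{E\times I\times\{+,-\}}$, so \emph{every} component -- both the inflows $f^+_{e,i}$ and the outflows $f^-_{e,i}$ -- vanishes almost everywhere outside $[0,\horizon]$.

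Next I would verify flow conservation until time~$\infty$. For almost all $\theta<\horizon$ this is already part of coherence until~$\horizon$. For $\theta\geq\horizon$, the left-hand side $\sum_{e\in\edgesFrom{v}}f^+_{e,i}(\theta)$ is zero because $f^+\in\rateFcts_\horizon^{E\times I}$; on the right-hand side, $u_{v,i}(\theta)=0$ by condition~(i) and $\sum_{e\in\edgesTo{v}}f^-_{e,i}(\theta)=0$ because $f^-\in\rateFcts_\horizon^{E\times I}$, so both sides vanish and the identity holds trivially. The same split handles consistency with~$\ROp$: coherence until~$\horizon$ hands us some $r\in\ROp(f)$ with $f^+_{e,i}(\theta)=r_{e,i}(\theta)\cdot\sum_{e'\in\edgesFrom{v}}f^+_{e',i}(\theta)$ for almost all $\theta<\horizon$, and for $\theta\geq\horizon$ both sides of this equation vanish since $f^+$ does; hence the same $r$ witnesses consistency with~$\ROp$ until~$\infty$.

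It remains to upgrade consistency with $\EdgeLoading$ from~$\horizon$ to~$\infty$, and this is exactly what condition~(ii) delivers: $f$ is a flow in $\flowSet_\horizon$ that is coherent until~$\horizon$ and, by the previous paragraph, fulfils flow conservation until~$\infty$, so~(ii) yields $f^-_{e,i}=\EdgeLoading(f^+)_{e,i}$ almost everywhere. Combining this with flow conservation and $\ROp$-consistency until~$\infty$, the flow $f$ is coherent until~$\infty$.

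For the sufficient condition stated at the end, assume every flow in $\rateFcts_\horizon^{E\times I\times\{+,-\}}$ that is coherent until~$\horizon$ and fulfils flow conservation until~$\infty$ terminates until~$\horizon$, and let $f$ be such a flow; I want to verify~(ii). By coherence until~$\horizon$, $f^-_{e,i}(\theta)=\EdgeLoading(f^+)_{e,i}(\theta)$ for almost all $\theta<\horizon$. For $\theta\geq\horizon$, $f^-_{e,i}(\theta)=0$ because $f\in\rateFcts_\horizon^{E\times I\times\{+,-\}}$, and $\EdgeLoading(f^+)_{e,i}(\theta)=0$ because $f$ terminates until~$\horizon$, i.e.\ $\mathrm{ess\ supp}(\EdgeLoading(f^+))\subseteq[0,\horizon]$; hence $f$ is consistent with $\EdgeLoading$ until~$\infty$, which is precisely~(ii). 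I do not expect a genuine obstacle here: the argument is entirely a bookkeeping exercise about supports, and the only point worth spelling out is that \Cref{thm:existence-finite-time-horizon} really produces a flow all of whose components -- not merely the edge inflows -- are supported in $[0,\horizon]$, which, together with hypothesis~(i) eliminating the network inflow beyond~$\horizon$, makes flow conservation and the two consistency conditions extend to all times for free.
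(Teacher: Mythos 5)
Your argument is correct and mirrors the paper's proof: invoke \Cref{thm:existence-finite-time-horizon} to get a flow in $\flowSet_\horizon$ coherent until $\horizon$, observe that flow conservation and $\ROp$-consistency extend trivially beyond $\horizon$ because all flow rates and (by condition~(i)) the network inflows vanish there, apply condition~(ii) to obtain $\EdgeLoading$-consistency for all times, and for the final sufficient condition deduce~(ii) from the termination hypothesis by splitting at $\horizon$. You merely spell out the "nothing flows after $\horizon$" step in more detail than the paper does, but the decomposition and key observations are identical.
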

}
\corExistenceTermination
\newcommand{\proofCorExistenceTermination}[1][Proof]{
	\begin{proof}[#1]
		\Cref{thm:existence-finite-time-horizon} gives us the existence of a flow $f \in \flowSet_{\horizon}$ that fulfils the desired properties until time~$\horizon$.
		Clearly, the flow also fulfils flow conservation and consistency with $\ROp$ after time $\horizon$ as nothing flows into or out of any node then.
		Hence, due to condition~\ref{condition:finite-consistency}, it is consistent with $\EdgeLoading$ until time $\infty$ as well.
		
		Assume now that every flow in $\flowSet_\horizon$ that is consistent with $\EdgeLoading$ and $\ROp$ until time $\horizon$ and fulfils flow conservation until time $\infty$ terminates until time $\horizon$.
		Let $f$ be a flow in $\flowSet_\horizon$ that is consistent with $\EdgeLoading$ and $\ROp$ until time $\horizon$ and fulfils flow conservation until time $\infty$.
		Then, $f$ terminates until time $\horizon$.
		Hence, both $f$ and $\EdgeLoading(f^+)$ vanish outside $[0,\horizon]$ and, therefore, $f$ is consistent with $\EdgeLoading$ until time $\infty$.
	\end{proof}
}
\proofCorExistenceTermination

\subsection{Uniqueness}\label{sec:uniqueness}

In many models, \coherent{} flows are not unique:
For example, neither dynamic Nash equilibria nor instantaneous dynamic equilibria (see \Cref{sec:deterministic-dpe}) are unique in general{} as the simple example of two parallel edges with the same flow-independent travel times shows.
However, for the linear-edge delay model and a class of prescriptive \revised{(\Cref{def:prescriptive})} and causal routing operators, \textcite[Theorem~3.4]{Bayen2019} state\footnote{While their proof is based on an incorrect lemma (see \ifnoappendix\Cref{EXT:sec:bayen-lemma} in the supplementary material\else\Cref{sec:bayen-lemma}\fi), 
we show here that the statement itself is correct and their idea to use the Banach Fixed-Point Theorem as underlying machinery is valid.} that if the routing operator fulfils a Lipschitz condition, the resulting flow is in fact unique.
We generalize this uniqueness result to abstract physical models and a more general class of prescriptive and causal routing operators.
Similar to the last subsection, we use an extension-based approach.

\revised{
Similar as for the extension-based \emph{existence} result in \Cref{lem:existence-using-ext-prop}, we here define the \emph{unique} extension property, which states that coherent extensions of a coherent flow with finite horizon are unique on some positive (but arbitrarily small) extension interval.
}

\begin{definition}
	A pair of an edge loading operator $\EdgeLoading$ and a routing operator $\ROp$ fulfils the \emph{\uniqueExtProp}, if for any two flows $f$ and $g$ that coincide until some time $\horizon$ and that are coherent until some later time $\horizon' > \horizon$,  there exists an $\alpha>0$ such that $f$ and $g$ coincide until time $\horizon + \alpha$.
\end{definition}

\revised{
	Compared to the \extProp{} (\Cref{def:ext-prop}), the \uniqueExtProp{} guarantees uniqueness of coherent extensions as opposed to existence of coherent extensions.
	An elementary proof shows that this property already implies uniqueness of coherent flows until $\horizon$, for any $\horizon\in\IRnnInf$:
}

\begin{lemma}\label{lem:unique-ext-prop-implies-uniqueness}
	Assume $\EdgeLoading$ and $\ROp$ fulfil the \uniqueExtProp.
	Then, for every $\horizon\in\IRnnInf$, any two flows that are \coherent{} until time $\horizon$ already coincide until time $\horizon$.
	
	In particular, there is at most one flow that is \coherent{} until $\infty$.
\end{lemma}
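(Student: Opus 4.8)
The plan is a straightforward ``connectedness over time'' argument via a supremum, entirely analogous to how one passes from local to global statements for ODEs. Fix $\horizon\in\IRnnInf$ and two flows $f,g$ that are \coherent{} until time $\horizon$, and consider the set
\[
	S \coloneqq \Set{ \tau\in\IRnn | \tau\le\horizon \text{ and } f,g \text{ coincide until time } \tau }.
\]
First I would observe that $0\in S$, since coinciding until time $0$ is the vacuous condition $\CharF_{[0,0)}\cdot f = \CharF_{[0,0)}\cdot g$, so $S\neq\emptyset$; and that $S$ is downward closed, because if $f,g$ coincide until $\tau'$ and $\tau\le\tau'$, then multiplying $\CharF_{[0,\tau')}\cdot f = \CharF_{[0,\tau')}\cdot g$ by $\CharF_{[0,\tau)}$ shows they coincide until $\tau$.

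Next I would set $\tau^*\coloneqq\sup S\in\IRnnInf$ and check that $f$ and $g$ in fact coincide until $\tau^*$ itself. For almost every $\theta<\tau^*$ there is some $\tau\in S$ with $\theta<\tau$ (by definition of the supremum if $\tau^*<\infty$, and because $S$ is unbounded if $\tau^*=\infty$), and for that $\tau$ one has $f(\theta)=g(\theta)$; hence $\CharF_{[0,\tau^*)}\cdot f = \CharF_{[0,\tau^*)}\cdot g$.

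It then only remains to exclude $\tau^*<\horizon$. In that case $\tau^*$ is finite, the flows coincide until $\tau^*$, and both are \coherent{} until the strictly later time $\horizon$, so the \uniqueExtProp{} (with the role of the later time played by $\horizon$) yields some $\alpha>0$ with $f,g$ coinciding until $\tau^*+\alpha$. Putting $\beta\coloneqq\min\set{\alpha,\horizon-\tau^*}>0$ and using downward closedness, $f$ and $g$ coincide until $\tau^*+\beta\le\horizon$, so $\tau^*+\beta\in S$ --- contradicting $\tau^*=\sup S$. Therefore $\tau^*=\horizon$, and by the previous step $f$ and $g$ coincide until $\horizon$, which is the first claim. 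Applying this with $\horizon=\infty$, any two flows \coherent{} until $\infty$ coincide until $\infty$; since every rate function vanishes on the negative half-line, coinciding until $\infty$ means they are equal as elements of $\flowSet$, so there is at most one \coherent{} flow until $\infty$.

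There is no real obstacle here; the argument is pure bookkeeping. The only two points that need a little care are verifying that the supremum is ``attained'' in the sense that $f$ and $g$ already coincide until $\tau^*$ (handled by the almost-everywhere argument above, using that coinciding until $\tau^*$ only constrains $f$ and $g$ on $[0,\tau^*)$), and noting that $\tau^*<\horizon$ forces $\tau^*$ to be finite, so that the \uniqueExtProp{} --- which speaks of flows coherent until a \emph{finite} time $\horizon$ and a strictly later time --- can legitimately be invoked.
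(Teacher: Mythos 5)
Your proof is correct and takes essentially the same approach as the paper: the paper defines $T \coloneqq \inf\{\theta : \CharF_{[0,\theta)}f \neq \CharF_{[0,\theta)}g\}$ and derives a contradiction from the unique extension property, which is the dual formulation of your supremum $\tau^* = \sup S$. Your write-up is a bit more careful about the ``attainment'' step (that $f,g$ already coincide until $\tau^*$, which needs a countable exhaustion $\tau_n\uparrow\tau^*$ since ``coincide until $\tau$'' is an a.e.\ condition), but the underlying argument is identical.
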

\newcommand{\proofLemUniqueExtPropImpliesUniqueness}[1][Proof]{
\begin{proof}[#1]
	Assume to the contrary that there are two flows $f$ and $g$ which are both coherent up to some time $\horizon'$ but do not coincide up to that time.
	We then have
	\[
		\horizon \coloneqq \inf\set{\theta \leq \horizon' | \CharF_{[0,\theta)}\cdot f \neq \CharF_{[0,\theta)} \cdot g} < \horizon'.
	\]
	Note that, we have $\horizon \geq 0$ as all flows trivially coincide until time~$0$. Now, $f$ and $g$ coincide until $\horizon$ and are coherent until $\horizon' > \horizon$.
	Hence, the \uniqueExtProp\ guarantees the existence of some $\alpha > 0$ such that $f$ and $g$ coincide until $\horizon+\alpha$. This, however, is then a contradiction to the choice of~$\horizon$.
\end{proof}
}
\proofLemUniqueExtPropImpliesUniqueness

If the operators are \emph{strictly} causal \revised{(\Cref{def:causal-edge-loading,def:causal-routing-operator}), \ie they only depend on the past evolution of the flow separated from the present by a positive time period,} and \revMinor{if} $\ROp$ is prescriptive, the \uniqueExtProp{} can be shown \revMinor{directly} without requiring a fixed-point theorem:
\newcommand{\lemmaUniqueExtensionIfStrictlyCausal}{
\begin{lemma}\label{lem:unique-extension-if-strictly-causal}
	Assume that $\EdgeLoading$ and $\ROp$ are strictly causal and that $\ROp$ is prescriptive. Then $\EdgeLoading$ and $\ROp$ fulfil the \uniqueExtProp.

	In particular, if, additionally, $\EdgeLoading$ is locally bounded, then there exists a unique flow that is \coherent{} until time $\infty$. 
\end{lemma}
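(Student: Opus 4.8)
The plan is to prove the \uniqueExtProp{} directly by a two-sided causality bootstrap, and then obtain the \emph{in particular} statement by assembling results already established. So let $f$ and $g$ be flows that coincide until some time $\horizon$ and that are both \coherent{} until some later time $\horizon' > \horizon$; the goal is to produce $\alpha > 0$ with $f$ and $g$ coinciding until $\horizon+\alpha$.

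First I would use strict causality twice to transfer the agreement from $\horizon$ to a strictly larger time. Since $f$ and $g$ coincide until $\horizon$, strict causality of $\ROp$ yields some $\alpha_1 > 0$ with $\CharF_{[0,\horizon+\alpha_1]}\cdot\ROp(f) = \CharF_{[0,\horizon+\alpha_1]}\cdot\ROp(g)$; because $\ROp$ is prescriptive these are the singletons $\{\CharF_{[0,\horizon+\alpha_1]}\cdot\ROe(f)\}$ and $\{\CharF_{[0,\horizon+\alpha_1]}\cdot\ROe(g)\}$, so $\ROe(f)$ and $\ROe(g)$ coincide until $\horizon+\alpha_1$. Likewise, since $f^+$ and $g^+$ coincide until $\horizon$, strict causality of $\EdgeLoading$ yields some $\alpha_2 > 0$ such that $\EdgeLoading(f^+)$ and $\EdgeLoading(g^+)$ coincide until $\horizon+\alpha_2$. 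I then set $\alpha \coloneqq \min\{\alpha_1, \alpha_2, \horizon'-\horizon\} > 0$, the last entry ensuring the consistency and flow-conservation identities are available on all of $[0,\horizon+\alpha]$.

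Next I would bootstrap from outflows to inflows on $[0,\horizon+\alpha]$. Consistency with $\EdgeLoading$ until $\horizon'$ gives $f^- = \EdgeLoading(f^+)$ and $g^- = \EdgeLoading(g^+)$ almost everywhere on $[0,\horizon+\alpha]$, and since the right-hand sides coincide there, $f^-$ and $g^-$ coincide until $\horizon+\alpha$. Combining flow conservation with consistency with $\ROp$, and using $\sum_{e\in\edgesFrom{v}}\ROe(f)_{e,i} = \CharF_{v\neq t_i}$ (so that the sink case $v = t_i$ is automatic), I obtain for every $e = vw$, every $i\in I$ and almost all $\theta < \horizon'$
\[
	f^+_{e,i}(\theta) = \ROe(f)_{e,i}(\theta)\cdot\Bigl(u_{v,i}(\theta) + \sum_{e'\in\edgesTo{v}} f^-_{e',i}(\theta)\Bigr),
\]
and the analogous identity for $g$. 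On $[0,\horizon+\alpha]$ the three ingredients $\ROe(f) = \ROe(g)$, $f^- = g^-$, and the common inflow rate $u$ agree, whence $f^+$ and $g^+$ coincide until $\horizon+\alpha$; together with $f^- = g^-$ there, this shows $f$ and $g$ coincide until $\horizon+\alpha$, which is the \uniqueExtProp. Finally, the \emph{in particular} claim follows by combining known pieces: prescriptivity makes every value of $\ROp$ non-empty, so under local boundedness of $\EdgeLoading$ the existence of a flow \coherent{} until $\infty$ is given by \Cref{prop:existence-if-strictly-causal}, while \Cref{lem:unique-ext-prop-implies-uniqueness} applied to the \uniqueExtProp{} just shown gives that there is at most one such flow.

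I do not anticipate a genuine obstacle; this is a short causality argument with no fixed-point machinery. The only two points that need care are (i) converting the set-valued equality $\CharF_{[0,\horizon+\alpha_1]}\cdot\ROp(f) = \CharF_{[0,\horizon+\alpha_1]}\cdot\ROp(g)$ into an equality of the selections $\ROe(f)$ and $\ROe(g)$ on $[0,\horizon+\alpha_1]$ — this is precisely where prescriptivity of $\ROp$ is used — and (ii) capping $\alpha$ by $\horizon'-\horizon$ so that the consistency and flow-conservation identities really do hold on the whole interval used in the bootstrap.
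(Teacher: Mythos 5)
Your proposal is correct and follows essentially the same argument as the paper: apply strict causality of $\EdgeLoading$ and $\ROp$ to obtain a common $\alpha>0$ on which both $\EdgeLoading(f^+)=\EdgeLoading(g^+)$ and $\ROe(f)=\ROe(g)$ hold, then bootstrap via the edge-loading consistency ($f^-=g^-$), flow conservation, and routing consistency to conclude $f^+=g^+$ on $[0,\min\{\horizon+\alpha,\horizon'\}]$, and finally invoke \Cref{lem:unique-ext-prop-implies-uniqueness} and \Cref{prop:existence-if-strictly-causal} for the "in particular" part. The only cosmetic difference is that you cap $\alpha$ by $\horizon'-\horizon$ at the outset, whereas the paper takes the minimum with $\horizon'$ at the end; both are fine.
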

}\lemmaUniqueExtensionIfStrictlyCausal \newcommand{\proofLemUniqueExtensionIfStrictlyCausal}[1][Proof]{
\begin{proof}[#1]
	Let $\horizon\in\IRp$ and let $f$ and $g$ be two flows coinciding until $\horizon$ and that are coherent until some later time $\horizon' > \horizon$.
	Let $\alpha$ be the minimum of the two values for $\alpha$ obtained by applying strict causality of $\EdgeLoading$ and $\ROp$.
	This implies $\CharF_{[0,\horizon+\alpha]}\cdot r(f) = \CharF_{[0,\horizon+\alpha]} \cdot r(g)$ and $\CharF_{[0,\horizon+\alpha]}\cdot \EdgeLoading(f) = \CharF_{[0,\horizon+\alpha]}\cdot \EdgeLoading(g)$.
	Then we have $g^-(\theta) = f^-(\theta)$ and, consequently, 
	\begin{align*}
		f_{e,i}^+(\theta)
		&= \ROe_{e,i}(f)(\theta) \cdot \CharF_{v\neq \dest_i} \cdot f_{v,i}^+(\theta)
		 = \ROe_{e,i}(g)(\theta) \cdot \CharF_{v\neq \dest_i} \cdot g_{v,i}^+(\theta)
		 = g_{e,i}^+(\theta)
	\end{align*}
	for almost all $\theta < \min\{\horizon+\alpha,\horizon'\}$ and all $e\in E$, $i\in I$.
	Thus, $f$ and $g$ coincide until~$\min\{\horizon+\alpha,\horizon'\} > \horizon$.
	
	{} The additional part of the \namecref{lem:unique-extension-if-strictly-causal} follows immediately by \Cref{lem:unique-ext-prop-implies-uniqueness} together with \Cref{prop:existence-if-strictly-causal}.
\end{proof}
}\proofLemUniqueExtensionIfStrictlyCausal If, however, our operators are only causal (but not strictly causal) we can instead require a stronger continuity property on the mapping $\Contr$ that maps (for some extension interval) inflow rates~$g^+$ to new inflow rates that are induced by the flow $(g^+, \EdgeLoading(g^+))$ and the routing operator~$\ROp$.
This allows us to apply the Banach Fixed-Point Theorem (\cite[Theorem~3.48]{InfiniteDimensionalAnalysis}) which guarantees the existence of a \emph{unique} fixed point.
Afterwards we will show that $\Contr$ satisfies this property whenever both the edge loading and routing operator satisfy a certain Lipschitz-continuity on small extension intervals.

\newcommand{\banachFixedPointTheorem}{
\begin{theorem}[Banach Fixed-Point Theorem]\label{thm:BanachFixedPoint}
	A map $\Contr: X \to X$ on a non-empty, complete metric space $(X,\mathrm d)$ has a unique fixed point $x^*$ if $\Contr$ is a \emph{contraction}, which means that there exists some $\kappa<1$ such that $\dist{\Contr(x)}{\Contr(y)} \leq \kappa\cdot \dist{x}{y}$ for all $x,y\in X$.
\end{theorem}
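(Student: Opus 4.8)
The plan is to run the classical Picard iteration and then invoke completeness. First I would fix an arbitrary $x_0\in X$ (which exists since $X$ is non-empty) and define a sequence recursively by $x_{n+1}\coloneqq\Contr(x_n)$ for $n\in\IN$. A one-line induction that applies the contraction inequality at each step gives $\dist{x_{n+1}}{x_n}\leq\kappa^n\cdot\dist{x_1}{x_0}$ for all $n$.

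Next I would show that $(x_n)_n$ is a Cauchy sequence. For $m<n$, the triangle inequality together with the previous bound yields $\dist{x_n}{x_m}\leq\sum_{k=m}^{n-1}\dist{x_{k+1}}{x_k}\leq\dist{x_1}{x_0}\cdot\sum_{k=m}^{\infty}\kappa^k=\frac{\kappa^m}{1-\kappa}\cdot\dist{x_1}{x_0}$, where the geometric series converges precisely because $\kappa<1$. Since $\kappa^m\to 0$ as $m\to\infty$, the right-hand side (which does not depend on $n$) tends to $0$, so $(x_n)_n$ is Cauchy; by completeness of $(X,\mathrm d)$ it converges to some $x^*\in X$.

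It then remains to verify that $x^*$ is a fixed point and that it is the only one. For the existence of the fixed point, I would use that $\Contr$ is continuous — it is Lipschitz with constant $\kappa$ — so that $\Contr(x^*)=\Contr(\lim_n x_n)=\lim_n\Contr(x_n)=\lim_n x_{n+1}=x^*$. For uniqueness, suppose $x^*$ and $y^*$ are both fixed points of $\Contr$; then $\dist{x^*}{y^*}=\dist{\Contr(x^*)}{\Contr(y^*)}\leq\kappa\cdot\dist{x^*}{y^*}$, hence $(1-\kappa)\cdot\dist{x^*}{y^*}\leq 0$, and since $1-\kappa>0$ this forces $\dist{x^*}{y^*}=0$, i.e.\ $x^*=y^*$.

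This is a textbook argument, so I do not anticipate a genuine obstacle; the only point that needs real care is the Cauchy estimate, where the tail bound on $\dist{x_n}{x_m}$ must be uniform in $n$ and must vanish as $m\to\infty$ — which is exactly what the geometric-series bound $\frac{\kappa^m}{1-\kappa}\dist{x_1}{x_0}$ delivers.
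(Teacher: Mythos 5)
Your proof is correct and is the standard Picard-iteration argument. The paper does not actually prove this statement: it is quoted as a known result with a citation to the literature (Aliprantis--Border, Theorem~3.48), so there is no ``paper proof'' to compare against. That said, the argument you give -- fix $x_0$, iterate $x_{n+1}\coloneqq\Contr(x_n)$, establish the geometric bound $\dist{x_{n+1}}{x_n}\leq\kappa^n\dist{x_1}{x_0}$, deduce the Cauchy property via the tail estimate $\dist{x_n}{x_m}\leq\frac{\kappa^m}{1-\kappa}\dist{x_1}{x_0}$, pass to the limit $x^*$ by completeness, use Lipschitz continuity of $\Contr$ to conclude $\Contr(x^*)=x^*$, and derive uniqueness from $(1-\kappa)\dist{x^*}{y^*}\leq 0$ -- is exactly the textbook proof, with every step justified and no gaps.
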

}
\banachFixedPointTheorem

\newcommand{\LemmaUniqueExtIFContraction}{
\begin{lemma}\label{thm:uniqueExtIfContraction}
	For a pair $(\EdgeLoading, \ROp)$ of a causal edge loading operator and a prescriptive and causal routing operator,
	we introduce the following so-called \emph{contraction property}~\ref*{contraction-prop}:
	{
	\begin{enumerate}[label=(C)]
		\item\label{contraction-prop} For any \coherent{} flow $f$ until $\horizon$, there exists an $\alpha> 0$ such that the following map is a contraction \revised{\wrt $\normFct_p$}:
	\end{enumerate}}\begin{align*}
		\Contr_{\horizon,\alpha}^f :
		\Omega^+_{\horizon,\alpha}(f^+) \to \Omega^+_{\horizon,\alpha}(f^+),
		~
		\Contr_{\horizon,\alpha}^f(g^+)_{e,i} \coloneqq \CharF_{[0,\horizon+\alpha]}\cdot \ROe_{e,i}(g^+, \EdgeLoading(g^+)) \cdot (u_{v,i} + \sum_{e'\in\edgesTo{v}} \EdgeLoading(g^+)_{e',i}),
	\end{align*}
	\revised{where $\Omega^+_{\horizon,\alpha}(f^+)$, as defined in~\eqref{eq:extension-inflows}, is the set of extensions of $f^+$ until $T+\alpha$.}

	If \ref{contraction-prop} is fulfilled, then $\EdgeLoading$ and $\ROp$ satisfy both the \uniqueExtProp{} and the \extProp.
	In particular, if $\EdgeLoading$ is additionally locally bounded, there exists a unique flow that is \coherent{} until time $\infty$.
	{}
\end{lemma}
}
\LemmaUniqueExtIFContraction
\newcommand{\proofThmUniqueExtIfContraction}[1][Proof]{
\begin{proof}[#1]
	Note that the mapping $\Contr_{\horizon,\alpha}^f$ is well-defined for every $\alpha > 0$ and flow $f$ that is coherent until time $\horizon$ due to the causality of $\EdgeLoading$ and $\ROp$.
	Therefore, for any such $f$, there exists an $\alpha$ such that the contraction $\Contr_{\horizon,\alpha}^f$ admits a unique fixed point by the Banach Fixed-Point Theorem.

	We first show the \uniqueExtProp.
	Let $f$ and $g$ be two flows that coincide until time $\horizon$ and that are coherent until $\horizon' > \horizon$.
	Let $\alpha\in\IRp$ be given by the contraction property~\revMinor{\ref*{contraction-prop}} (\wrt $f$ and $\horizon$).
	Then, for any $\alpha'\in(0,\alpha]$, the mapping $\Contr_{\horizon,\alpha'}^f$ is a contraction \revMinor{as well}.
	Thus, we may assume $\alpha \leq \horizon' - \horizon$.
	Clearly, both $\CharF_{[0,\horizon+\alpha]}\cdot g^+$ and $\CharF_{[0,\horizon+\alpha]}\cdot f^+$ are in $\Omega^+_{\horizon,\alpha}(f^+)$ and, as both $f$ and $g$ are coherent until $\horizon+\alpha$, they are both fixed points of $\Contr_{\horizon,\alpha}^f$.
	Since the fixed point is unique, $f$ and $g$ must coincide until time $\horizon + \alpha$.

	We now show the \extProp.
	Let $\horizon\in\IRnn$ and let $f$ be a flow that is \coherent{} until time $\horizon$.
	Let $\alpha$ be given as described in the contraction property and let $g^+$ be a fixed point of $\Contr$.
	Then, $g\coloneqq (g^+, \EdgeLoading(g^+))$ is \coherent{} until $\horizon + \alpha$ and coincides with $f$ until time $\horizon$ by the causality of $\EdgeLoading$ and $\ROp$. 
	
	The additional part of the \namecref{thm:uniqueExtIfContraction} now follows again immediately using \Cref{lem:unique-ext-prop-implies-uniqueness,prop:existence-if-strictly-causal}.{}
\end{proof}
}
	\proofThmUniqueExtIfContraction

\begin{theorem}\label{thm:unique-existence-for-lipschitz-operators}
	Let $\EdgeLoading$ be a causal edge loading operator and let $\ROp$ be a causal, prescriptive routing operator, and let all network inflow rates $u_{v,i}$ be contained in $\pLocInt[\infty][\IR]$ for $v\in V, i\in I$.
	\revMinor{If, for every time $\horizon \in \IRnn$ and every flow $f$ which is coherent until~$\horizon$,} there exists \revMinor{an} $\alpha > 0$ such that
	\begin{enumerate}[label=(\roman*)]
		\item\label{cond:EdgeLoading-essentially-bounded} the map $\restr{\EdgeLoading_{\horizon+\alpha}}{\Omega^+_{\horizon,\alpha}(f^+)}$ is essentially bounded, \ie $\sup_{g^+\in\Omega^+_{\horizon,\alpha}(f^+)} \norm{\EdgeLoading_{\horizon+\alpha}(g^+)}_\infty <\infty$, and
		\item\label{cond:EdgeLoading-Lipschitz} the map $\restr{\EdgeLoading_{\horizon+\alpha}}{\Omega^+_{\horizon,\alpha}(f^+)}$ is Lipschitz continuous \revMinor{\wrt $\normFct_p$}, and
		\item\label{cond:RoutingOperator-Lipschitz} the map $\restr{\ROe_{\horizon+\alpha}}{\Omega_{\horizon,\alpha}(f)}$ is Lipschitz continuous \wrt $(\Omega_{\horizon,\alpha}(f), \normFct_1)$ and $(\splitFcts_{\horizon+\alpha}^{E\times I}, \normFct_\revised{1})$,
	\end{enumerate}
	then $\EdgeLoading$ and $\ROp$ satisfy the contraction property \ref{contraction-prop}.

	In particular, if additionally $\EdgeLoading$ is locally bounded, there exists a unique flow that is \coherent{} until time $\infty$.
\end{theorem}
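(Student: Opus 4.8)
The plan is to show that the mapping $\Contr_{T,\alpha}^f$ from the contraction property \ref{contraction-prop} is indeed a contraction on $\Omega^+_{T,\alpha}(f^+)$, for a sufficiently small extension length $\alpha > 0$, and then to invoke \Cref{thm:uniqueExtIfContraction} to conclude. First I would fix a flow $f$ that is \coherent{} until time $T$ and let $\alpha_0 > 0$ be small enough that conditions \ref{cond:EdgeLoading-essentially-bounded}--\ref{cond:RoutingOperator-Lipschitz} all hold simultaneously on the extension interval $[T, T+\alpha_0]$ (take the minimum of the three values of $\alpha$). All subsequent estimates will be on intervals $[0, T+\alpha]$ with $\alpha \leq \alpha_0$, and the key observation is that on such a short interval the $p$-norm of a function supported on $[T, T+\alpha]$ is controlled by $\alpha^{1/p}$ times its $\infty$-norm, while its $1$-norm is controlled by $\alpha^{1-1/p}$ times its $p$-norm (Hölder); this is the mechanism that will make the Lipschitz constant shrink below $1$.

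The core estimate: given two inflow extensions $g^+, h^+ \in \Omega^+_{T,\alpha}(f^+)$, both agree with $f^+$ on $[0,T]$, so $\Contr_{T,\alpha}^f(g^+)$ and $\Contr_{T,\alpha}^f(h^+)$ also agree with $h^+ = \restr{f^+}{[0,T]}$-induced flow on $[0,T]$ by causality, and the difference is supported on $[T, T+\alpha]$. Expanding the definition of $\Contr$, the difference $\Contr_{T,\alpha}^f(g^+)_{e,i} - \Contr_{T,\alpha}^f(h^+)_{e,i}$ is a sum of two kinds of terms on $[T,T+\alpha]$: one where the node-inflow factor $(u_{v,i} + \sum_{e'}\EdgeLoading(g^+)_{e',i})$ changes while the routing factor $\ROe_{e,i}$ is held fixed, and one where the routing factor changes while the node-inflow factor is held fixed. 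For the first kind I would bound it using $\norm{\ROe_{e,i}}_\infty \leq 1$ (routing splits are in $[0,1]$) and condition \ref{cond:EdgeLoading-Lipschitz}: the $p$-norm of $\EdgeLoading(g^+) - \EdgeLoading(h^+)$ restricted to $[T,T+\alpha]$ is at most $L_\Phi \norm{g^+ - h^+}_p$. For the second kind I would bound the node-inflow factor in $\infty$-norm using conditions \ref{cond:EdgeLoading-essentially-bounded} and the essential boundedness of $u$, say by a constant $M$ independent of $\alpha \leq \alpha_0$, so that term is at most $M \cdot \norm{\restr{(\ROe(g^+,\EdgeLoading(g^+)) - \ROe(h^+,\EdgeLoading(h^+)))}{[T,T+\alpha]}}_p$. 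Now apply \ref{cond:RoutingOperator-Lipschitz}: this is at most $M L_{\ROp} \norm{(g^+,\EdgeLoading(g^+)) - (h^+,\EdgeLoading(h^+))}_1$, and by Hölder on $[T,T+\alpha]$ together with \ref{cond:EdgeLoading-Lipschitz} this is at most $M L_{\ROp} \cdot \alpha^{1-1/p}\cdot(1 + L_\Phi)\norm{g^+ - h^+}_p$ (the $\alpha^{1-1/p}$ comes from passing from the $p$-norm bound on each coordinate difference to the $1$-norm, which only picks up the short support). Collecting, $\norm{\Contr_{T,\alpha}^f(g^+) - \Contr_{T,\alpha}^f(h^+)}_p \leq \kappa(\alpha)\norm{g^+ - h^+}_p$ with $\kappa(\alpha) \to 0$ as $\alpha \to 0$; choosing $\alpha$ small enough that $\kappa(\alpha) < 1$ gives the contraction.

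The main obstacle I expect is bookkeeping the two different norms: the routing operator's Lipschitz condition is stated from the $1$-norm to the $p$-norm, whereas $\Contr$ lives in the $p$-norm, so one must carefully use the short support of the extension part to convert between them via Hölder, and must make sure the constant $M$ bounding the node-inflow factor really is uniform over $g^+ \in \Omega^+_{T,\alpha}(f^+)$ and over $\alpha \leq \alpha_0$ — this is exactly what \ref{cond:EdgeLoading-essentially-bounded} and $u \in \pLocInt[\infty][\IR]$ buy us. One also has to be slightly careful that the node-inflow factor appearing in $\Contr$ is nonneg­ative and that $\ROe$ being prescriptive means $\ROe_{e,i}(g^+,\EdgeLoading(g^+))$ is a well-defined single function with values in $[0,1]$, so that $\Contr_{T,\alpha}^f$ indeed maps $\Omega^+_{T,\alpha}(f^+)$ into itself (the output is non-negative, locally $p$-integrable, vanishes outside $[0,T+\alpha]$, and agrees with $f^+$ on $[0,T]$ by causality). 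Once the contraction property \ref{contraction-prop} is established, the ``in particular'' clause — a unique \coherent{} flow until $\infty$ under the additional local boundedness of $\EdgeLoading$ — follows verbatim from the corresponding clause of \Cref{thm:uniqueExtIfContraction}.
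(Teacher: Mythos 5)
Your overall strategy matches the paper's: verify the contraction property for $\Contr^f_{T,\tilde\alpha}$ with small $\tilde\alpha$ and invoke \Cref{thm:uniqueExtIfContraction}. In one respect you are more careful than the paper: the paper's chain of inequalities bounds $\norm{\Contr_{e,i}(f^+)-\Contr_{e,i}(g^+)}_p$ directly by $M$ times the variation of the routing factor $\ROe$ alone, which only accounts for what you call the ``second kind'' of term. Your product-rule decomposition into two terms (node-inflow factor changing vs.\ routing factor changing) is the honest expansion of the difference.

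But there is a genuine gap in your conclusion. Your bound on the first kind of term is $L_\Phi\,\norm{g^+-h^+}_p$ with \emph{no} $\alpha$-dependence, since condition~\ref{cond:EdgeLoading-Lipschitz} is a $p$-norm-to-$p$-norm Lipschitz estimate that gains nothing as the extension window shrinks. Consequently the collected constant is $\kappa(\alpha) = L_\Phi + M L_{\ROp}(1+L_\Phi)\,\alpha^{1-1/p}$, which converges to $L_\Phi$, not to $0$, as $\alpha\to 0$. Unless $L_\Phi < 1$ --- which the theorem does not assume --- this is not a contraction constant, and the Banach fixed-point argument does not close. The short-interval Hölder trick is exactly what you use on the second term, where the routing operator is Lipschitz \emph{from} $\norm{\emptyarg}_1$, so passing $p\to 1$ on $[T,T+\tilde\alpha]$ buys an $\alpha^{1/q}$ factor; for the first term there is no $1$-norm on the receiving end to exploit, and combining the essential boundedness~\ref{cond:EdgeLoading-essentially-bounded} with Hölder only yields a sub-Lipschitz (Hölder-$1/p$) bound $\norm{\EdgeLoading(g^+)-\EdgeLoading(h^+)}_p\lesssim\alpha^{1/(pq)}\norm{g^+-h^+}_p^{1/p}$, not a Lipschitz one. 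You would need an additional structural hypothesis (for instance uniform strict causality of $\EdgeLoading$, under which $\EdgeLoading(g^+)$ is constant in $g^+$ on $[0,T+\tilde\alpha]$ for small enough $\tilde\alpha$ and your first-kind term vanishes outright) or a different argument for that term; as written, the estimate does not establish $\kappa(\alpha)\to 0$, and in fact the paper's own proof appears to drop this term silently, which is worth scrutinizing independently.
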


Note that the conditions \ref{cond:EdgeLoading-Lipschitz} and \ref{cond:RoutingOperator-Lipschitz} of \Cref{thm:unique-existence-for-lipschitz-operators} are trivially fulfilled if $\EdgeLoading$ and $\ROp$, respectively, are uniformly strictly causal since, for small enough~$\alpha$, the mappings $\restr{\EdgeLoading_{\horizon+\alpha}}{\Omega^+_{\horizon,\alpha}(f^+)}$ and $\restr{\ROe_{\horizon+\alpha}}{\Omega_{\horizon,\alpha}(f)}$, respectively, are then constant \revMinor{on the restricted domains}.

\newcommand{\proofThmUniqueExistenceForLipschitzOperators}[1][Proof]{
\begin{proof}[#1]
	\revMinor{Given a coherent flow~$f$ until~$T$, l}et $\alpha$ be given as in the \revMinor{\namecref{thm:unique-existence-for-lipschitz-operators}} assumptions and choose $M\in\IRp$ such that $M$ is an upper bound of $\smallnorm{\CharF_{[0,\horizon+\alpha]}\cdot (u_{v,i} + \sum_{e \in \edgesFrom{v}}\EdgeLoading(g^+)_{e,i})}_\infty$ for all $g^+\in \Omega^+_{\horizon+\alpha}(f)$ and $v \in V$, $i \in I$.
	Let $L_1$ and $L_2$ be Lipschitz constants of $\restr{\EdgeLoading_{\horizon+\alpha}}{\Omega^+_{\horizon,\alpha}(f^+)}$ and $\restr{\ROe_{\horizon+\alpha}}{\Omega_{\horizon,\alpha}(f)}$, respectively.
	Without loss of generality, assume $L_1 \geq 1$, $L_2 > 0$.

	We prove that $\Contr\coloneqq \Contr^f_{\horizon,\tilde\alpha}$ is a contraction \wrt any fixed $\tilde\alpha\leq\alpha$ satisfying $\tilde\alpha^q < (M\cdot L_1\cdot L_2^\revised{1/p})^{-1}$ where $q$ is the conjugate of $p$ with $\nicefrac{1}{p}+\nicefrac{1}{q}=1$.
	Let $h^+$ and $g^+$ be arbitrary in $\Omega^+_{\horizon+\tilde\alpha}(f)$.
	We deduce
	\begin{align*}
		\norm{\Contr_{e,i}(h^+) - \Contr_{e,i}(g^+)}_p
		&\leq M\cdot \norm{
			\CharF_{[0,\horizon+\tilde\alpha]}\cdot (
				r_{e,i}(h^+,\EdgeLoading(h^+)) - r_{e,i}(g^+, \EdgeLoading(g^+))
			)
		}_p
		\\
		& = M\cdot \lVert
			\ROe_{\horizon+\tilde\alpha}(h^+, \EdgeLoading(h^+))_{e,i}
			- \ROe_{\horizon+\tilde\alpha}(g^+, \EdgeLoading(g^+))_{e,i}
		\rVert_p
		\\
		& \revised{\leq M\cdot \lVert
			\ROe_{\horizon+\tilde\alpha}(h^+, \EdgeLoading(h^+))_{e,i}
			- \ROe_{\horizon+\tilde\alpha}(g^+, \EdgeLoading(g^+))_{e,i}
		\rVert_1^{1/p},}
	\end{align*}
	\revised{where the last inequality uses that the range of $\ROe_{T+\tilde\alpha}(g)$ is contained in $[0,1]$ for all $g$ (\cf \Cref{prop:p-integrable-bounded-codomain}).}
	Applying the Lipschitz continuity of $\restr{\ROe_{\horizon+\alpha}}{\Omega_{\horizon,\alpha}(f)}$ yields
	\begin{align*}
		\norm{\Contr_{e,i}(h^+) - \Contr_{e,i}(g^+)}_p
		&\leq
		M\cdot L_2^\revised{1/p} \cdot\max( \norm{h^+ - g^+}_1, \norm{\EdgeLoading_{\horizon+\tilde\alpha}(h^+) - \EdgeLoading_{\horizon+\tilde\alpha}(g^+)}_1 ).
	\end{align*}
	As $h$ and $g$ coincide until time $\horizon$ and as $\EdgeLoading$ is causal, it suffices to only consider the interval $[\horizon,\horizon+\tilde\alpha]$ for the norms.
	By applying Hölder's inequality, we obtain
	\begin{align*}
		\norm{\Contr_{e,i}(h^+) - \Contr_{e,i}(g^+)}_p
		& \leq 
		M\cdot L_2^\revised{1/p} \cdot \tilde\alpha^q \cdot \max(
			\norm{h^+ - g^+}_p,
			\norm{\EdgeLoading_{\horizon+\tilde\alpha}(h^+) - \EdgeLoading_{\horizon+\tilde\alpha}(g^+)}_p
		).
	\end{align*}
	Finally, with the Lipschitz-continuity of $\EdgeLoading_{\horizon+\tilde\alpha}$ (and $L_1\geq 1$) we obtain
	\begin{equation*}
		\norm{\Contr_{e,i}(h^+) - \Contr_{e,i}(g^+)}_p
		 \leq  
		M\cdot L_1 \cdot L_2^\revised{1/p} \cdot \tilde\alpha^q \cdot \norm{h^+ - g^+}_p
	\end{equation*}
	Since we have $M\cdot L_1\cdot L_2^\revised{1/p}\cdot \tilde\alpha^q < 1$, this shows that $\Contr$ is a contraction on $\Omega^+_{\horizon,\alpha}(f^+)$.
	
	The additional part of the \namecref{thm:unique-existence-for-lipschitz-operators} follows directly from \Cref{thm:uniqueExtIfContraction}.{}
\end{proof}
}
\proofThmUniqueExistenceForLipschitzOperators[Proof of \Cref{thm:unique-existence-for-lipschitz-operators}]

\section{Applications}\label{sec:Applications}

In this section, we illustrate how our general existence and uniqueness results can be applied to various concrete models from the literature.
We first present two widely used physical models in \Cref{sec:physical-model-examples} and then discuss in \Cref{sec:behavioral-model-examples} how several well-known behavioral models from the literature fit into the framework of routing operators.
We show how our results can be used to obtain existence and uniqueness results for various classes of dynamic flows.

\subsection{Physical Models}\label{sec:physical-model-examples}

The two physical models, which we introduce as examples here, are \revMinor{Vickrey's queuing} model and the affine-linear volume delays.
As we will show, these two physical models fulfill all the required properties necessary to apply our results.

\subsubsection{Vickrey's Queuing Model}\label{sec:vickrey}

In \emph{Vickrey's queuing model}, as proposed by \textcite{VickreyQueuingModel}, sometimes also called the \emph{deterministic fluid queuing model},
every edge $e\in E$ is assigned a \emph{capacity} $\capa_e\in\IRp$ as well as a \emph{free-flow travel time} $\ffttime_e\in\IRnn$.
Whenever the inflow into an edge exceeds the capacity, a point queue builds up in front of the edge.
Particles entering an edge must first wait in this queue, which is processed at rate $\capa_e$, before they can traverse the edge which then takes another $\ffttime_e$ time units (see \Cref{fig:VickreyQueue} for a graphical depiction).

\begin{figure}[ht]
	\centering
	\begin{tikzpicture}
	\node[vertex](v)at(0,0){$v$};
	\node[vertex](w)at(4,0){$w$};

\draw[flowCol,line width=10pt]($(v)+(.55,.1)$)--++(0,1);
	\draw[flowCol,line width=10pt]($(v)+(.8,0)$)--(w);
	\draw[thick]($(v)+(.8,5.4pt)$) -- node[above]{$e=vw$} ($(w.west)+(0,5.4pt)$) ($(v)+(.8,-5.4pt)$) -- ($(w.west)+(0,-5.4pt)$);
	\draw[edge] (v)--(w);
	\draw[<->]($(v)!.5!(w)+(0,-5pt)$) --node[right=2pt,fill=flowCol,inner sep=1pt]{$\nu_e$} ++(0,10pt);

\draw[<->]($(v)+(.8,-10pt)$) --node[below]{$c_e^0$} ($(w.west)+(0,-10pt)$);
	\draw[<->]($(v)+(.55,.1)$)-- node[left=2pt]{$z_e$}++(0,1);

\draw[flowCol,line width=16pt]($(v)+(-.8,0)$)--(v);
	\draw[thick]($(v)+(-.8,8.4pt)$) -- ($(v.west)+(0,8.4pt)$) ($(v)+(-.8,-8.4pt)$) -- ($(v.west)+(0,-8.4pt)$);
	\draw[edge]($(v)+(-.8,0)$)--(v);
	\draw[thick,dotted]($(v)+(-.8,8.4pt)$) -- ++(-.45,0);
	\draw[thick,dotted]($(v)+(-.8,-8.4pt)$) -- ++(-.45,0);
	\draw[thick,dotted]($(v)+(-.8,0)$) -- ++(-.45,0);

\draw[flowCol,line width=10pt](w)--++(.8,0);
	\draw[thick]($(w.east)+(0,8.4pt)$) -- ($(w)+(.8,8.4pt)$) ($(w.east)+(0,-8.4pt)$) -- ($(w)+(.8,-8.4pt)$);
	\draw[thick](w)--++(.8,0);
	\draw[thick,dotted]($(w)+(.8,8.4pt)$)--++(.45,0);
	\draw[thick,dotted]($(w)+(.8,-8.4pt)$)--++(.45,0);
	\draw[edge,dotted]($(w)+(.8,0)$)--++(.45,0);
\end{tikzpicture}
 	\caption{A graphical depiction of an edge $e=vw$ with a Vickrey queue of length $z_e$ at its start.}\label{fig:VickreyQueue}
\end{figure}

More formally, for a flow $f$, the \emph{queue length} $\qLen_e$ at edge $e$ is defined as
 \[
	\qLen_e(\theta, f)\coloneqq \int_0^\theta f^+_e \diff\leb - \int_0^{\theta + \ffttime_e}  f^-_e \diff\leb,
\]
where $f_e^+\coloneqq \sum_{i\in I} f_{e,i}^+$ and $f_e^-\coloneqq \sum_{i\in I} f_{e,i}^-$ denote the aggregate in- and outflow rate, respectively.
The travel time for a particle entering at time $\theta$ is then given by \[
	\ttime_e(\theta, f) \coloneqq \frac{\qLen_e(\theta, f)}{\capa_e} + \ffttime_e,
\]
and its exit time is $\exitTime_e(\theta, f)\coloneqq \theta + \ttime_e(\theta, f)$.

A flow $f$ is called a \emph{Vickrey flow} until time $\horizon$ if it fulfils the following two conditions:
First, its queue \emph{operates at capacity} until $\horizon$, \ie for almost all $\theta < \horizon$ we have \[
	f_e^-(\theta + \ffttime_e) = \begin{cases}
		\capa_e, &\text{if $\qLen_e(\theta, f) > 0$}, \\
		\min\{ f_e^+(\theta), \capa_e \}, &\text{otherwise}.
	\end{cases}
\]
Second, its queue \emph{operates fairly} until $\horizon$, \ie for almost all $\theta < \horizon$ we have \[
	f_{e,i}^-(\theta + \ffttime_e) = \begin{cases}
		f_e^-(\theta + \ffttime_e) \cdot \frac{f_{e,i}^+(\xi_\theta)}{f_e^+(\xi_\theta)}, &\text{if $f_e^+(\xi_\theta) > 0$}, \\
		0, &\text{otherwise},
	\end{cases}
\]
where $\xi_\theta$ is chosen such that $\xi_\theta + c_e(\xi_\theta) = \theta + \ffttime_e$.

It is a well-known result that for any given set of edge inflow rates $f^+\in\rateFcts^{E\times I}$ there exists a unique set of outflow rates $f^-\in\rateFcts^{E\times I}$ such that $(f^+, f^-)$ is a Vickrey flow, and moreover, this mapping is causal (\cf \citealt[Corollaries~3.43~and~3.46]{GrafThesis}):
\begin{lemma}\label{lem:Vickrey-properties}
	For any inflow rates vector $f^+\in\rateFcts^{E\times I}$, there exists a unique vector $f^-\in\rateFcts^{E\times I}$ such that $(f^+, f^-)$ is a Vickrey flow.
	Then, \revMinor{the resulting edge loading operator} $\EdgeLoading$ is causal and, if all free-flow travel times are strictly positive, even uniformly strictly causal.
\end{lemma}

Clearly, due to the capacity constraints, $\EdgeLoading$ is also locally bounded.
Moreover, it is a well-known result that $\EdgeLoading_T$ is sequentially weak-weak continuous for every $\horizon\in\IRnn$ even if we allow $\ffttime_e=0$.

\begin{lemma}[{\citealt[Corollaries~3.45,~3.46]{GrafThesis}}]
	For the edge loading operator $\EdgeLoading$ of Vickrey's model, $\EdgeLoading_\horizon$ is sequentially weak-weak continuous \revMinor{\wrt $\normFct_p$}.
	Furthermore, the induced travel time function $f\mapsto \ttime_e(\emptyarg, f)$ is sequentially weak-strong continuous \wrt $(\flowSet_\horizon,\revMinor{\normFct_p})$ and $(C([0,\horizon],\IRnn), \revMinor{\normFct_\infty})$ for arbitrary $\horizon\in\IRnn$.
\end{lemma}

\subsubsection{Affine-Linear Volume-Delay Dynamics}

A closely related and also frequently used physical model is that of \emph{affine-linear volume-based edge dynamics}.
This model is studied in more detail, e.g., by \textcite{ZhuM00} and also used by \textcite{Bayen2019}.
Here, the travel time on a link depends affine-linearly on the flow volume on the edge at the time the edge is entered.

More specifically, for each edge $e\in E$, we are given a positive free-flow travel time $\ffttime_e \in\IRp$ and a positive capacity $\capa_e\in\IRp$.
The travel time on $e$ at time $\theta$ is then given by \begin{equation}\label{eq:affine-linear-volume-delay}
	\ttime_e(\theta, f) \coloneqq \ffttime_e + \frac{X_e(\theta, f)}{\capa_e},
\end{equation}
where $\flowVol_e(\theta, f)\coloneqq \int_0^\theta f_e^+ \diff\leb -  \int_0^\theta f_e^-\diff\leb$ is the flow volume on edge $e$ at time $\theta$.
The exit time when entering $e$ at time $\theta$ is again given by $\exitTime_e(\theta, f)\coloneqq \theta + \ttime_e(\theta, f)$.
Note that, for any flow $f$, both $\ttime_e(\emptyarg, f)$ and $\exitTime_e(\emptyarg, f)$ are absolutely continuous and thus differentiable almost everywhere.

\begin{definition}\label{def:respects-affine}
	We say that a flow $f$ \emph{respects affine-linear travel times} until time~$\horizon$ if 
	\[
		\int_0^\theta f_{e,i}^+ \diff\leb = \int_0^{\exitTime_e(\theta,f)} f_{e,i}^-\diff\leb
	\]
	holds for all $\theta < \horizon$ with $\exitTime_e(\theta, f)\coloneqq \theta+\ttime_e(\theta, f)$ where $\ttime_e$ is given by~\eqref{eq:affine-linear-volume-delay}.
\end{definition}

\begin{remark}
	Note that for \revMinor{a flow $f$ that respects affine-linear travel times} and an edge $e$, the exit time function $\exitTime_e(\emptyarg,f)$ is strictly increasing on $(-\infty, \horizon)$ and therefore the inverse function $\exitTime_e(\emptyarg, f)^{-1}$ of its restriction $\exitTime_e(\emptyarg, f): (-\infty, \horizon) \to (-\infty, \exitTime_e(\horizon, f))$ exists, and both functions are differentiable with positive derivative almost everywhere on their respective domains.
	Further, a flow $f$ respects travel times until $\horizon$ if and only if $f_{e,i}^+ (\theta) = f_{e,i}^-(\exitTime_e(\theta, f))\cdot (\exitTime_e(\emptyarg, f))'(\theta)$ holds for almost all $\theta< \horizon$.
	This means that $f$ respects travel times until $\horizon$ if and only if $\exitTime_e(\emptyarg, f)$ is strictly increasing on $(-\infty, \horizon)$ and \begin{align*}
		f_{e,i}^-(\theta)
		&
		= f_{e,i}^+(\exitTime_e(\emptyarg, f)^{-1}(\theta)) \cdot (\exitTime_e(\emptyarg, f)^{-1})'(\theta)
	\end{align*}
	holds for all $\theta < \exitTime_e(\horizon, f)$.

	Therefore, given some edge inflow rate vector $f^+\in\rateFcts^{E\times I}$, the set of outflow rate vectors $f^-\in\rateFcts^{E\times I}$ such that $f=(f^+, f^-)$  respects these travel times (until $\infty$) correspond one-to-one to the function vectors $(X_{e,i})_{e,i}: \IR \to\IR^{E\times I}$ for which $\exitTime_e[X] : \theta \mapsto \theta + \ffttime_e + \frac{1}{\capa_e}\cdot \sum_{i\in I} X_{e,i}(\theta)$ is strictly increasing and which solve the following system of delay differential equations (DDE) for all $e\in E$, $i\in I$ and almost all $\theta\in\IR$:
	\begin{align*}
		\flowVol_{e,i}'(\theta) &= f_{e,i}^+(\theta) - f_{e,i}^-[\flowVol](\theta) \\
		f_{e,i}^-[\flowVol](\theta) &\coloneqq f_{e,i}^+( \exitTime_e[\flowVol]^{-1}(\theta) ) \cdot  (\exitTime_e[\flowVol]^{-1})'( \theta ) \\
		\flowVol_{e,i}(0)           &= 0
	\end{align*}

	This system of DDE can be solved in a stepwise fashion (as was done in~\citealt[Theorem~2.9]{Bayen2019}, and \citealt[Theorem~2.2]{ZhuM00}) which yields the following theorem.
\end{remark}

\begin{theorem}\label{thm:linear-edge-delay-well-defined}
	For each edge inflow rate vector $f^+\in\rateFcts^{E\times I}$, there exists a unique outflow rate vector $f^-\in\rateFcts^{E\times I}$ such that the flow $(f^+, f^-)$ respects \revMinor{affine-linear} travel times \revMinor{(\Cref{def:respects-affine})}.
	Hence, the \revMinor{edge loading operator} $\EdgeLoading$ that maps inflow rates to such outflow rates is well-defined.
\end{theorem}

By analysing the proof of \Cref{thm:linear-edge-delay-well-defined}, we can extract the following causality property:
\begin{proposition}\label{prop:uniqueness-for-continuity}
	Let $f_{e,\emptyarg}=(f^+_{e,\emptyarg}, f^-_{e,\emptyarg})$ and $g_{e,\emptyarg}=(g^+_{e,\emptyarg},g^-_{e,\emptyarg})$ be two vectors in $\rateFcts^{I\times\{+,-\}}$ that respect affine-linear travel times until time $\horizon$ \revMinor{(\Cref{def:respects-affine})}.
	If $f^+_{e,\emptyarg}$ and $g^+_{e,\emptyarg}$ coincide until $\horizon$, then $f^-_{e,\emptyarg}$ and $g^-_{e,\emptyarg}$ coincide until $\tau_e(\horizon, f) = \tau_e(\horizon, g) \geq \horizon + \ffttime$.

	In particular, $\EdgeLoading$ is uniformly strictly causal.
\end{proposition}

We continue by showing that $\EdgeLoading_\horizon$ is sequentially weak-weak continuous \revMinor{\wrt{} $\normFct_p$}.
Note that a similar continuity result was presented in \cite[Theorem~3.2]{ZhuM00} where the edge inflow rates are assumed to be essentially bounded and instead of weak convergence, a convergence in an integral sense is used.

\begin{theorem}
	Let $\EdgeLoading$ denote the edge loading operator \revMinor{from \Cref{thm:linear-edge-delay-well-defined}} induced by a set of affine-linear volume-delay functions.
	Then, for every $\horizon\in\IRnn$, $\EdgeLoading_\horizon$ is sequentially weak-weak continuous \revMinor{\wrt $\normFct_p$}.
	Furthermore, the map $f\mapsto \ttime_e(\emptyarg, f)$ is sequentially weak-strong continuous \wrt $(\flowSet_\horizon, \revMinor{\normFct_p})$ and $(C([0,\horizon],\IRnn), \revMinor{\normFct_\infty})$.
\end{theorem}
\begin{proof}
	We show the sequential weak-weak continuity for $(f^+_{e,\emptyarg})\mapsto (\EdgeLoading_\horizon(f^+))_{e,\emptyarg}$ for each edge $e\in E$ separately.
	To reduce noise, we will omit the edge index $e$ for all relevant functions and constants.

	Let $(f^{n,+})_n$ be a sequence of vectors in $\rateFcts_\horizon^I$ that converges weakly to $f^+$.
	We show that $f^{n,-}\coloneqq\EdgeLoading_\horizon(f^{n,+})$ converges weakly to $f^-\coloneqq \EdgeLoading_\horizon(f^+)$.
	Note that all $f^{n,-}$ are contained in the weakly compact set $\set{ g\in\rateFcts_\horizon^{I} | g_{i} \leq \capa_e}$.
	Thus, the sequence $f^{n,-}$ has a weakly convergent subsequence, and we pass to this subsequence with weak limit $f^{*,-}$.
	Let $f^n$ denote the flow $(f^{n,+}, f^{n,-})$ and let $f^*$ denote the flow $(f^+, f^{*,-})$.
	
	Because the operator $J: L^p([0,\horizon]) \to C([0,\horizon]), f\mapsto (\theta\mapsto\int_0^\theta f\diff\leb)$ is compact, the cumulative flow rate functions $F_i^{n,+}: \theta\mapsto \int_0^\theta f_i^{n,+}\diff\leb$ and $F_i^{n,-}: \theta\mapsto \int_0^\theta f_i^{n,-}\diff\leb$ converge uniformly as functions in $C(\IR)$.
	Therefore, also the sequence of edge volume functions $X_i(\emptyarg, f^n)$ and the sequences of travel time functions $\ttime(\emptyarg, f^n)$ and exit time functions $\exitTime(\emptyarg, f^n)$ converge uniformly to $X(\emptyarg, f^*)$, $\ttime(\emptyarg, f^*)$ and $\exitTime(\emptyarg, f^*)$, respectively, in $C(\IR)$.
	Thus, the composition $F^{n,-}\circ \exitTime(\emptyarg, f^n)$ converges pointwise to $F^{*,-}\circ \exitTime(\emptyarg, f^*)$.
	
	Note that the function $\exitTime(\emptyarg, f^n)$ coincides with $\exitTime(\emptyarg, (f^{n,+}, \EdgeLoading(f^{n,+})))$ on $(-\infty, \horizon]$.
	Therefore, $\exitTime(\emptyarg, f^n)$ is invertible as a function from $(-\infty, \horizon]$ to $(-\infty, \exitTime(\horizon, f^n)]$ and $f^n$ respects travel times until time $\exitTime(\emptyarg, f^n)^{-1}(\horizon)$.
	We define $\horizon'\coloneqq \min\set{ \theta | \exitTime(\theta,f^*) = \horizon}$ and let $\theta < \horizon'$.
	Then, for large enough $n$, we have $\exitTime(\theta, f^{n}) < \horizon$, and we can deduce $F_{i}^{+,n}(\theta) = F_{i}^{-,n}(\exitTime(\theta, f^n))$.
	In the limit, this shows that $f^*$ respects travel times until $\horizon'$.
	From \Cref{prop:uniqueness-for-continuity} it follows that $f^{*,-}$ and $f^-$ must coincide on $(-\infty, \horizon]$.
\end{proof}

\subsection{Behavioral Models}\label{sec:behavioral-model-examples}

We continue by discussing how known behavioral models from the literature fit into our framework and how our existence and uniqueness results apply to them.
We begin with deterministic prediction equilibria, which subsume several well-known equilibrium concepts such as dynamic Nash equilibria (also known as Nash flows over time) and instantaneous dynamic equilibria.
Then, we discuss the applications to prescriptive routing models such as those introduced by \textcite{Bayen2019}.

\subsubsection{Deterministic Prediction Equilibrium}\label{sec:deterministic-dpe}

In a deterministic prediction equilibrium, particles adaptively choose their path by minimizing the predicted travel cost.
Here, all particles of a commodity are assumed to use a common travel cost predictor which may depend on the (past and/or future) evolution of the flow.
We reintroduce the model proposed in~\cite{PredictionEquilibria}, and show that it fits into the concept of routing operators.

For each pair of nodes $v,w\in V$, we denote the finite set of simple $v$-$w$-paths by $\PathSet_{v,w}$.
Each commodity is associated with a \emph{cost predictor} $\pCost_{i,\Path}: \IR \times \flowSet \to \IRnn$ for each simple path $p$ where $\pCost_{i,\Path}(\theta, f)$ is the predicted cost of traversing path $\Path$ when entering the path at time~$\theta$ as predicted at the same time $\theta$ using the flow $f$.
We assume that $\pCost_{i,\Path}(\emptyarg, f)$ is measurable for any cost predictor $\pCost_{i,\Path}$ and flow~$f$.

For a commodity $i\in I$, an edge $e=vw$ with $v\neq \dest_i$ is called \emph{active at time $\theta$} for commodity~$i$, if $e$ is the first edge of an optimal path in $\PathSet_{v,\dest_i}$ when starting at $v$ at time $\theta$ as predicted at the same time $\theta$ according to $\pCost_i$; in formulas:
\[
    \exists\, \Path\in\PathSet_{v,\dest_i} : \quad
	\Path_1 = e
    \,\land\,
    \Path\in\argmin_{\PathAlt\in\PathSet_{v,\dest_i}} \pCost_{i,\PathAlt}(\theta, f).
\]
We denote the set consisting of all active edges of commodity $i$ at time $\theta$ by $\pEdges_i(\theta,f)${} and the set of active times of an edge~$e$ and a commodity~$i$ by $\pActive_{e,i}(f)$.
\revised{Note that since we assumed in \Cref{sec:Model} that $\dest_i$ is reachable from every node, every node $v\neq \dest_i$ has at least one active outgoing edge at any time, \ie $\pEdges_i(\theta, f)\cap \edgesFrom{v}$ is non-empty.}
\begin{definition}
	A flow $f$ \emph{only uses active edges \wrt $\hat C$} until time $\horizon\in\IRnnInf$ if for all $e\in E$, $i\in I$ and almost all $\theta < \horizon$ it holds that \[
		f^+_{e,i}(\theta) > 0
		\implies
		e\in\pEdges_i(\theta,f).
	\]
\end{definition}

\begin{definition}[DPE]\label{def:dpe}
	A flow $f$ is a \emph{dynamic prediction equilibrium} (DPE) until time $\horizon$ if $f$ is consistent with $\EdgeLoading$, fulfils flow conservation, and only uses active edges \wrt $\hat C$ until time~$\horizon$.
\end{definition}

The cost predictors $\hat C_{i,\Path}$ are often based on the (actual) travel time functions $\ttime_e: \IR \times \flowSet \to \IRnn$ on the edges $e\in E${} of the network, where $\ttime_e(\theta,f)$ is the travel time of edge $e$ as induced by the flow~$f$ when entering~$e$ at time~$\theta$.
Based on these, we can define $\exitTime_e(\theta, f)\coloneqq \theta + \ttime_e(\theta, f)$ as the exit time when entering edge $e$ at time $\theta$ given flow $f$.
Usually, the travel time functions $\ttime_e$ are induced by the underlying physical model. 

As described by \textcite{PredictionEquilibria}, several well-studied equilibrium concepts fall into the class of \DPEs{}\iftrue{} by choosing appropriate cost predictors\fi.
These concepts include \emph{dynamic (Nash) equilibria} (DE) and \emph{instantaneous dynamic equilibria} (IDE).

\begin{example}
	A \emph{dynamic Nash equilibrium}  (DE) models the situation in which particles have full access to (future) information, which here means that they may use the future evolution of $\ttime_e$ to minimize their \emph{actual} travel time.
	This is achieved by the use of the so-called \emph{perfect predictor}, \ie
	\[
		\hat C_p(\theta, f) \coloneqq \exitTime_p(\theta, f) - \theta = (\exitTime_{e_k}(\emptyarg, f) \circ \,\cdots\, \circ \exitTime_{e_1}(\emptyarg, f))(\theta) - \theta 
	\]
	which predicts the travel time when entering path $p=(e_1, \ldots, e_k)$ at time $\theta$ exactly as it will constitute \wrt the flow $f$.
\end{example}

\begin{example}\label{ex:ide}
	An \emph{instantaneous dynamic equilibrium} (IDE) reflects the situation in which particles only have instantaneous information on the current travel times on every edge{} (at the time the particle takes any decision).
	Here, the particles choose an edge that minimizes their instantaneous travel time by using the so-called \emph{constant predictor}
	\[
		\hat C_p(\theta , f) \coloneqq   \sum_{e\in p} \ttime_e(\theta, f),
	\]
	where the predictions no longer depend on the future evolution of $f$.
\end{example}

The flexibility of the DPE framework also allows the use of more advanced cost predictors that are based on ML-algorithms, as it is often the case for today's navigation devices.
These ML-based predictors (as well as the constant predictor) are then examples of \emph{causal predictors}, \ie predictors $\pCost$ for which whenever two flows $f$ and $g$ coincide until some time $\horizon$, then $\pCost(\emptyarg, f)$ and $\pCost(\emptyarg, g)$ also coincide until~$\horizon$.

\smallskip

We can characterize the \DPE{} as the coherent flows of a suitably chosen routing operator:

\begin{lemma}\label{lem:routing-operator-prediction-equilibrium}
	Let $f$ be a flow, $\horizon\in\IRnnInf$ and let $\pCost$ be cost predictors.
	Then, $f$ only uses active edges \wrt $\hat C$ until time $\horizon$ if and only if $f$ is consistent until $\horizon$ with the routing operator 
	\begin{equation}\label{eq:RoutingOperatorPredictionEquilibrium}
		\newcommand{\conds}{
			\begin{array}{r l}
				\forall v, i:  & \sum_{e \in \edgesFrom{v}} r_{e,i}=\CharF_{v\neq \dest_i},\\
				\forall e, i, \foraall \theta: & r_{e,i}(\theta) > 0 \implies e\in\pEdges_i(\theta,f)
			\end{array} }
		\ROp(f) \coloneqq \Set{
			r \in \revised{\splitFcts}^{E\times I}
			| \conds
		}.
	\end{equation}
\end{lemma}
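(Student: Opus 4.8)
The plan is to prove the two implications separately; the ``if'' direction is immediate, while the ``only if'' direction requires constructing a suitable flow split $r\in\ROp(f)$.

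For ``consistent with $\ROp$ $\Rightarrow$ only uses active edges'', I would fix a witness $r\in\ROp(f)$, so that $f^+_{e,i}(\theta)=r_{e,i}(\theta)\cdot\sum_{e'\in\edgesFrom{v}}f^+_{e',i}(\theta)$ for all $e=vw$, $i$ and almost all $\theta<\horizon$. Whenever $f^+_{e,i}(\theta)>0$, the right-hand side is positive, hence $r_{e,i}(\theta)>0$, and the third defining condition of $\ROp(f)$ yields $e\in\pEdges_i(\theta,f)$. Since there are only finitely many pairs $(e,i)$, the union of the exceptional null sets is null, so $f$ only uses active edges \wrt $\pCost$ until $\horizon$.

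For the converse I would construct $r$ by hand. Write $S_{v,i}(\theta)\coloneqq\sum_{e'\in\edgesFrom{v}}f^+_{e',i}(\theta)$. On $\{\theta<\horizon : S_{v,i}(\theta)>0\}$ I set $r_{e,i}(\theta)\coloneqq f^+_{e,i}(\theta)/S_{v,i}(\theta)$ for each $e\in\edgesFrom{v}$; there $\sum_{e\in\edgesFrom{v}}r_{e,i}(\theta)=1$, and $r_{e,i}(\theta)>0$ implies $f^+_{e,i}(\theta)>0$, hence $e\in\pEdges_i(\theta,f)$ for almost all such $\theta$ by hypothesis (this also forces $\{\theta<\horizon : v=t_i,\ S_{v,i}(\theta)>0\}$ to be a null set, since no edge out of $t_i$ is ever active). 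Everywhere else — where $S_{v,i}(\theta)=0$, or $\theta\geq\horizon$, or $v=t_i$ — I place all the mass on a single active edge chosen measurably: fixing an enumeration $e_1,\dots,e_k$ of $\edgesFrom{v}$, set $r_{e_j,i}(\theta)\coloneqq 1$ iff $v\neq t_i$, $e_j\in\pEdges_i(\theta,f)$ and $e_1,\dots,e_{j-1}\notin\pEdges_i(\theta,f)$, and $r_{e_j,i}(\theta)\coloneqq 0$ otherwise. Since $\PathSet_{v,t_i}\neq\emptyset$ guarantees $\pEdges_i(\theta,f)\neq\emptyset$ for every $\theta$ and every $v\neq t_i$, exactly one outgoing edge of $v$ receives mass $1$ here, so the sum condition holds, and the support condition holds by construction.

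It then remains to verify $r\in\ROp(f)$ and the consistency equation. Consistency is clear on $\{\theta<\horizon : S_{v,i}(\theta)>0\}$ by definition, and on $\{\theta<\horizon : S_{v,i}(\theta)=0\}$ both sides vanish since $0\leq f^+_{e,i}(\theta)\leq S_{v,i}(\theta)=0$; non-negativity, the sum condition and the support condition were checked above. The only genuinely technical point is measurability of $r$: it is immediate on the ``ratio'' part, and on the ``selection'' part it reduces to measurability of each set $\pActive_{e,i}(f)=\{\theta : e\in\pEdges_i(\theta,f)\}$. This follows from the assumed measurability of $\theta\mapsto\pCost_{i,\Path}(\theta,f)$: an edge $e=vw$ with $v\neq t_i$ is active at $\theta$ exactly when $\min_{\Path\in\PathSet_{v,t_i},\ \Path_1=e}\pCost_{i,\Path}(\theta,f)=\min_{\PathAlt\in\PathSet_{v,t_i}}\pCost_{i,\PathAlt}(\theta,f)$, i.e.\ when two pointwise minima of finitely many measurable functions coincide — a measurable condition. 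A measurable $[0,1]$-valued $r$ lies in $\pLocInt^{E\times I}$, which finishes the proof. I expect this measurable-selection step to be the main (mild) obstacle; the rest is routine bookkeeping about null sets and the special role of the sink node.
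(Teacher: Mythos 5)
Your proof is correct and follows essentially the same strategy as the paper's: the ``if'' direction is the same immediate observation, and for the ``only if'' direction both you and the paper construct $r$ by taking the ratio $f^+_{e,i}/S_{v,i}$ wherever the node inflow is positive and distributing the (irrelevant) mass measurably over active outgoing edges elsewhere. The only cosmetic difference is in the fallback: the paper spreads the mass uniformly over $\edgesFrom{v}\cap\pEdges_i(\theta,f)$, while you concentrate it on the first active edge in a fixed enumeration; both yield a measurable $r\in\ROp(f)$, and your remark that measurability reduces to that of $\pActive_{e,i}(f)$ via the assumed measurability of $\theta\mapsto\pCost_{i,p}(\theta,f)$ is the same point the paper uses. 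One small stylistic note: the paper simply sets $r_{e,i}\equiv 0$ for edges out of $t_i$ from the outset, which is a bit cleaner than your route of observing that the ratio region intersects $\{v=t_i\}$ only on a null set and then overriding it with the selection rule there.
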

Recall, that $\splitFcts$ (\cf \Cref{not:function-spaces}) is the set of measurable functions from $\IR$ to $[0,1]$.
\newcommand{\proofLemRoutingOperatorPredictionEquilibrium}[1][Proof]{
\begin{proof}[#1]
	Assume $f$ only uses active edges up to time $\horizon$.
	For a commodity $i\in I$ and an edge $e=vw$ with $v\neq \dest_i$ we define \[
		r_{e,i} (\theta) \coloneqq \begin{cases}
			\frac{f^+_{e,i}(\theta)}{\sum_{e\revMinor{'}\in\edgesFrom{v}}f^+_{e\revMinor{'},i}(\theta)}, & \text{if } {\displaystyle \sum_{e\revMinor{'}\in\edgesFrom{v}}} f^+_{e\revMinor{'},i}(\theta) > 0 \land \theta < \horizon,\\
			\frac{\CharF_{\pActive_{e,i}(f)}(\theta)}{\smallabs{\edgesFrom{v}\cap \pEdges_{i}(\theta,f)}}, & \text{otherwise},
		\end{cases}
	\]
	and $r_{e,i}(\theta) \coloneqq 0$ for $v=\dest_i$.
	Note that $r_{e,i}$ is a measurable function since $\smallabs{\edgesFrom{v}\cap\pEdges_{i}(\theta,f)} = \sum_{e\revMinor{'}\in\edgesFrom{v}} \CharF_{\pActive_{e\revMinor{'},i}(f)}(\theta)$ holds. 
	Clearly, $r_{e,i}(\theta)$ is in the interval~$[0,1]$ and we have $\sum_{e\in\edgesFrom{v}} r_{e,i} (\theta) = \CharF_{v\neq \dest_i}$ for all $e\in E,v\in V, i\in I$ and almost all $\theta\in\IR$.
	Finally, $r_{e,i}(\theta) > 0$ can only hold if $\theta\in\pActive_{e,i}(f)$ or if $f_{e,i}^+(\theta) > 0$ with $\theta < \horizon$ hold.
	The latter (also) implies $e\in\pEdges_i(\theta,f)$ as $f$ only uses active edges up to $\horizon$.
	In conclusion, this shows that $r\in\ROp(f)$.
	By construction of $r$, we also have $f^+_{e,i}(\theta) = r_{e,i}(\theta)\cdot \sum_{e\revMinor{'}\in\edgesFrom{v}}f^+_{e\revMinor{'},i}(\theta)$ for all $i\in I, e=vw\in E$ and almost all $\theta<\horizon$, and thus $f$ is consistent with $\ROp$ up to time $\horizon$.

	For the other direction, assume that $f$ is consistent with $\ROp$ up to time $\horizon$ and let $r\in\ROp(f)$ be such that $f^+_{e,i}(\theta) = r_{e,i}(\theta)\cdot \sum_{e\revMinor{'}\in\edgesFrom{v}} f^+_{e\revMinor{'},i}(\theta)$ for all $i\in I, e=vw\in E$ and almost all $\theta\in[0,\horizon)$.
	Then, $f_{e,i}^+(\theta)$ can only be positive whenever $r_{e,i}$ is positive, in which case $e\in\pEdges_i(\theta,f)$ holds.
	Therefore, $f$ only uses active edges up to time $\horizon$.
\end{proof}
}\proofLemRoutingOperatorPredictionEquilibrium

We continue by showing that \revised{this routing operator $\ROp$ is decomposable (\Cref{def:decomposable})} under certain conditions.
This allows us to apply the existence results \iftrue from the previous section \fi to \DPEs.

\newcommand{\lemmaAltContinuityPredictionEquilibrium}{
\begin{lemma}\label{lem:alt-continuity-prediction-equilibrium}
	Let $\horizon\in\IRnn$ and let $\hat C_{i,p}$ be a collection of cost predictors.
	If $\hat C_{i,p}(\theta,\emptyarg)$ is sequentially weak-strong continuous \wrt $(\flowSet_\horizon, \revMinor{\normFct_p})$ and $\IR$ for all $i\in I, p\in\PathSet$ and almost all $\theta<\horizon$, then \revised{the routing operator $\ROp_\horizon$ defined in \eqref{eq:RoutingOperatorPredictionEquilibrium} is decomposable}.
\end{lemma}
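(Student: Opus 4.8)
The plan is to deduce the statement from \Cref{lem:continuity-prediction-equilibrium}: it suffices to fix an arbitrary sequence of flows $f^{(n)}$ in $\flowSet_\horizon$ converging weakly to some flow $f$, together with an edge $e = vw \in E$ and a commodity $i \in I$, and to prove that $\leb\bigl(\pActive_{e,i}(f^{(n)}) \setminus \pActive_{e,i}(f)\bigr) \to 0$. Since $I$ and $\PathSet$ are finite, there is a single Lebesgue-null set $N \subseteq [0,\horizon)$ outside of which $\hat C_{i,\Path}(\theta, \emptyarg)$ is sequentially weak-strong continuous for every $i \in I$ and every $\Path \in \PathSet$; in particular, $\hat C_{i,\Path}(\theta, f^{(n)}) \to \hat C_{i,\Path}(\theta, f)$ for every such $\Path$ whenever $\theta \notin N$. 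The case $v = t_i$ is trivial, since then $\pActive_{e,i}$ is empty for every argument, so from now on I would assume $v \neq t_i$.

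Next I would show that the indicator functions $\CharF_{\pActive_{e,i}(f^{(n)}) \setminus \pActive_{e,i}(f)}$ converge to $0$ pointwise almost everywhere. For $\theta \in \pActive_{e,i}(f)$ this indicator vanishes for every $n$, so fix $\theta \in [0,\horizon) \setminus N$ with $\theta \notin \pActive_{e,i}(f)$, i.e.\ $e \notin \pEdges_i(\theta,f)$. By definition of active edges this means that no optimal $v$-$t_i$-path at time $\theta$ \wrt $f$ starts with $e$, hence the gap
\[
	\delta \coloneqq \min_{\Path \in \PathSet_{v,t_i},\, \Path_1 = e} \hat C_{i,\Path}(\theta,f) \;-\; \min_{\PathAlt \in \PathSet_{v,t_i}} \hat C_{i,\PathAlt}(\theta,f)
\]
is strictly positive (both minima are attained since $\PathSet_{v,t_i}$ is finite and non-empty). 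Because $\PathSet_{v,t_i}$ is finite and the costs converge pointwise at $\theta$, both minima evaluated at $f^{(n)}$ converge to the corresponding minima at $f$, so for all $n$ large enough (depending on $\theta$) we still have $\min_{\Path_1 = e} \hat C_{i,\Path}(\theta, f^{(n)}) > \min_{\PathAlt} \hat C_{i,\PathAlt}(\theta, f^{(n)})$; thus $e \notin \pEdges_i(\theta, f^{(n)})$, i.e.\ $\theta \notin \pActive_{e,i}(f^{(n)})$, for all large $n$. This gives the claimed pointwise a.e.\ convergence to $0$.

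Finally I would invoke dominated convergence: the functions $\CharF_{\pActive_{e,i}(f^{(n)}) \setminus \pActive_{e,i}(f)}$ are all dominated by $\CharF_{[0,\horizon]}$, which is integrable since $\horizon \in \IRnn$ is finite, and they tend to $0$ almost everywhere by the previous step. Hence $\leb\bigl(\pActive_{e,i}(f^{(n)}) \setminus \pActive_{e,i}(f)\bigr) = \int \CharF_{\pActive_{e,i}(f^{(n)}) \setminus \pActive_{e,i}(f)} \diff\leb \to 0$, and \Cref{lem:continuity-prediction-equilibrium} yields that the graph of $\ROp_\horizon$ is sequentially weakly closed. The only mildly delicate point is the gap argument in the middle paragraph, namely that the argmin-structure over the path set is stable under the mere pointwise convergence $\hat C_{i,\Path}(\theta, f^{(n)}) \to \hat C_{i,\Path}(\theta, f)$; this works precisely because there are only finitely many candidate paths, so a strict inequality between two finite minima persists in the limit. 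Everything else is routine.
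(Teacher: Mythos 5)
Your proposal is correct and takes essentially the same route as the paper: reduce to \Cref{lem:continuity-prediction-equilibrium}, show pointwise a.e.\ stability of the active times under the a.e.\ pointwise convergence of the finitely many path costs, and then pass from pointwise to measure convergence by dominated convergence. The only cosmetic differences are that you argue contrapositively via a strict gap $\delta>0$ between the two finite minima where the paper uses a pigeonhole argument (some path starting with $e$ is optimal infinitely often along the sequence), and that you spell out the dominated-convergence step that the paper compresses into ``clearly''.
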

}
\lemmaAltContinuityPredictionEquilibrium
\newcommand{\proofLemAltContinuityPredictionEquilibrium}[1][Proof]{
\begin{proof}[#1]
	\revised{
	The first equality conditions in~\eqref{eq:RoutingOperatorPredictionEquilibrium} can be transformed into two inequality constraints for each node-commodity pair of the required shape in~\eqref{eq:decomposability} for decomposability (with $H_k$ and $b_k$ constant in $f$, and thus trivially sequentially weak-strong continuous).
	For every $e\in E, i\in I$, the second condition is equivalent to $r_{e,i} \leq \CharF_{[0,\horizon]\cap \pActive_{e,i}(f)}$.
	We show that this right-hand side is pointwise sequentially weak-strong upper-semicontinuous in $f$; since $\CharF_{[0, \horizon]\cap\pActive_{e,i}(f)}$ is uniformly bounded by $1$, any sequence is trivially dominated by $\CharF_{[0, \horizon]}$.
	}

	Let $f^{(n)}$ be a sequence weakly converging to $f$ in $\flowSet_\horizon$ and consider only times~$\theta$ where $\hat C_{i,p}(\theta, f^{(n)})$ converges to $\hat C_{i,p}(\theta, f)$.
	If $\theta\in\hat\Theta_{e,i}(f^{(n)})$ holds for infinitely many $n\in \IN$, then there exists some path $p$ starting with $e=vw$ that minimizes $\hat C_{i,p}(\theta, f^{(n)})$ over all paths in $\PathSet_{v,\dest_i}$ for infinitely many $n\in\IN$.
	Therefore, $\hat C_{i,p}(\theta, f^{(n)})$ converges to $\min_{q\in\PathSet_{e,i}} \hat C_{i,q}(\theta, f)$ and, thus, $\theta\in \hat\Theta_{e,i}(f)$.
	Clearly, this implies \revised{$\limsup_{n\to\infty} \CharF_{[0,\horizon]\cap \hat\Theta_{e,i}(f^{(n)})}(\theta) \leq \CharF_{[0,\horizon]\cap \hat\Theta_{e,i}(f)}(\theta)$}. 
\end{proof}
}\proofLemAltContinuityPredictionEquilibrium

\begin{corollary}\label{cor:prediction-equilibria-existence}
	Let $\EdgeLoading$ be a locally bounded edge loading operator and let $\hat C_{i,p}$ be a set of cost predictors.
	\begin{enumerate}[label=(\roman*)]
		\item\label{cor:prediction-equilibria-existence:finite} If, for some $\horizon\in\IRnn$, $\EdgeLoading_\horizon$ is sequentially weak-weak continuous \revMinor{\wrt $\normFct_p$} and all $f\mapsto \hat C_{i,p}(\theta, f)$ are sequentially weak-strong continuous \wrt $(\flowSet_\horizon, \revMinor{\normFct_p})$ and $\IR$ for almost all $\theta <\horizon$, then there exists a dynamic prediction equilibrium until $\horizon$.

		\item\label{cor:prediction-equilibria-existence:infinite}
		\revMinor{If the assumptions of~\ref{cor:prediction-equilibria-existence:finite} hold for all $\horizon\in\IRnn$} and if $\EdgeLoading$ and all $\hat C_{i,p}$ are causal,
		then there exists a dynamic prediction equilibrium until $\infty$.
	\end{enumerate}
\end{corollary}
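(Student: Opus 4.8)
The plan is to reduce both statements to the general existence results of \Cref{sec:ExistenceUniqueness} via \Cref{lem:routing-operator-prediction-equilibrium}, which identifies the dynamic prediction equilibria until $\horizon$ with the $\EdgeLoading$-$\ROp$-coherent flows until $\horizon$ for the routing operator $\ROp$ from \eqref{eq:RoutingOperatorPredictionEquilibrium} (a flow automatically satisfies flow conservation and consistency with $\EdgeLoading$ exactly under the remaining equilibrium conditions). Before invoking the fixed-point machinery I would record two structural facts about $\ROp$ valid for \emph{every} flow $f$. First, $\ROp(f)$ is non-empty: putting $r_{e,i}(\theta)\coloneqq\CharF_{\pActive_{e,i}(f)}(\theta)/\smallabs{\edgesFrom{v}\cap\pEdges_i(\theta,f)}$ for $v\neq t_i$ and $r_{e,i}\coloneqq 0$ for $v=t_i$ — the ``otherwise'' branch of the construction in the proof of \Cref{lem:routing-operator-prediction-equilibrium} — yields an element, the denominator being positive since $\PathSet_{v,t_i}\neq\emptyset$ guarantees an active edge at every non-sink node. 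Second, $\ROp(f)$ is convex, as the three conditions in \eqref{eq:RoutingOperatorPredictionEquilibrium} are just the affine/convex constraints $r_{e,i}\geq 0$, $\sum_{e\in\edgesFrom{v}}r_{e,i}=\CharF_{v\neq t_i}$ and ``$r_{e,i}$ vanishes a.e.\ outside $\pActive_{e,i}(f)$''. Since $r\mapsto\CharF_{[0,\horizon]}\cdot r$ is linear, $\ROp_\horizon$ inherits non-empty, convex values.

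For the finite-horizon statement I would then merely check the hypotheses of \Cref{thm:existence-finite-time-horizon}: $\EdgeLoading$ is locally bounded and $\EdgeLoading_\horizon$ sequentially weak-weak continuous by assumption; $\ROp_\horizon$ has non-empty, convex values by the previous paragraph; and $\ROp_\horizon$ has a sequentially weakly closed graph by \Cref{lem:alt-continuity-prediction-equilibrium}, whose hypothesis is precisely the assumed sequential weak-strong continuity of $f\mapsto\hat C_{i,p}(\theta,f)$ for almost all $\theta<\horizon$. \Cref{thm:existence-finite-time-horizon} then produces a coherent flow, \ie a dynamic prediction equilibrium, until $\horizon$.

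For the infinite-horizon statement I would additionally verify that $\ROp$ is causal: if $f$ and $g$ coincide until $\horizon$, then causality of each $\hat C_{i,p}$ makes $\hat C_{i,p}(\emptyarg,f)$ and $\hat C_{i,p}(\emptyarg,g)$ coincide until $\horizon$, hence $\pEdges_i(\theta,f)=\pEdges_i(\theta,g)$ for almost all $\theta<\horizon$ and therefore $\CharF_{[0,\horizon]}\cdot\ROp(f)=\CharF_{[0,\horizon]}\cdot\ROp(g)$ (so one may even take $\alpha=0$ in \Cref{def:causal-routing-operator}). With $\EdgeLoading$ locally bounded and causal and $\ROp$ causal, I would invoke \Cref{lem:ExtensionIfContinuous}, choosing the extension parameter $\alpha\coloneqq 1$ for any flow $f$ coherent until some $\horizon$: the restriction of $\EdgeLoading_{\horizon+1}$ to $\Omega^+_{\horizon,1}(f^+)$ is sequentially weak-weak continuous because $\EdgeLoading_{\horizon+1}$ is (the assumption is imposed at every horizon, in particular at $\horizon+1$), and the restriction of $\ROp_{\horizon+1}$ to $\Omega_{\horizon,1}(f)$ has non-empty, convex values and a sequentially weakly closed graph because $\ROp_{\horizon+1}$ does (by \Cref{lem:alt-continuity-prediction-equilibrium} applied at horizon $\horizon+1$) while $\Omega_{\horizon,1}(f)$ is norm-closed and convex, hence weakly sequentially closed. \Cref{lem:ExtensionIfContinuous} then yields a coherent flow until $\infty$, \ie a dynamic prediction equilibrium until $\infty$.

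I do not expect a real obstacle: the corollary is essentially a repackaging of \Cref{lem:routing-operator-prediction-equilibrium,lem:alt-continuity-prediction-equilibrium} together with the general existence results \Cref{thm:existence-finite-time-horizon,lem:ExtensionIfContinuous}, and the only things verified from scratch are the non-emptiness, convexity and causality of $\ROp$ and the observation that the global continuity assumptions restrict harmlessly to the subdomains appearing in \Cref{lem:ExtensionIfContinuous}. The only point needing a little attention is the bookkeeping of the ``almost all $\theta$'' quantifiers when passing from $\horizon$ to $\horizon+1$ — but as the continuity hypothesis is imposed at \emph{every} finite horizon, \Cref{lem:alt-continuity-prediction-equilibrium} is directly applicable at the enlarged horizon.
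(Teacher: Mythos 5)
Your proposal is correct and follows essentially the same route as the paper's own proof: reduce DPE to $\EdgeLoading$-$\ROp$-coherence via \Cref{lem:routing-operator-prediction-equilibrium}, verify that $\ROp$ has non-empty convex values (with the same explicit witness), invoke \Cref{lem:alt-continuity-prediction-equilibrium} for the sequentially weakly closed graph, then apply \Cref{thm:existence-finite-time-horizon} for the finite-horizon case and \Cref{lem:ExtensionIfContinuous} for the infinite-horizon case. The only differences from the paper are that you spell out the causality of $\ROp$ and the extension-step hypotheses (with $\alpha=1$) in more detail than the paper's two-sentence treatment, which the paper leaves to the reader — a welcome but not substantively different elaboration.
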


\newcommand{\proofCorPredictionEquilibriaExistence}[1][Proof]{
\begin{proof}[#1]
	Note first, that the routing operator~$\ROp$ defined in \eqref{eq:RoutingOperatorPredictionEquilibrium} always has non-empty \revMinor{values: This is witnessed by the flow split $r$ defined by} $r_{vw,i}\coloneqq 0$, for $v=\dest_i$, and $r_{vw,i}\coloneqq \CharF_{\hat\Theta_{vw,i}(f)} / \sum_{e\in\edgesFrom{v}}\CharF_{\hat\Theta_{e,i}}(f)$, for $v\neq \dest_i$\revised{, which is well-defined since $v$ has at least one active outgoing edge at all times.}
	Therefore, \revMinor{thanks to} \Cref{lem:alt-continuity-prediction-equilibrium,lem:decomposable-implies-regular}, we can apply \Cref{thm:existence-finite-time-horizon} to get existence of a $\EdgeLoading$-$\ROp$-coherent flow until time~$\horizon$. \Cref{lem:routing-operator-prediction-equilibrium} then ensures that this is also a dynamic prediction equilibrium until~$\horizon$.
	
	Under the additional assumption that all predictors $\hat C_{i,p}$ are causal, we immediately get causality of~$\ROp$. Together with the assumption that $\EdgeLoading$ is causal as well, we can apply \Cref{thm:ExtensionIfContinuous} to obtain the existence of a coherent flow (and, hence, a dynamic prediction equilibrium) until~$\infty$.
\end{proof}
}\proofCorPredictionEquilibriaExistence

\begin{remark}
	If the map $f \mapsto \ttime_e(\emptyarg, f)$ is sequentially weak-strong continuous \wrt $(\flowSet_\horizon, \revMinor{\normFct_p})$ and $(C([0,\horizon],\IRnn),\revMinor{\normFct_\infty})$ for all $e\in E$ (which is the case for both Vickrey's model and the affine-linear volume-delay dynamics), then both the perfect predictor and the constant predictor become continuous in the sense of \Cref{cor:prediction-equilibria-existence}.
	Therefore, \Cref{cor:prediction-equilibria-existence}~\ref{cor:prediction-equilibria-existence:finite}{} subsumes the existence results of dynamic Nash equilibria for Vickrey's model (\cite[Theorem~8]{CCLDynEquil}) and the affine-linear volume-delay model (\cite[Theorem~4.2]{ZhuM00}). Similarly, 
	\Cref{cor:prediction-equilibria-existence} generalizes the existence results for instantaneous dynamic equilibria, which can be found for Vickrey's model in \cite[Theorem~5.6]{DynamicFlowswithAdaptiveRouteChoice}, and the existence results for dynamic prediction equilibria in \cite[Theorem~15]{PredictionEquilibria}.
		Further, the above result can also be applied to new predictors obtained by combining the predictors mentioned so far,
		for example, to a perfect predictor with limited time horizon where perfect predictions are used until some finite time horizon and constant predictions afterwards.
\end{remark}

\subsubsection{Prescriptive Routing Operators}\label{subsec:prescriptive-routing-operators}

Let us now summarize our results on prescriptive routing operators.
\revised{Recall that we call $\ROp$ prescriptive if $\ROp(f)$ consists of a single element for all flows $f$; we denote this single element by $\ROe(f)$.}
\revised{For these operators, it is straightforward to see that they fulfill decomposability (\Cref{def:decomposable}) if $\ROe_\horizon$ is sequentially weak-strong continuous:}

\newcommand{\lemPrescriptiveOperatorsClosedGraphTheorem}{
\begin{lemma}\label{lem:prescriptive-operators-closed-graph-theorem}
    Let $\ROp$ be a prescriptive routing operator.
    Then, for every $\horizon\in\IRnn$, \revised{$\ROp_\horizon$ is decomposable} if $\ROe_\horizon$ is sequentially weak-\revised{strong} continuous \revised{\wrt $\normFct_p$ and $\normFct_1$},
    where we define
	\[
		\ROe_\horizon : \flowSet_\horizon \to \revised{\splitFcts}_\horizon^{E\times I},
		\quad f \mapsto \CharF_{[0,\horizon]}\cdot \ROe(f).
	\]
\end{lemma}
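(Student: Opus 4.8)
The plan is to read this as the closed-graph principle specialised to a weakly compact domain, the one genuinely useful fact being that a prescriptive routing operator takes values in a fixed bounded set. First I would record the relevant structure. By \Cref{not:restrictedRoutingOp} and prescriptiveness, $\ROp_\horizon(f) = \set{\CharF_{[0,\horizon]}\cdot\ROe(f)} = \set{\ROe_\horizon(f)}$ for every flow $f$, so $\graph(\ROp_\horizon) = \set{(f,\ROe_\horizon(f)) | f\in\flowSet_{\horizon}}$, and ``$(f,r)\in\graph(\ROp_\horizon)$'' simply means $r = \ROe_\horizon(f)$. Moreover, the routing-operator axioms give $0\le\ROe(f)_{e,i}\le\sum_{e'\in\edgesFrom{v}}\ROe(f)_{e',i} = \CharF_{v\neq t_i}\le 1$ almost everywhere, so $\ROe_\horizon(f)$ always lies in the set $D$ of $g\in L^p([0,\horizon])^{E\times I}$ with $0\le g_{e,i}\le 1$ a.e.; since $\horizon<\infty$, the set $D$ is bounded (this is the only place finiteness of $\horizon$ enters), and being convex and norm-closed it is weakly closed (\citealt[Theorem~7.10]{Hundertmark2013}), hence, as a bounded subset of the reflexive space $L^p([0,\horizon])^{E\times I}$ (\citealt[Theorem~6.25]{InfiniteDimensionalAnalysis}), weakly compact; by \Cref{prop:weakly-metrizable} the weak topology on $D$ is in addition metrizable, say by a metric $d$.

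The implication ``$\ROe_\horizon$ sequentially weak-weak continuous $\Rightarrow$ $\graph(\ROp_\horizon)$ sequentially weakly closed'' is then immediate: given a sequence $(f^{(n)},r^{(n)})_n$ in the graph with $(f^{(n)},r^{(n)})\weakto(f,r)$ we have $f^{(n)}\weakto f$, hence $r^{(n)} = \ROe_\horizon(f^{(n)})\weakto\ROe_\horizon(f)$ by continuity, while also $r^{(n)}\weakto r$; uniqueness of weak limits forces $r = \ROe_\horizon(f)$, i.e. $(f,r)\in\graph(\ROp_\horizon)$.

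For the converse I would argue by contradiction, which is where the (modest) amount of real work sits. Assume $\graph(\ROp_\horizon)$ is sequentially weakly closed, let $f^{(n)}\weakto f$ in $\flowSet_{\horizon}$, and suppose the sequence $\ROe_\horizon(f^{(n)})$ does not converge weakly to $\ROe_\horizon(f)$; since all of these points and $\ROe_\horizon(f)$ lie in $D$, there are an $\eps>0$ and a subsequence with $d(\ROe_\horizon(f^{(n_k)}),\ROe_\horizon(f))\ge\eps$. As $(D,d)$ is compact, pass to a further $d$-convergent subsequence $\ROe_\horizon(f^{(n_{k_j})})\weakto r$ with $r\in D$, necessarily $r\neq\ROe_\horizon(f)$. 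Then $(f^{(n_{k_j})},\ROe_\horizon(f^{(n_{k_j})}))\weakto(f,r)$ in the product space, each term lies in $\graph(\ROp_\horizon)$, so sequential weak closedness forces $(f,r)\in\graph(\ROp_\horizon)$, i.e. $r=\ROe_\horizon(f)$ --- a contradiction. Hence $\ROe_\horizon(f^{(n)})\weakto\ROe_\horizon(f)$, which is exactly the sequential weak-weak continuity of $\ROe_\horizon$. The one obstacle to keep in view is precisely what the uniform bound $\ROe(f)_{e,i}\le 1$ settles: without the image of $\ROe_\horizon$ lying in a bounded --- hence weakly sequentially relatively compact --- subset of $L^p([0,\horizon])^{E\times I}$, the convergent subsubsequence could not be extracted, and indeed the equivalence fails for $\horizon=\infty$; beyond the defining axioms of a routing operator, no Lipschitz, convexity, or causality hypothesis is required.
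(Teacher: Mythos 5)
Your proof is correct and follows essentially the same route as the paper's: identify $\graph(\ROp_\horizon)$ with $\graph(\ROe_\horizon)$, observe that the image of $\ROe_\horizon$ lies in a norm-bounded (hence, by reflexivity, weakly sequentially compact) subset of $\pInt^{E\times I}$, extract a weakly convergent subsequence, and invoke the closed-graph hypothesis; the converse is the same one-line argument. If anything, your version is slightly more careful than the paper's, since you spell out the contradiction/subsubsequence step (via metrizability of the weak topology on the bounded set, \Cref{prop:weakly-metrizable}) needed to pass from ``the extracted subsequence converges to $\ROe_\horizon(f)$'' to ``the whole sequence converges to $\ROe_\horizon(f)$,'' whereas the paper leaves that implicit.
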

\begin{proof}
	\revised{
		Clearly, $\ROp_\horizon(f)$ can be written as the set of flow splits $\tilde{r}\in\splitFcts_\horizon^{E\times I}$ with $\tilde{r}_{e,i} \leq \ROe_\horizon(f)_{e,i}$ and $-\tilde{r}_{e,i} \leq -\ROe_\horizon(f)_{e,i}$, and thus fits the form of~\eqref{eq:decomposability}.
		As $\ROe_\horizon(f)$ is bounded by $1$ for all $f$, $\ROe_\horizon$ is also weak-strong continuous \wrt $\normFct_p$ and $\normFct_q$ with $q$ defined as $\nicefrac{1}{p} + \nicefrac{1}{q} = 1$ (\cf \Cref{prop:p-integrable-bounded-codomain}).
		This implies that $\ROp_\horizon$ is decomposable.
	}
\end{proof}
}

\lemPrescriptiveOperatorsClosedGraphTheorem
This, together with \revMinor{our} results from \Cref{sec:ExistenceUniqueness}, gives us the following existence and uniqueness results for prescriptive operators.

\begin{corollary}\label{cor:prescriptive-operators}
	Let $\EdgeLoading$ be a locally bounded edge loading operator and let $\ROp$ be a prescriptive routing{} {}operator.
	\begin{enumerate}[label=(\roman*)]
		\item\label{cor:prescriptive-operators:finite} If, for some $\horizon\in\IRnn$, $\EdgeLoading_\horizon$ is sequentially weak-weak continuous \revMinor{\wrt $\normFct_p$} and $\ROe_\horizon$ is sequentially weak-\revised{strong} continuous \revMinor{\wrt $\normFct_p$ and $\normFct_\revised{1}$}, then there exists a coherent flow until $\horizon$.
		\item\label{cor:prescriptive-operators:infinite-causal} If \revMinor{the assumptions of~\ref{cor:prescriptive-operators:finite} hold for all $\horizon\in\IRnn$,} and $\EdgeLoading$ and $\ROe$ are causal, then there exists a coherent flow until $\infty$.
		\item\label{cor:prescriptive-operators:strictly-causal} If both $\EdgeLoading$ and $\ROe$ are strictly causal, there exists a unique coherent flow until $\infty$.
		\item\label{cor:prescriptive-operators:lipschitz} If $\EdgeLoading$ is uniformly strictly causal and essentially bounded, $\ROe$ is causal, and $\ROe_\horizon$ is Lipschitz-continuous \wrt $\normFct_1$ and \revMinor{$\normFct_{\revised{1}}$}, for all $\horizon\in\IRnn$, and the network inflow rates vector $u$ is in $\pLocInt[\infty]^{V\times I}$, then there exists a unique coherent flow until~$\infty$.
	\end{enumerate}
\end{corollary}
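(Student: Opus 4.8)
The plan is to derive all four parts from the general existence and uniqueness results of \Cref{sec:ExistenceUniqueness}; the work is essentially bookkeeping, since a prescriptive routing operator $\ROp$ always has the singleton value $\ROp(f)=\{\ROe(f)\}$, so $\ROp_\horizon$ automatically has non-empty and convex values, and the only non-trivial routing hypothesis appearing in those theorems --- a sequentially weakly closed graph of $\ROp_\horizon$ --- is, by \Cref{lem:prescriptive-operators-closed-graph-theorem}, \emph{equivalent} to sequential weak-weak continuity of $\ROe_\horizon$. I will use this equivalence repeatedly, together with three elementary facts: a restriction of a sequentially weak-weak continuous map is again sequentially weak-weak continuous; a restriction of a Lipschitz map is again Lipschitz; and the extension sets $\Omega^+_{\horizon,\alpha}(f^+)$ and $\Omega_{\horizon,\alpha}(f)$ are weakly closed affine subspaces of $\rateFcts_{\horizon+\alpha}^{E\times I}$ resp.\ $\flowSet_{\horizon+\alpha}$ (being convex and strongly closed, hence weakly closed).

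For part~\ref{cor:prescriptive-operators:finite} I would just verify the hypotheses of \Cref{thm:existence-finite-time-horizon}: $\EdgeLoading$ is locally bounded and $\EdgeLoading_\horizon$ is sequentially weak-weak continuous by assumption, and $\ROp_\horizon$ has a sequentially weakly closed graph (\Cref{lem:prescriptive-operators-closed-graph-theorem}) with non-empty, convex values; the theorem then produces a coherent flow until $T$. For part~\ref{cor:prescriptive-operators:infinite-causal} I would apply \Cref{lem:ExtensionIfContinuous}: $\EdgeLoading$ is locally bounded and causal, $\ROp$ is causal (which for a prescriptive operator is exactly causality of $\ROe$), and for any $f$ coherent until $\horizon$ and any $\alpha>0$ the restriction $\restr{\EdgeLoading_{\horizon+\alpha}}{\Omega^+_{\horizon,\alpha}(f^+)}$ inherits sequential weak-weak continuity, while the graph of $\restr{\ROp_{\horizon+\alpha}}{\Omega_{\horizon,\alpha}(f)}$ is the intersection of the sequentially weakly closed set $\graph(\ROp_{\horizon+\alpha})$ (again by \Cref{lem:prescriptive-operators-closed-graph-theorem}) with the weakly closed affine set $\Omega_{\horizon,\alpha}(f)\times\pInt^{E\times I}$, hence sequentially weakly closed, and has non-empty, convex values; so conditions~\ref{EdgeLoadingContinuous} and~\ref{RoutingContinuous} of \Cref{lem:ExtensionIfContinuous} are met and it yields a coherent flow until $\infty$.

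Part~\ref{cor:prescriptive-operators:strictly-causal} is immediate: its hypotheses are literally those of \Cref{lem:unique-extension-if-strictly-causal} (both operators strictly causal, $\ROp$ prescriptive, $\EdgeLoading$ locally bounded by the standing assumption of the corollary), which gives a unique coherent flow until $\infty$. For part~\ref{cor:prescriptive-operators:lipschitz} I would invoke \Cref{thm:unique-existence-for-lipschitz-operators}: $\EdgeLoading$ is causal (being uniformly strictly causal), $\ROp$ is causal and prescriptive, and $u\in\pLocInt[\infty]^{V\times I}$; condition~\ref{cond:EdgeLoading-essentially-bounded} holds since $\EdgeLoading$ is essentially bounded; condition~\ref{cond:RoutingOperator-Lipschitz} holds because the restriction of the Lipschitz map $\ROe_{\horizon+\alpha}$ to $\Omega_{\horizon,\alpha}(f)$ stays Lipschitz from the $1$-norm to the $p$-norm; and condition~\ref{cond:EdgeLoading-Lipschitz} holds for free, since --- as remarked after that theorem --- uniform strict causality of $\EdgeLoading$ forces $\restr{\EdgeLoading_{\horizon+\alpha}}{\Omega^+_{\horizon,\alpha}(f^+)}$ to be \emph{constant} for $\alpha$ small enough (all inflows coinciding until $\horizon$ have outflows coinciding until $\horizon+\alpha$), hence trivially Lipschitz. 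The theorem then gives the contraction property~\ref{contraction-prop}, and since $\EdgeLoading$ is also locally bounded, a unique coherent flow until $\infty$.

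The one step requiring genuine care rather than a one-line citation is the handling of the \emph{restricted} operators in parts~\ref{cor:prescriptive-operators:infinite-causal} and~\ref{cor:prescriptive-operators:lipschitz}: one must confirm that passing from $\EdgeLoading_{\horizon+\alpha}$, $\ROp_{\horizon+\alpha}$ to their restrictions to $\Omega^+_{\horizon,\alpha}(f^+)$, $\Omega_{\horizon,\alpha}(f)$ preserves, respectively, sequential weak-weak continuity, a sequentially weakly closed graph, and Lipschitz continuity. As indicated above this reduces to the weak closedness of these affine extension sets, so it is not difficult, but it is the only non-formal ingredient of the argument.
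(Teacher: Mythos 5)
Your proposal is correct and follows the same chain of citations as the paper's own proof, which is just a one-line list of the results to apply: \Cref{thm:existence-finite-time-horizon} for \ref{cor:prescriptive-operators:finite}, \Cref{lem:ExtensionIfContinuous} (which invokes \Cref{lem:existence-using-ext-prop} internally) for \ref{cor:prescriptive-operators:infinite-causal}, \Cref{lem:unique-extension-if-strictly-causal} together with \Cref{prop:existence-if-strictly-causal} for \ref{cor:prescriptive-operators:strictly-causal}, and \Cref{thm:unique-existence-for-lipschitz-operators} for \ref{cor:prescriptive-operators:lipschitz}, all filtered through \Cref{lem:prescriptive-operators-closed-graph-theorem} where continuity of $\ROe_\horizon$ has to be translated into weak closedness of the graph of $\ROp_\horizon$. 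You additionally spell out the routine but worthwhile verifications the paper leaves implicit: that restricting to the weakly closed convex sets $\Omega^+_{\horizon,\alpha}(f^+)$ and $\Omega_{\horizon,\alpha}(f)$ preserves sequential weak-weak continuity, sequential weak closedness of the graph, and Lipschitz continuity, and that uniform strict causality makes the restricted edge-loading map constant (hence trivially Lipschitz) for small $\alpha$, mirroring the remark after \Cref{thm:unique-existence-for-lipschitz-operators}.
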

\newcommand{\proofCorPrescriptiveOperators}[1][Proof]{
\begin{proof}[#1]
	For statements~\ref{cor:prescriptive-operators:finite} and~\ref{cor:prescriptive-operators:infinite-causal}, we apply \Cref{thm:existence-finite-time-horizon} and \Cref{thm:ExtensionIfContinuous}, respectively,  together with \Cref{lem:prescriptive-operators-closed-graph-theorem,lem:decomposable-implies-regular}.
	For statement~\ref{cor:prescriptive-operators:strictly-causal}, we apply \Cref{prop:existence-if-strictly-causal} and \Cref{lem:unique-extension-if-strictly-causal}.
	Finally, for statement~\ref{cor:prescriptive-operators:lipschitz}, we apply \Cref{thm:unique-existence-for-lipschitz-operators}.
\end{proof}
}
\proofCorPrescriptiveOperators

\begin{remark}
	This corollary generalizes the existence and uniqueness results for prescriptive operators presented by \citet{Bayen2019}:
	They consider causal operators and show (a) existence for ``continuous'' such operators on finite horizons \revised{where they assume strong-strong continuity from edge flow volumes $(\theta\mapsto \int_0^{\theta} f^+ - f^- \diff\leb)$ in $(C([0, \horizon])^{E\times I}, \normFct_\infty)$ to flow splits in $(\pInt[1][[0, \horizon]]^{E\times I}, \normFct_1)$, a similar} continuity assumption as the one specified for $\ROe_T$ in \Cref{cor:prescriptive-operators}~\ref{cor:prescriptive-operators:finite}{} and~\ref{cor:prescriptive-operators:infinite-causal}{} \revised{(see also \Cref{remark:cumulative-flow})}, (b) unique existence for so-called ``delay-type'' routing operators that are special cases of uniformly strictly causal operators and, thus, included in \ref{cor:prescriptive-operators:strictly-causal}, and (c) unique existence for ``Lipschitz-continuous'' operators with a condition similar to the Lipschitz condition on $\ROe$ in \ref{cor:prescriptive-operators:lipschitz}.
\end{remark}

One tempting question is whether, for a given set of cost predictors, there exists a \emph{prescriptive} routing operator for which the set of coherent flows \revMinor{is exactly the set of \DPEs{} \revised{(\Cref{def:dpe})} (or at least a subset of the \DPEs)}.
Of course, for any given \DPE{} $f$, we can ``a posteriori'' construct a prescriptive routing operator as $r_{e,i} = f_{e,i}^+ / (\sum_{e'\in\edgesFrom{v}} f_{e,i}^+)$ (whenever well-defined) such that (only) $f$ is coherent with respect to this operator.
However, the use of the equilibrium solution $f$ in the definition of the behavioural model is not satisfying as usually we want to find $f$ as the solution to the model.

Assuming all cost predictors are causal, another attempt would be to choose for every node and every possible set of outgoing edges some fixed flow-split over the active outgoing edges.
Such a routing operator would then still be prescriptive, and any coherent flow would be a \DPE.
In fact, for IDE \revised{(\Cref{ex:ide})}, this is exactly what is discussed in \textcite[Routing 4.10]{Bayen2019}:
They define a routing operator that equally distributes flow over all outgoing edges starting a currently shortest path.
Any coherent flow would then be an IDE.

However, as already noted in \cite{Bayen2019}, this routing operator does not satisfy the continuity conditions{} introduced there (nor the ones used in this paper) and, hence, they cannot show existence of such flows.{} 
In fact, it is not hard to see that coherent flows \wrt such a routing operator need not exist in general:

\newcommand{\exampleIdeNonPrescriptive}{
	\begin{example}\label{ex:NonExistenceNaiveIDE}
		Consider a single-source, single-sink network with two parallel paths consisting of two edges each.
		We use Vickrey's queuing model \revised{(\Cref{sec:vickrey}) as} the physical model.
		All edges have a free flow travel time of $1$.
		The edges on \revMinor{the bottom} path have a capacity of \revMinor{$2$} and \revMinor{$1$}, respectively, while \revMinor{both} edges on the \revMinor{top} path have a capacity of \revMinor{$1$} (\cf \Cref{fig:NonExistenceNaiveIDE}).
		
		\begin{figure}[ht]
			\centering
			\begin{tikzpicture}
	\node[vertex](s) at (0,0) {$s$};
	\node[vertex](t) at (6,0) {$\dest$};
\node[vertex](v) at (3,-0.5) {$w$};
	\node[vertex](w) at (3, 0.5) {$v$};
	\draw[edge](s) to[out=-35,in=180]node[below]{2,1} (v);
	\draw[edge](v) to[out=0,in=215]node[below]{1,1} (t);
	\draw[edge](s) to[out=35,in=180]node[above]{1,1} (w);
	\draw[edge](w) to[out=0,in=145]node[above]{1,1} (t);
	
	\draw[line width=7pt,<-,flowCol] (s) -- +(-1,0)coordinate(inflow);
	\node[left,flowColLabel]at(inflow){$u_s \equiv 3$};
\end{tikzpicture} 			\caption{A network in Vickrey's queuing model with edge labels $(\nu_e, \ffttime_e)$ with capacity $\nu_e$ and free-flow travel time $\ffttime_e$.}\label{fig:NonExistenceNaiveIDE}
		\end{figure}
		
		For a constant network inflow rate of \revMinor{$3$}, this network has a unique IDE \revised{(see \Cref{fig:NonExistenceNaiveIDEAnimation})}.
		It starts with a flow split in a ratio of \revMinor{$1:2$} between the upper and the lower paths\revised{, since this is the unique ratio that avoids the buildup of any queues at the outgoing edges of $s$ and keeps the instantaneous travel times of both paths minimal (during this initial phase).}
		Starting at time~$1$ a queue builds up on the second edge of the \revMinor{lower} path at a rate of~\revMinor{$1$} and the only possible flow split \revMinor{at~$s$} is \revMinor{$2:1$}\revised{, which then also leads to an emerging queue at the first edge of the upper path.}
		\revMinor{Starting a}t time~$2$ the queue length on the second edge of the \revMinor{lower} path remains constant at~\revMinor{$1$} while the first edge of the \revMinor{upper} path \revMinor{maintains} a \revMinor{constant} queue length of~\revMinor{$1$}.
		Thus, the split changes back to \revMinor{$1:2$} until time~$3$.
		\revised{This periodic behavior continues forever, where the queues of the first edge on the upper path and of the second edge on the lower path grow in lockstep every other unit time interval at a rate of~$1$.}
		
		\begin{figure}[ht]
			\centering
			\subfloat[Four snapshots from the evolution of the unique IDE.]{
				\begin{minipage}{\textwidth}

					\includeinkscape[width=0.49\textwidth]{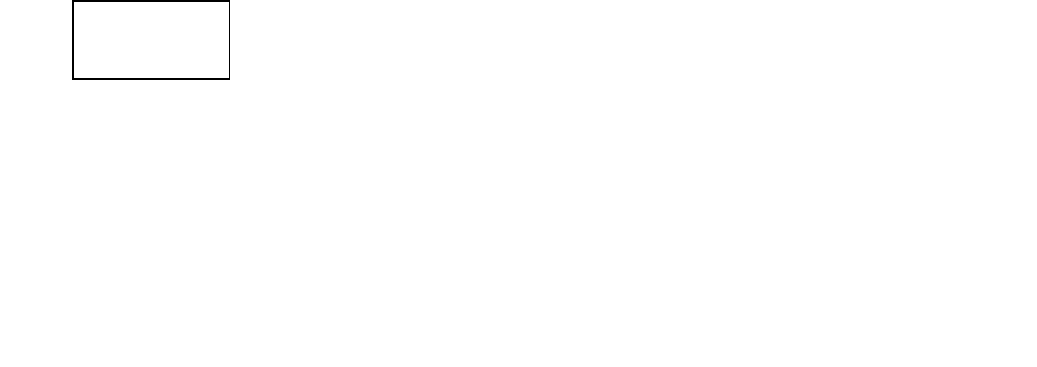_tex}
					\includeinkscape[width=0.49\textwidth]{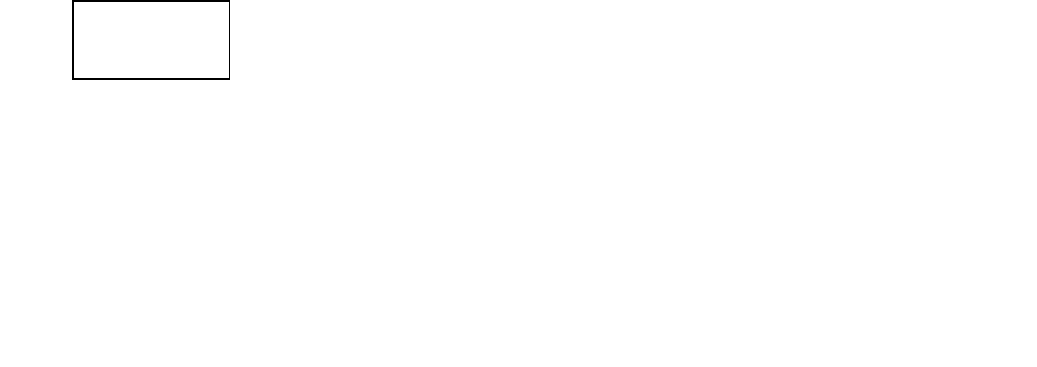_tex}

					\includeinkscape[width=0.49\textwidth]{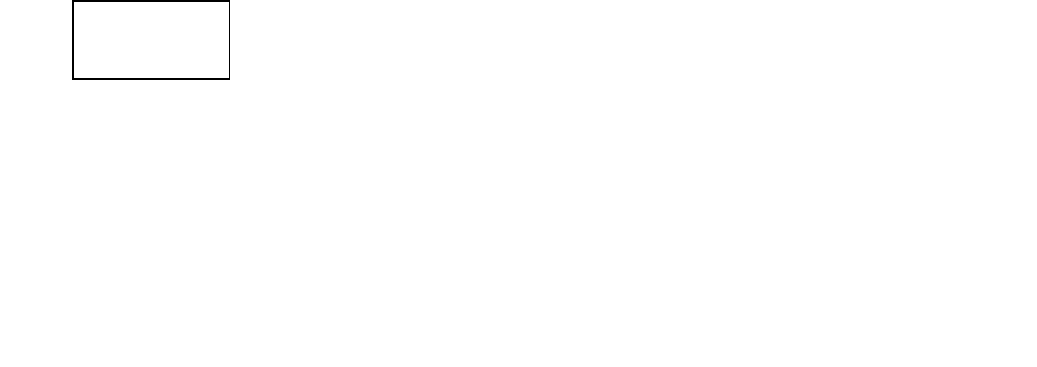_tex}
					\includeinkscape[width=0.49\textwidth]{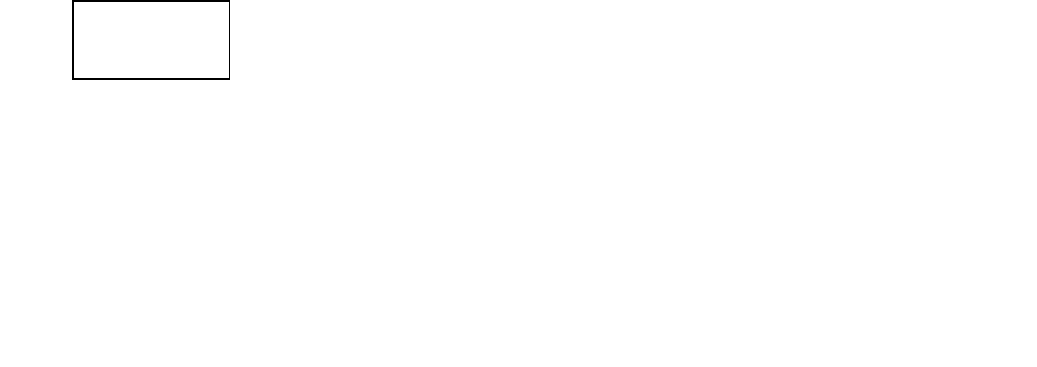_tex}
				\end{minipage}
			}
			\\
			\subfloat[The (coinciding) instantaneous path travel times (left), and the flow split $\nicefrac{f_e^+}{u_s}$ at node $s$ into the lower and upper edge (right).]{
				\begin{tikzpicture}
					\begin{groupplot}[group style={group size=2 by 1, horizontal sep=2.5cm}, width=0.45\textwidth, height=5cm]

						\nextgroupplot[xlabel={Time}, ylabel={instantaneous travel time}, grid=major, legend style={anchor=north west, at={(0.0, 1.0)},
							legend image code/.code={
								\draw [mark repeat=2,mark phase=2,##1]
									plot coordinates {
									(0cm,0cm)
									(8pt,0cm)
									(16pt,0cm)
									};
							}
						}]

							\addplot[thick, cyan, restrict x to domain=-inf:6, dashed, dash pattern=on 4pt off 4pt on 4pt off 4pt] table[x=x,y=highpath_cost,col sep=comma]{py/2_ide.csv};
							\addlegendentry{upper path}

							\addplot[thick, orange, restrict x to domain=-inf:6, dashed, dash pattern=on 4pt off 4pt on 4pt off 4pt, dash phase=-4pt] table[x=x,y=lowpath_cost,col sep=comma]{py/2_ide.csv};
							\addlegendentry{lower path}

						\nextgroupplot[xlabel={Time}, ylabel={flow split at $s$}, grid=major, ymax=1.0, ymin=0.0, legend style={anchor=north east, at={(1.0, 1.0)}}, ytick={0, 1/3, 2/3, 1}]
							\addplot[thick, cyan, restrict x to domain=-inf:6] table[x=x,y=edge2,col sep=comma]{py/2_ide.csv};
							\addlegendentry{upper edge}
							\addplot[thick, orange, restrict x to domain=-inf:6] table[x=x,y=edge0,col sep=comma]{py/2_ide.csv};
							\addlegendentry{lower edge}
					\end{groupplot}
				\end{tikzpicture}
			}

			\marginnote{}
			\caption{The unique IDE for the network from \Cref{fig:NonExistenceNaiveIDE} as described in \Cref{ex:NonExistenceNaiveIDE}.
			Here, each infinitesimal particle, that starts at some time $\theta$ in $s$, chooses a path minimizing the instantaneous travel time.}\label{fig:NonExistenceNaiveIDEAnimation}
		\end{figure}
		
		Note that the flow \revMinor{split} at node~$s$ changes multiple times even though at all times \revised{the instantaneous travel times of both paths coincide, and thus} both outgoing edges are \revMinor{always} active.
		Hence, no choice of a fixed flow \revMinor{split} over active edges can lead to the existence of a flow \revMinor{consistent with} the resulting routing operator.
	\end{example}
}
\exampleIdeNonPrescriptive

\section{Stochastic Prediction Equilibrium}\label{sec:SPE}

\begin{revisedEnv}
	Let us take a step back and consider \emph{static} traffic assignment for a moment.
	In static traffic assignment, a stochastic user equilibrium reflects the fact that each agent may have a slightly different perception of travel cost.
	The standard model proposed by \citet{Daganzo1977} thus assumes that path travel times are flow-dependent random variables.
	A flow is then a stochastic user equilibrium if and only if every path is used by (at most) the proportion of demand that perceives this path as their best option, according to the distribution of the random variables.
	This models exactly the situation in which every infinitesimal agent is assigned a (random) vector of path travel times, and chooses a path minimizing travel time.
	Using a specific distribution of path travel times then yields the well-known logit assignment model.
\end{revisedEnv}

In this section, we use our framework to introduce \revised{these} stochastic effects into the concept of prediction equilibria \revised{for \emph{dynamic} traffic assignment}.
For this, we define the new concept of a \emph{stochastic prediction equilibrium}.
The idea here is to augment \revised{path cost} predictions with random noise so that the effective
cost predictor becomes a random variable. 
As our main result, we show existence and uniqueness of stochastic prediction equilibria under mild assumptions.
\revised{In particular, we show that under natural assumptions, the resulting routing operator for stochastic prediction equilibria becomes prescriptive.
Thus, this analysis explains how the incorporation of stochastic errors in the perception of the infinitesimal agents can lead to some of the previously studied prescriptive routing operators, and  highlights their underlying stochastic assumptions.}

\newcommand*{\randDist}{U}

\revised{Compared to \DPE{} (\Cref{def:dpe}),} we model \revMinor{the stochastic} noise by assuming that the cost predictor $\hat C_{i,p}$ is no longer \revMinor{a} fixed \revMinor{function $\IR\times\flowSet\to \IR$} but rather a random variable \revMinor{of such functions}.
More specifically, a particle of commodity $i$ that arrives at a node $v\neq \dest_i$ at some time $\bar\theta$ is assigned a cost predictor \[ 
	\hat C: \IR \times \flowSet \to \IR^\PathSet,
	\quad
	(\theta, f)\mapsto (\hat C_{p}(\theta, f))_{p\in\PathSet}
\] according to the random distribution of a probability measure $\Prob_i$ over such prediction functions.
The particle then evaluates the cost $\hat C_p(\bar\theta)$ for every $v$-$\dest_i$-path $p$ and chooses to enter an edge that lies on a path minimizing this value.

Let $\tilde E_i(\theta, f, \hat C)$ denote this set of perceived active edges at time $\theta$ \wrt flow $f$ and predictor~$\hat C$\revised{, \ie $\tilde E_i(\theta, f, \hat C)$ is the set of edges $e=vw\in E$ for which there exists a $v$-$d_i$-path starting with $e$ that minimizes $\hat C_p(\theta, f)$ among all $v$-$\dest_i$-paths.}
Then, for any time $\theta \in \IR$, any node $v \in V$, any commodity $i \in I$ and any subset of edges $M \subseteq \edgesFrom{v}$ leaving~$v$ the probability that $M$ is perceived as the set of active edges leaving~$v$ by particles of commodity~$i$ at time~$\theta$ is defined as 
\[
    \pi_{v,M,i}(\theta, f) \coloneqq \Prob_i(
        \set{
            \hat C
            |
            M = \tilde E_i(\theta, f, \hat C) \cap \edgesFrom{v}
        }
    ).
\]

We assume that all particles of commodity~$i$ carry out the same experiment independently of each other and, thus, among all particles of commodity~$i$ entering node $v$, the proportion of particles of that commodity perceiving the set $M$ as the set of active outgoing edges \revMinor{of $v$} at time $\theta$ is exactly $\pi_{v,M,i}(\theta,f)$.
\revised{For this proportion of flow, any choice of edge in $M$ is an optimal choice (as in the perception of these infinitesimal agents, all edges in $M$ lie on a shortest $v$-$d_i$-path).
Thus, any partition $(r_{M,e,i})_{e\in M} \subset \IRnn^M$ of $\pi_{v,M,i}(\theta, f)$ into the edges in $M$ is a valid ``subrouting'' for the proportion $\pi_{v,M,i}(\theta, f)$;} here, $r_{M,e,i}$ is the share of the particles entering node $v$ that perceive $M$ as their set of active edges and choose the edge $e$ as their next edge.
\revised{Summing over all these subroutings, yields the overall flow split.
More precisely, let} $\mathcal E$ denote the set of all pairs $(M, e)$ of the form $M\subseteq \edgesFrom{v}$ and $e\in M$ for any $v\in V$.
We can then define the following routing operator
\begin{equation}\label{eq:stochastic-routing-operator}
	\newcommand{\conds}{\begin{aligned}
		\forall v, M\neq\emptyset, i: &\quad \textstyle \sum_{e \in M}r_{M,e,i} = \pi_{v,M,i}(\emptyarg, f)
		\end{aligned}}
    \ROp(f) \coloneqq \Set{
        \Bigl(\sum_{M \ni e}r_{M,e,i}\Bigr)_{e,i}
        |
        \begin{array}{c}
            (r_{M,e,i})_{(M,e,i)} \in \revised{\splitFcts}^{\mathcal E \times I}: \\[1ex]
			\conds
        \end{array}
	}.
\end{equation}

\begin{definition}[SPE]\label{def:SPE}
	\revised{Given probability measures $\Prob_i$ over prediction functions $\hat C$,} a \emph{stochastic prediction equilibrium (SPE)} until time $\horizon$ is a flow that is coherent (until $\horizon$) with respect to a given \revMinor{edge loading operator} $\EdgeLoading$ and the routing operator in \eqref{eq:stochastic-routing-operator}.
\end{definition}

Throughout this section, we assume that the mapping $(\hat C, \theta)\mapsto \hat C_p(\theta, f)$ is $(\Prob_i \otimes \leb)$-measurable for every flow $f$, path $p$ and commodity $i$.
This ensures that $\pi_{v,M,i}(\emptyarg, f)$ is measurable and, thus, that $\ROp$ is well-defined:

\newcommand{\propMeasurabilityStochasticRoutingOperator}{
\begin{proposition}\label{prop:measurability-stochastic-routing-operator}
	If $(\hat C, \theta)\mapsto \hat C_p(\theta, f)$ is $(\Prob_i \otimes \leb)$-measurable for all paths $p$, then $\pi_{v,M,i}(\emptyarg, f)$ is measurable.
\end{proposition}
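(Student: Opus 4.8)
The plan is to reduce the statement to a single application of Tonelli's theorem, once we have verified that the event ``$M$ is the perceived set of active outgoing edges at $v$'' is a measurable subset of the product space on which $\Prob_i\otimes\leb$ lives.

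First I would unwind the definition of $\tilde E_i$. For an edge $e=vw\in\edgesFrom{v}$ one has $e\in\tilde E_i(\theta,f,\hat C)$ precisely when some simple $v$-$t_i$-path $p$ with first edge $e$ minimises $q\mapsto\hat C_q(\theta,f)$ over the finite set $\PathSet_{v,t_i}$. For $p,q\in\PathSet_{v,t_i}$, set $A^{p,q}\coloneqq\set{(\hat C,\theta) | \hat C_p(\theta,f)\le\hat C_q(\theta,f)}$; this set is $(\Prob_i\otimes\leb)$-measurable, being the preimage of $(-\infty,0]$ under $(\hat C,\theta)\mapsto\hat C_p(\theta,f)-\hat C_q(\theta,f)$, a difference of maps that are measurable by hypothesis. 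Since $\PathSet_{v,t_i}$ is finite,
\[
	\set{(\hat C,\theta) | e\in\tilde E_i(\theta,f,\hat C)}
	=\bigcup_{\substack{p\in\PathSet_{v,t_i}\\ p_1=e}}\ \bigcap_{q\in\PathSet_{v,t_i}}A^{p,q}
\]
is a finite Boolean combination of the measurable sets $A^{p,q}$ and thus measurable; intersecting these events over $e\in M$ with the complements of the corresponding events over $e\in\edgesFrom{v}\setminus M$ shows that
\[
	A\coloneqq\set{(\hat C,\theta) | M=\tilde E_i(\theta,f,\hat C)\cap\edgesFrom{v}}
\]
is $(\Prob_i\otimes\leb)$-measurable.

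Then I would apply Tonelli's theorem to the non-negative, $(\Prob_i\otimes\leb)$-measurable function $\CharF_A$. Since $\Prob_i$ is a probability measure and $\leb$ is $\sigma$-finite, the partial integral
\[
	\theta\mapsto\int\CharF_A(\hat C,\theta)\diff\Prob_i(\hat C)=\Prob_i\bigl(\set{\hat C | (\hat C,\theta)\in A}\bigr)=\pi_{v,M,i}(\theta,f)
\]
is a measurable function of $\theta$, which is exactly the claim. (If one reads ``$(\Prob_i\otimes\leb)$-measurable'' as measurability with respect to the \emph{completed} product $\sigma$-algebra, Tonelli still yields the conclusion after a harmless modification of $\pi_{v,M,i}(\emptyarg,f)$ on a $\leb$-null set, which does not affect the almost-everywhere conditions in which $\pi_{v,M,i}$ subsequently appears.)

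The only delicate point — and the closest thing to an obstacle — is the first step: expressing ``$M$ is the perceived active set at $v$'' as a measurable set. This, however, needs nothing deeper than the observation that ``being an $\argmin$ over a finite index set'' is a Borel condition on the finitely many reals $(\hat C_p(\theta,f))_{p\in\PathSet_{v,t_i}}$, so no measurable-selection argument is required; everything else (differences of measurable functions, finite Boolean combinations, and the measurability of partial integrals of non-negative functions) is entirely standard.
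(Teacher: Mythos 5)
Your proof is correct and follows essentially the same route as the paper's: both arguments show that the event $\{(\hat C,\theta)\mid M=\tilde E_i(\theta,f,\hat C)\cap\edgesFrom{v}\}$ is $(\Prob_i\otimes\leb)$-measurable via a finite Boolean combination built from measurable comparisons of the finitely many path costs (the paper packages these as $h_e^{-1}(0)$ with $h_e=\min_{\PathSet_e}\hat C_p-\min_{\PathSet_{v,t_i}}\hat C_p$, you use the pairwise sets $A^{p,q}$, but the resulting set is the same), and then both invoke measurability of sections of a measurable set in a product space (the paper cites Klenke~14.19; your Tonelli argument is the same fact).
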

}
\newcommand{\proofPropMeasurabilityStochasticRoutingOperator}[1][Proof]{
\begin{proof}[#1]
	The function $(\hat C,\theta) \mapsto \min_{p\in P} \hat C_p(\theta, f)$ is $(\Prob_i\otimes \leb)$-measurable for any $P\subseteq \PathSet_{v, \dest_i}$.
	Let $\PathSet_e$ denote the subset of paths $p\in\PathSet_{v,\dest_i}$ that use $e$ as their first edge.
	Then, the function $h_e(\hat C, \theta)\coloneqq \min_{p\in\PathSet_e} \hat C_p(\theta, f) - \min_{p\in\PathSet_{v,\dest_i}} \hat C_p(\theta,f)$ is measurable as well as the set $\Theta_{e} \coloneqq h_e^{-1}(0)$ (which is the set of pairs $(\hat C, \theta)$ for which $e$ is perceived active at time $\theta$).
	
	Now, for any specific $\theta$ we have \begin{equation*}
		\set{\hat C | M = \tilde E_i(\theta,f, \hat C) \cap \edgesFrom{v}}
		=
		\Big(
			\big(\bigcap_{e\in M} \Theta_e\big)\setminus\bigcup_{e\in\edgesFrom{v}\setminus M} \Theta_e
		\Big)_{\theta}
		\eqqcolon
		(\Theta_M)_\theta,
	\end{equation*}
	where we denote $(\Theta_M)_\theta \coloneqq \set{ \hat C | (\hat C, \theta)\in \Theta_M}$.
	As $\Theta_M$ is measurable, the function $\pi_{v,M,i}(\emptyarg, f) = \theta\mapsto \Prob_i((\Theta_M)_\theta)$ is also measurable by Fubini's Theorem (\cf \citealt[Theorem~14.19]{Klenke2020}).
\end{proof}
}
\propMeasurabilityStochasticRoutingOperator\proofPropMeasurabilityStochasticRoutingOperator

\begin{observation}
	If all $\Prob_i$ are trivial probability measures assigning probability 1 to a single prediction function $\hat C_i$,
	then the \SPE{} routing operator in \eqref{eq:stochastic-routing-operator} coincides with the \DPE{} routing operator \eqref{eq:RoutingOperatorPredictionEquilibrium} induced by $(\hat C_i)_i$.
\end{observation}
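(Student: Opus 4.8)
The plan is to prove, for every flow $f$, the set equality $\ROp^{\mathrm{SPE}}(f)=\ROp^{\mathrm{DPE}}(f)$, where $\ROp^{\mathrm{SPE}}$ is the operator from \eqref{eq:stochastic-routing-operator} with the trivial measures $\Prob_i=\delta_{\hat C_i}$ and $\ROp^{\mathrm{DPE}}$ is the operator from \eqref{eq:RoutingOperatorPredictionEquilibrium} built from $(\hat C_i)_i$. Everything hinges on an explicit description of the probabilities $\pi_{v,M,i}$ in this degenerate situation. First I would note that the unique $\Prob_i$-outcome $\hat C=\hat C_i$ satisfies $\tilde E_i(\theta,f,\hat C_i)=\pEdges_i(\theta,f)$ by the definition of ``perceived active edge'', and that $\Prob_i$ being a Dirac measure forces $\pi_{v,M,i}(\emptyarg,f)$ to be $\{0,1\}$-valued; concretely, writing $M^\ast_{v,i}(\theta)\coloneqq\pEdges_i(\theta,f)\cap\edgesFrom{v}$, one gets $\pi_{v,M,i}(\theta,f)=1$ precisely when $M=M^\ast_{v,i}(\theta)$ and $\pi_{v,M,i}(\theta,f)=0$ otherwise. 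I would also record the elementary fact that $M^\ast_{v,i}(\theta)\neq\emptyset$ if and only if $v\neq t_i$: at $v=t_i$ no outgoing edge is ever active, and at $v\neq t_i$ the set $\argmin_{q\in\PathSet_{v,t_i}}\hat C_{i,q}(\theta,f)$ is non-empty (as $\PathSet_{v,t_i}$ is non-empty and finite) and hence contributes a first edge.

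For the inclusion $\ROp^{\mathrm{SPE}}(f)\subseteq\ROp^{\mathrm{DPE}}(f)$, take $r=(\sum_{M\ni e}r_{M,e,i})_{e,i}$ with $r_{M,e,i}\geq 0$ and $\sum_{e\in M}r_{M,e,i}=\pi_{v,M,i}(\emptyarg,f)$. Non-negativity of $r$ is immediate. Re-indexing gives $\sum_{e\in\edgesFrom{v}}r_{e,i}=\sum_{\emptyset\neq M\subseteq\edgesFrom{v}}\sum_{e\in M}r_{M,e,i}=\sum_{\emptyset\neq M\subseteq\edgesFrom{v}}\pi_{v,M,i}(\emptyarg,f)$, which (a.e.) equals $1-\pi_{v,\emptyset,i}(\emptyarg,f)=\CharF_{v\neq t_i}$ by the two facts above. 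Finally, if $r_{e,i}(\theta)>0$, then $r_{M,e,i}(\theta)>0$ for some $M\ni e$, whence $\pi_{v,M,i}(\theta,f)\geq r_{M,e,i}(\theta)>0$, so $\pi_{v,M,i}(\theta,f)=1$, i.e.\ $M=M^\ast_{v,i}(\theta)$ and thus $e\in M^\ast_{v,i}(\theta)\subseteq\pEdges_i(\theta,f)$. Hence $r\in\ROp^{\mathrm{DPE}}(f)$.

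For the reverse inclusion, given $r\in\ROp^{\mathrm{DPE}}(f)$ I would set $r_{M,e,i}\coloneqq r_{e,i}\cdot\pi_{v,M,i}(\emptyarg,f)$ for $(M,e)\in\mathcal E$ with $v=\tail(e)$; these are non-negative and locally $p$-integrable (a product of an $L^p_{\mathrm{loc}}$-function with the bounded measurable function $\pi_{v,M,i}(\emptyarg,f)$, the measurability of the latter being \Cref{prop:measurability-stochastic-routing-operator}), so they form a valid witness family in $\pLocInt^{\mathcal E\times I}$. Then $\sum_{e\in M}r_{M,e,i}=\pi_{v,M,i}(\emptyarg,f)\cdot\sum_{e\in M}r_{e,i}$, which equals $\pi_{v,M,i}(\emptyarg,f)$ a.e.: where $\pi_{v,M,i}$ vanishes this is clear, and where $\pi_{v,M,i}(\theta,f)=1$ we have $M=M^\ast_{v,i}(\theta)$ with $v\neq t_i$, so $\sum_{e\in M}r_{e,i}(\theta)=\sum_{e\in\edgesFrom{v}}r_{e,i}(\theta)=1$ using that $r_{e,i}(\theta)=0$ a.e.\ for edges outside $\pEdges_i(\theta,f)$ (the support condition of $\ROp^{\mathrm{DPE}}$). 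Likewise, $\sum_{M\ni e}r_{M,e,i}=r_{e,i}\cdot\sum_{M\ni e}\pi_{v,M,i}(\emptyarg,f)=r_{e,i}\cdot\CharF_{\{\theta:\,e\in\pEdges_i(\theta,f)\}}=r_{e,i}$ a.e., again by the support condition. Therefore this witness family realises exactly $r$, so $r\in\ROp^{\mathrm{SPE}}(f)$.

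The argument is essentially bookkeeping, so there is no real obstacle; the two points that require a little care are the measurability of the disaggregated weights (which comes for free by writing them through $\pi_{v,M,i}$ and invoking \Cref{prop:measurability-stochastic-routing-operator}) and keeping the ``almost everywhere'' qualifier of the $\ROp^{\mathrm{DPE}}$ support condition consistent across all the pointwise manipulations with the $\{0,1\}$-valued $\pi$'s --- in particular the normalisation $\sum_{e\in M^\ast_{v,i}(\theta)}r_{e,i}(\theta)=\CharF_{v\neq t_i}$, which relies on $M^\ast_{v,i}(\theta)$ being non-empty exactly for $v\neq t_i$.
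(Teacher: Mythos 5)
Your argument is correct and fills in the bookkeeping the paper treats as self-evident (the statement is an unproven \emph{Observation}). The reduction to $\pi_{v,M,i}(\theta,f)=\CharF\{M=\pEdges_i(\theta,f)\cap\edgesFrom{v}\}$, the fact that $\pEdges_i(\theta,f)\cap\edgesFrom{v}=\emptyset$ iff $v=t_i$ (using non-emptiness of $\PathSet_{v,t_i}$), and the explicit witness $r_{M,e,i}=r_{e,i}\cdot\pi_{v,M,i}(\emptyarg,f)$ for the reverse inclusion are precisely the ingredients one needs; the a.e.\ care around the DPE support condition is handled correctly. One could shorten the proof by first invoking the paper's alternative description of the SPE operator (\Cref{lem:CharacterizationStochasticIDERouting}), where $\rho_{v,M,i}$ degenerates to $\CharF\{\pEdges_i(\theta,f)\cap\edgesFrom{v}\subseteq M\}$ and the three constraints become literally the DPE conditions, but since that lemma appears later in the paper your direct argument from \eqref{eq:stochastic-routing-operator} is the appropriate one.
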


Before diving into the properties of \SPEs, we give a natural example of a distribution over the predictors $\hat C$ inducing a stochastic version of \IDEs:

\begin{example}[Stochastic IDE]\label{ex:StochasticIDE}
	We introduce a stochastic variant of instantaneous dynamic equilibria \revised{from \Cref{ex:ide}}:
	In this model,
	every time a particle arrives at an intermediate node, it retrieves the current, instantaneous travel times of each edge disturbed by a stochastic measurement error.
	Let $\ttime_e(\theta, f)$ denote the (actual) travel time of edge $e$ when entering $e$ at time $\theta$.
	Then, the particle retrieves the value $\hat \ttime_e(\theta, f, \varepsilon_{e,i})\coloneqq \ttime_e(\theta, f) + \varepsilon_{e,i}$ where the measurement error $\varepsilon_{e,i}$ is distributed according to some random distribution $U_{e,i}$.
	The particles now aim to minimize their travel time according to the instantaneous, erroneous path costs $\hat C_{p,i}(\theta, f, \boldeps) = \sum_{e\in p} \hat \ttime_e(\theta, f, \varepsilon_{e,i})$ where $\boldeps$ denotes the error vector $(\varepsilon_{e,i})_e$.
	Accordingly, the distributions $(U_{e,i})_e$ induce the probability measure $\Prob_i$.
	To ensure that $\ROp$ is well-defined, we assume that $\ttime_e(\emptyarg, f)$ are measurable functions.
	A flow that is coherent \wrt a given \revMinor{edge loading operator} $\EdgeLoading$ and this routing operator is called a \emph{stochastic IDE}.

	\begin{revisedEnv}
	Let us reconsider the concrete network from \Cref{ex:NonExistenceNaiveIDE} and compare the (deterministic) IDE depicted in \Cref{fig:NonExistenceNaiveIDEAnimation} with the stochastic IDE model.
	For this, we assume that the measurement errors $\varepsilon_{e,i}$ are independent and normally distributed around $0$ with standard deviation $\ffttime_e / 4$.
	Unlike deterministic IDE, stochastic IDE are \emph{always} unique under natural assumptions on the error destributions, as we will show in \Cref{cor:stochastic-ide}.
	The unique stochastic IDE for this setup is displayed in \Cref{fig:example-stochastic-ide}.

	\begin{figure}[ht]
		\centering
		\subfloat[Four snapshots of the evolution of the unqiue stochastic IDE.]{
			\begin{minipage}{\textwidth}
				\centering
				\includeinkscape[width=0.49\textwidth]{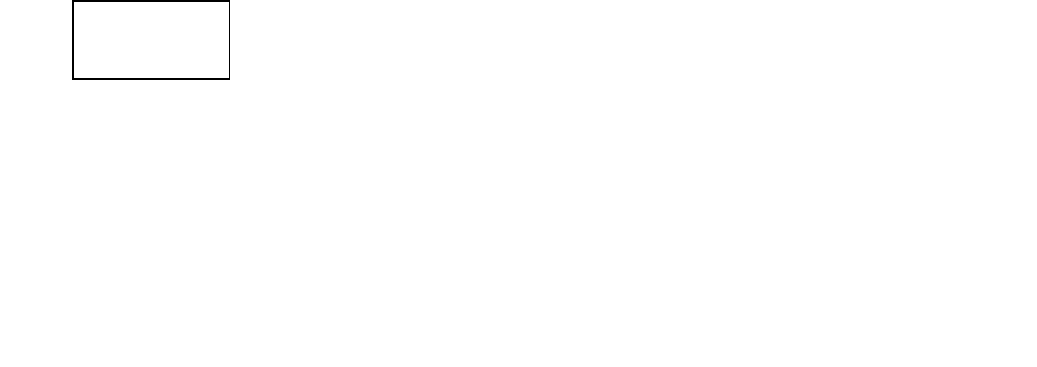_tex}
				\includeinkscape[width=0.49\textwidth]{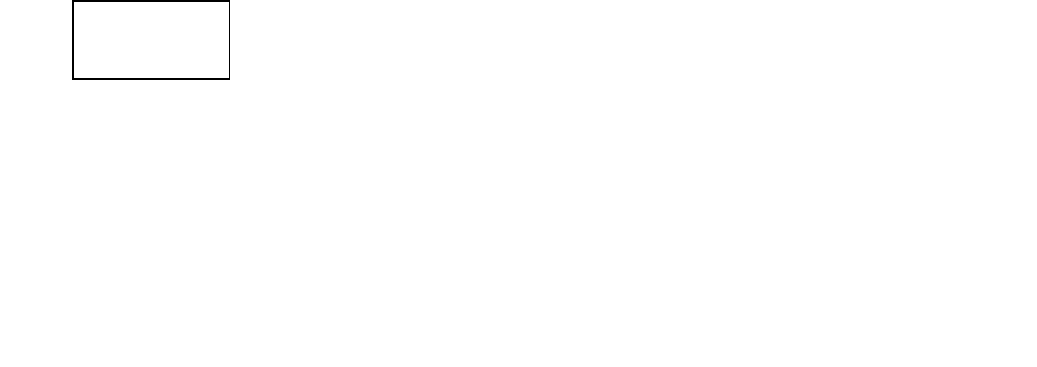_tex}

				\includeinkscape[width=0.49\textwidth]{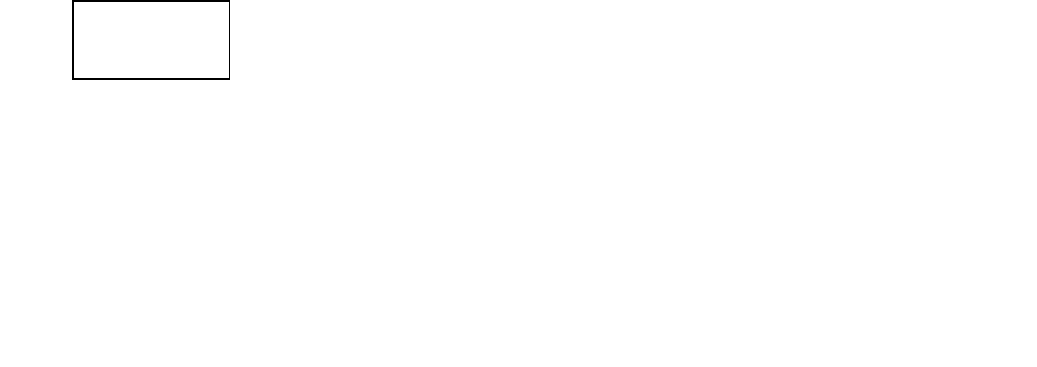_tex}
				\includeinkscape[width=0.49\textwidth]{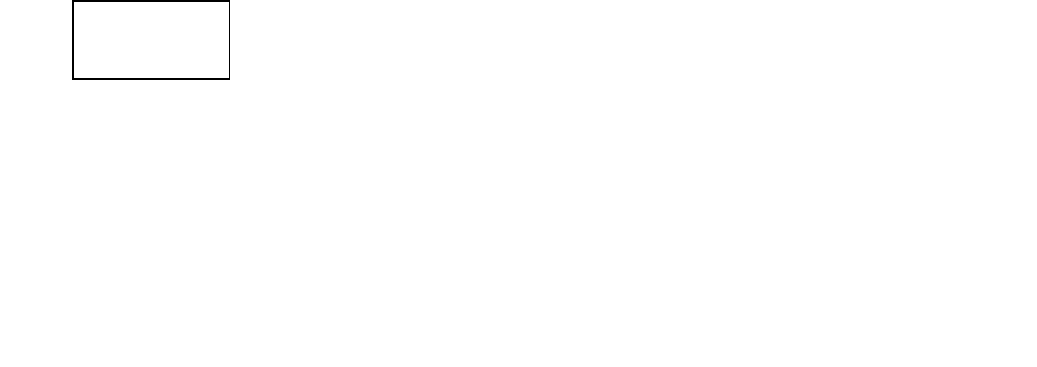_tex}
			\end{minipage}
		}
		\\
		\subfloat[The instantaneous path travel times and the band within the standard deviation of the measured instantaneous travel times (left), and the flow split $\nicefrac{f_e^+}{u_s}$ at node $s$ into the lower and upper edge (right).]{
			\centering
			\begin{tikzpicture}
				\begin{groupplot}[group style={group size=2 by 1, horizontal sep=2.5cm}, width=0.45\textwidth, height=5cm]

					\nextgroupplot[xlabel={Time}, ylabel={\begin{tabular}{c}instantaneous travel time,\\band within standard deviation\end{tabular}}, grid=major, legend style={anchor=north west, at={(0.0, 1.0)}}]
\def\epsBand{0.3535533905932738}

						\addplot [
							draw=none, 
							name path=highpath_upper, 
							restrict x to domain=-inf:6,
							forget plot
						] table [x=x, y expr=\thisrow{highpath_cost} + \epsBand,col sep=comma]{py/2_stochastic_ide_0.25.csv};
						\addplot [
							draw=none, 
							name path=highpath_lower,
							restrict x to domain=-inf:6,
							forget plot
						] table [x=x, y expr=\thisrow{highpath_cost} - \epsBand,col sep=comma]{py/2_stochastic_ide_0.25.csv};
						\addplot [fill=cyan, opacity=0.15, forget plot] fill between[of=highpath_upper and highpath_lower];

						\addplot [
							draw=none, 
							name path=lowpath_upper, 
							restrict x to domain=-inf:6,
							forget plot
						] table [x=x, y expr=\thisrow{lowpath_cost} + \epsBand,col sep=comma]{py/2_stochastic_ide_0.25.csv};
						\addplot [
							draw=none, 
							name path=lowpath_lower,
							restrict x to domain=-inf:6,
							forget plot
						] table [x=x, y expr=\thisrow{lowpath_cost} - \epsBand,col sep=comma]{py/2_stochastic_ide_0.25.csv};
						\addplot [fill=orange, opacity=0.15, forget plot] fill between[of=lowpath_upper and lowpath_lower];

						\addplot[thick, cyan, restrict x to domain=-inf:6] table[x=x,y=highpath_cost,col sep=comma]{py/2_stochastic_ide_0.25.csv};
						\addlegendentry{upper path}

						\addplot[thick, orange, restrict x to domain=-inf:6] table[x=x,y=lowpath_cost,col sep=comma]{py/2_stochastic_ide_0.25.csv};
						\addlegendentry{lower path}

					\nextgroupplot[xlabel={Time}, ylabel={flow split at $s$}, grid=major, legend style={anchor=north east, at={(1.0, 1.0)}}, extra y ticks={1/3, 1/2, 2/3}]
						\addplot[thick, cyan] table[x=x,y=edge2,col sep=comma, restrict x to domain=-inf:6]{py/2_stochastic_ide_0.25.csv};
						\addlegendentry{upper edge}
						\addplot[thick, orange] table[x=x,y=edge0,col sep=comma, restrict x to domain=-inf:6]{py/2_stochastic_ide_0.25.csv};
						\addlegendentry{lower edge}
				\end{groupplot}
			\end{tikzpicture}
		}

		\marginnote{}
		\caption{The unique stochastic IDE for the network in \Cref{fig:NonExistenceNaiveIDE} as described in \Cref{ex:StochasticIDE}. In this example, each infinitesimal particle, that starts at some time $\theta$ in $s$, measures the instantaneous travel times of every edge subject to a normally distributed error with a standard deviation of $\ffttime_e / 4$, and chooses the path that minimizes this cost.}\label{fig:example-stochastic-ide}
	\end{figure}

	This flow begins similarly as the deterministic IDE in the sense that during the interval $[0,1]$, more flow enters the lower edge.
	However, the flow split starts at time $0$ at a ratio of exactly $1.5 : 1.5$ between the upper and lower edge, and then gradually shifts towards the lower edge as a queue starts to build up in front of the first edge of the upper path.
	At time $1$, the flow split reaches a ratio of almost $1 : 2$.
	Then a queue starts to build up at edge $w\dest$
	which in turn leads to a gradual shift of the split towards the upper edge.
	Roughly at time $1.47$, the flow split is equal, and at time $2.13$, the flow split reaches its next local extremum at a ratio of roughly $1.82: 1.18$.
	These oscillations continue with the next extrema roughly at times $3.46$ and $4.84$, however its amplitudes decrease with time, and the flow split ratio converges to an equal split.

	Unlike for the deterministic IDE, the instantaneous travel times of both paths do not coincide, especially in the beginning of the observation period.
	Due to the measurement errors, small changes in the instantaneous travel times only affect the routing decisions of a small portion of the flow, and hence these errors act as a regularization of the routing behaviour.
	Instead of the observed periodic jumps of the flow split of the deterministic IDE (\cf \Cref{fig:NonExistenceNaiveIDEAnimation}), that are necessary to keep the instantaneous travel times of both paths equal at all times, the regularization here prevents such jumps which leads to the described damped oscillations both in the difference in travel times of both paths, and in the flow split.
	\end{revisedEnv}
\end{example}

\begin{revisedEnv}
In both static and dynamic traffic assignment, a commonly used route-choice model is the so-called logit model (\eg \cite{Daganzo1977,Han2003}).
The following example shows that this logit-based route choice (with instantaneous path travel times) emerges as an instantiation of SPE when assuming independent reverse Gumbel distributions for the path cost predictors.
It coincides with the routing operator defined in \cite[Routing 4.12]{Bayen2019}.

\begin{example}[Instantaneous logit-based route choice]
	Consider the following instantiation of stochastic prediction equilibria where we assume that the path cost predictors $\hat C_{p}(\theta, f)$ are distributed in $\Prob_i$ according to independent reverse Gumbel distributions\footnote{
		\revised{The cumulative distribution function of the reverse Gumbel distribution for location parameter $\mu$ and scale parameter $\beta$ is given by $F(x) = 1 - \exp(-\exp( ( x - \mu ) / \beta ))$.
	}}
	with parameters $(\mu_{p}, \beta_i)$ where the location parameter $\mu_p \coloneqq \sum_{e\in p} c_e(\theta, f)$ is the path's instantaneous path travel time, and the scale parameter $\beta_i> 0$ is a commodity-specific constant.
	It should be noted that it is unrealistic to assume that the random variables $\hat C_{p}(\theta, f)$ are independent among all paths $p$ since usually, many paths overlap and thus, their perceived travel times correlate.
	Nevertheless, the independence allows us to use the softmax representation of Gumbel distributions \cite[Lemma~6]{Yellott1977}.
	Therefore, it is easy to see that for a $v$-$\dest_i$-path~$p$, the probability that $p$ minimizes $\hat C_{p}(\theta, f)$ among all $v$-$\dest_i$-paths $\PathSet_{v,\dest_i}$ is given by \[
		\Prob_i\left(\set{ \hat C | \hat C_{p}(\theta, f) = \min_{q\in\PathSet_{v, \dest_i}} \hat C_{q}(\theta, f) }\right)
		= \frac{
			\exp(- \frac{1}{\beta_i} \sum_{e\in p} c_e(\theta, f))
		}{
			\sum_{q\in\PathSet_{v, \dest_i}} \exp(- \frac{1}{\beta_i} \sum_{e\in q} c_e(\theta, f))
		}
	\]
	As we will show in \Cref{prop:stochastic-dpe-prescriptive}, the corresponding routing operator is also prescriptive and, for $e=vw$, given by \[
		\ROe(f)_{e,i} = 
		\sum_{\substack{p\in \PathSet_{v,\dest_i} \\ p_1 = e}}
		\frac{
			\exp(- \frac{1}{\beta_i} \sum_{e\in p} c_e(\theta, f))
		}{
			\sum_{q\in\PathSet_{v, \dest_i}} \exp(- \frac{1}{\beta_i} \sum_{e\in q} c_e(\theta, f))
		}.
	\]
\end{example}
\end{revisedEnv}

We now analyse the properties of \revMinor{general} \SPEs.
\revised{Since the natural definition of the \SPE{} routing operator $\ROp$ in \eqref{eq:stochastic-routing-operator} is not of the form of a decomposable routing operator as in \eqref{eq:decomposability}, we first provide} an equivalent description of $\ROp$\revised{.}
\revised{This allows} us to show that $\ROp_\horizon$ \revised{is in fact decomposable, and thus regular,} under certain additional conditions.
In turn, this enables us to prove existence of \SPEs{} with the developed theory from \Cref{sec:ExistenceUniqueness}.
\newcommand{\notationProbSets}{\begin{notation}We abbreviate the notation of sets of predictors~$\hat C$ by writing $\set{ A(\hat C) }$ instead of $\set{ \hat C | A(\hat C) }$ for any predicate $A$.
	\end{notation}
}\notationProbSets

\begin{lemma}\label{lem:CharacterizationStochasticIDERouting}
	We denote the probability that \revised{$M$ contains} all edges in~$\edgesFrom{v}$ that are perceived as active by particles of commodity~$i$ at time~$\theta$ by
	\begin{equation}\label{eq:spe-rho}
		\rho_{v,M,i}(\theta,f)\coloneqq \Prob_i(\set{ \tilde E_i(\theta,f, \hat C) \cap \edgesFrom{v} \subseteq M })
	\end{equation}
	for all $v\in V$, $M\subseteq \edgesFrom{v}$, $i\in I$, $\theta\in\IR$, and $f\in\flowSet$.
	Then, we have \begin{equation}\label{eq:stochastic-routing-operator-alt-description}
		\newcommand{\conds}{
			\begin{aligned}
			\textstyle \forall v, i: &\textstyle \quad \sum_{e \in \edgesFrom{v}} r_{e,i} = \CharF_{v\neq \dest_i}, \\
			\textstyle \forall i, v\neq \dest_i, M\subseteq \edgesFrom{v}: & \textstyle \quad \sum_{e \in M}r_{e,i} \geq \rho_{v,M,i}(\emptyarg, f) \\
			\end{aligned}
		}\ROp(f) = \Set{
			(r_{e,i})_{(e,i)} \in \splitFcts^{E \times I}
			| \conds
		}.
	\end{equation}
\end{lemma}
\newcommand{\proofPropCharacterizationStochasticIDERouting}[1][Proof]{
\begin{proof}[#1]
	Let $r\in\ROp(f)$.
	Then, for every $e\in E$, $i\in I$, the entry $r_{e,i}$ is of the form $\sum_{M\ni e} r_{M,e,i}$ with $(M,e)\in \mathcal E$.
	For all $v\in V$ it holds that \begin{multline*}
		\sum_{e\in\edgesFrom{v}} r_{e,i}
		= \sum_{e\in\edgesFrom{v}} \sum_{\substack{M\subseteq\edgesFrom{v}\\M\ni e}} r_{M,e,i}
		= \sum_{\substack{M\subseteq\edgesFrom{v}\\M\neq\emptyset}} \sum_{e\in M} r_{M,e,i}
		= \sum_{\substack{M\subseteq\edgesFrom{v}\\M\neq\emptyset}} \pi_{v,M,i}(\emptyarg, f)
		= \CharF_{v\neq \dest_i}.
	\end{multline*}
	Furthermore, for $v\neq \dest_i$ and for every $M\subseteq \edgesFrom{v}$ we have
	\begin{align*}
		\sum_{e \in M} r_{e,i}
		&= \sum_{e\in M} \sum_{\substack{M'\subseteq \edgesFrom{v}\\M'\ni e}} r_{M',e,i}
		\geq \sum_{e\in M} \sum_{\substack{M'\subseteq M\\M'\ni e}} r_{M',e,i}
		= \sum_{M'\subseteq M} \sum_{e\in M'} r_{M',e,i}
		= \sum_{\substack{M'\subseteq M\\ M'\neq\emptyset}} \pi_{v,M',i}(\emptyarg, f)
		\\
		&= \Prob_i(\set{
			\hat C	
			|
			\tilde E_{i}(\theta, f, \hat C) \cap \edgesFrom{v} \subseteq M
		})
		= \rho_{v,M,i}(\theta,f)
	\end{align*}
	where the second to last equation holds because $\tilde E_{i}(\theta, f, \hat C) \cap \edgesFrom{v}$ cannot be the empty set. 

	For the other direction, let $r$ be an element of the right-hand side of~\eqref{eq:stochastic-routing-operator-alt-description}.
	We use the max-flow min-cut theorem to show that a suitable decomposition of each $r_{e,i}$ exists.
	For this, fix some $\theta\in\IR$, $v\in V$, and $i\in I$.
	We need to determine suitable rates $r_{M,e,i}(\theta)$ for $M\subseteq \edgesFrom{v}$ and $e\in M$ such that $r_{e,i}(\theta)=\sum_{M\ni e} r_{M,e,i}(\theta)$, for every $e\in\edgesFrom{v}$, and $\sum_{e\in M}r_{M,e,i} = \pi_{v,M,i}(\theta,f)$, for every $M\subseteq \edgesFrom{v}$ with $M\neq \emptyset$, hold.
	If $v=\dest_i$, we simply set $r_{M,e,i}\coloneqq 0$. 
	This is possible as $r_{e,i} = 0$ holds for every $e\in \edgesFrom{v}$, and for every $M\neq \emptyset$ we have $\pi_{v,M,i}(\theta,f)=0$.

	Otherwise, $v\neq \dest_i$.
	We build an artificial network $(W,A)$ (see \Cref{fig:NetworkMaxFlowArgument}) to apply the max-flow min-cut theorem \revised{(\cf \cite[Theorem~3.10]{Schrijver2003})}.
	To avoid confusion with the ``natural'' edges and nodes of our traffic network, we refer to the directed edges $A$ as \emph{arcs} and to the nodes $W$ as \emph{vertices}.
	The network is a directed acyclic graph consisting of four layers of vertices:
	In the first layer, there is only a single artificial source vertex $s$.
	The second layer consists of a vertex for each subset $M$ of $\edgesFrom{v}$; each such vertex has an incoming arc from $s$ with capacity $\pi_{v,M,i}(\theta,f)$.
	The third layer consists of a vertex for each edge in $\edgesFrom{v}$; for each subset $M\subseteq \edgesFrom{v}$ there is an arc from $M$'s vertex to $e$'s vertex for every $e\in M$.
	These arcs have infinite capacity (and thus never occur in a min-cut).
	Finally, the fourth layer consists only of a single artificial sink vertex $\dest$.
	For every edge $e$ there is an arc from $e$ to $\dest$ with capacity $r_{e,i}(\theta)$.
	
	\begin{figure}[ht]\centering
		\begin{adjustbox}{max width=\linewidth}
			\begin{tikzpicture}[y=0.7cm]
	\begin{scope}[xshift=0cm]
		\node[vertex](v)at(0,0){$v$};
		\node[vertex](v2)at(2,1){};
		\node[vertex](v3)at(2,-1){};
		\draw[edge](v) --node[sloped,above]{$e_1$} (v2);
		\draw[edge](v) --node[sloped,above]{$e_2$} (v3);
		
		\draw[edge,dashed,<-](v)--++(-.75,.4);
		\draw[edge,dashed,<-](v)--++(-.75,-.4);
		
		\draw[edge,dashed](v2)--++(.75,.3);
		\draw[edge,dashed](v2)--++(.75,-.3);
		\draw[edge,dashed](v3)--++(.75,0);
	\end{scope}
	
	\begin{scope}[xshift=6cm]
		\node[vertex](s) at (-2,0) {$s$};
		
		\node[vertex](M1) at (3,3) {$\emptyset$};
		\node[vertex](M2) at (3,1) {$\{e_1\}$};
		\node[vertex,inner sep=0pt](M4) at (3,-1) {$\{e_1,e_2\}$};
		\node[vertex](M3) at (3,-3) {$\{e_2\}$};
		
		\node[vertex](e1) at (6,1.5) {$e_1$};
		\node[vertex](e2) at (6,-1.5) {$e_2$};
		
		\node[vertex](t) at (9,0) {$\dest$};
		
		\draw[edge] (s) --node[above,sloped,pos=.65]{$\pi_{v,\emptyset,i}(\theta,f)$}(M1);
		\draw[edge] (s) --node[above,sloped,pos=.65]{$\pi_{v,\{e_1\},i}(\theta,f)$}(M2);
		\draw[edge] (s) --node[above,sloped,pos=.65]{$\pi_{v,\{e_1,e_2\},i}(\theta,f)$}(M4);
		\draw[edge] (s) --node[above,sloped,pos=.65]{$\pi_{v,\{e_2\},i}(\theta,f)$}(M3);
		
		\draw[edge] (M2) --node[above,sloped]{$\infty$} (e1);
		\draw[edge] (M4) --node[above,sloped]{$\infty$} (e1);
		\draw[edge] (M4) --node[above,sloped]{$\infty$} (e2);
		\draw[edge] (M3) --node[above,sloped]{$\infty$} (e2);
		
		\draw[edge] (e1) --node[above,sloped]{$\ROe_{e_1,i}(\theta)$} (t);
		\draw[edge] (e2) --node[above,sloped]{$\ROe_{e_2,i}(\theta)$} (t);
	\end{scope}
\end{tikzpicture} 		\end{adjustbox}
		\caption{An example for the construction of the auxiliary network (right) used in the proof of \Cref{lem:CharacterizationStochasticIDERouting} for a node~$v$ with two outgoing edges (left).
		The labels on the arcs of the auxiliary network indicate their capacities.}\label{fig:NetworkMaxFlowArgument}
	\end{figure}

	We show that the maximum flow value is $1$; then setting the value $r_{M,e,i}(\theta)$ to the flow on the arc from $M$ to $e$ of any maximum flow would satisfy the requirements:
	As the maximum flow value is $1 = \sum_{M\subseteq \edgesFrom{v}} \pi_{v,M,i}(\theta,f)$ and $\pi_{v,\emptyset,i}=0$, every vertex of a subset $M$ must have incoming (and outgoing) flow of value $\pi_{v,M,i}(\theta,f)$.
	Therefore, we must have $\sum_{e\in M} r_{M,e,i}(\theta) = \pi_{v,M,i}(\theta,f)$ for every $M\neq \emptyset$.
	Furthermore, due to the capacity constraints, we must have $r_{e,i}(\theta)\geq \sum_{M\ni e} r_{M,e,i}(\theta)$ for each $e\in\edgesFrom{v}$.
	As, additionally, $\sum_{e} r_{e,i}(\theta) = 1 = \sum_{M}\sum_{e} r_{M,e,i}(\theta)$, we must have equality in the previous sentence.

	\newcommand*{\val}{\mathrm{cap}}
	We now show that the minimum $s$-$\dest$-cut of this artificial network has value $1$.
	Clearly, $1=\sum_{M}\pi_{v,M,i}(\theta,f)$ is an upper bound(by considering the cut $S=\{s\}$).
	Let $S$ be a minimal cut.
	Note, that $M\in S$ implies $e\in S$ for every $e\in M$ (as the arc from $M$ to $e$ has infinite capacity).
	We consider the set $M\coloneqq \{ e \in\edgesFrom{v} \mid e\in S \}$.
	Note that the previous observation shows that every $M'\subseteq\edgesFrom{v}$ with $M'\not\subseteq M$ is not contained in $S$ (as there is an edge $e$ in $M'$ which is not in $S$).
	By possibly removing all subsets of~$M$ and all edges in~$M$ from~$S$, we obtain the cut $\{s\}$ and the following inequality (where $\val(S)$ denotes the capacity of cut~$S$)
	\begin{equation*}\label{cuts-equal}
		\val(S) \leq \val(S\setminus ( \{ M' \mid M'\subseteq M \} \cup M ))
		\leq \val(S) + \sum_{M'\subseteq M}\pi_{v,M',i}(\theta,f) - \sum_{e\in M} r_{e,i},
	\end{equation*}
	or, equivalently, $\sum_{e\in M} r_{e,i} \leq \sum_{M' \subseteq M} \pi_{v,M',i}(\theta, f)$. As the latter equals $\rho_{v,M,i}(\theta,f)$ and as the flipped inequality is fulfilled, the inequalities in \eqref{cuts-equal} must hold with equality.
	Furthermore, as the cut in the middle of the chain is $\{ s\}$ with value $1$, the value of $S$ must also be $1$.
	
	We use the Measurable Maximum Theorem \cite[Thm. 18.19]{InfiniteDimensionalAnalysis} to show that there exists a measurable selector of the maximum $s$-$\dest$-flows, and thus, measurable functions $(r_{M,e,i})_{M,e,i}$ satisfying the constraints in $\ROp(f)$.
	To apply this theorem, we show that the map $\phi:\IR \rightrightarrows [0,1]^{\abs{A}}$, which maps a time $\theta$ to the polytope of the feasible $s$-$\dest$-flows \wrt the constraints at time $\theta$, is a measurable correspondence.
	Note that, for every $\theta$, the constraints of the polytope are of the form $g_k(x) \leq c_k(\theta)$ where $g_k$ is a continuous function from $[0,1]^{\abs{A}}$ to $\IR$ and $c_k$ is a measurable function on $\IR$.
	For each such constraint, we define the correspondence $\phi_k(\theta)=\set{ x\in[0,1]^{\abs{A}} | g_k(x) \leq c_k(\theta) }$.
	Then, $\phi_k$ is a measurable correspondence, as for every closed set $S\subseteq [0,1]^{\abs{A}}$ the set $\{ \theta \mid \exists x\in S: g_k(x) \leq c_k(\theta) \} = \{ \theta \mid \min_{x\in S} g_k(x) \leq c_k(\theta) \}$ is measurable due to the measurability of $c_k$.
	Then, $\phi$ is a measurable correspondence as it is the intersection of the correspondences $\phi_k$ (\cf \citealt[Lemma 18.4]{InfiniteDimensionalAnalysis}).
	Thus, we obtain $(r_{M,e,i})_{M,e,i}$ witnessing $r\in\ROp(f)$.
\end{proof}
}
\proofPropCharacterizationStochasticIDERouting

\newcommand{\lemmaStochasticOperatorClosedGraph}{
\begin{lemma}\label{lemma:StochasticOperatorClosedGraph}
    Let $\ROp$ be the routing operator defined in~\eqref{eq:stochastic-routing-operator} and let $\horizon\in\IRnn$.
	If the mapping $f\mapsto \hat C_p(\theta, f)$ is sequentially weak-strong continuous \wrt $(\flowSet_\horizon,\revMinor{\normFct_p})$ and $\IR$ for $\Prob_i$-almost all $\hat C$, almost all $\theta < \horizon$ and all $p \in \PathSet$, then $\ROp_\horizon$ \revised{is decomposable (\Cref{def:decomposable})}.
\end{lemma}
}
\lemmaStochasticOperatorClosedGraph
\newcommand{\proofLemStochasticOperatorClosedGraph}[1][Proof]{
\begin{proof}[#1]
	\revised{
	After restricting the constraints in~\eqref{eq:stochastic-routing-operator-alt-description} to $[0, \horizon]$, they are now in the shape of decomposable routing operators as in \eqref{eq:decomposability}.
	For the second type of constraints in \eqref{eq:stochastic-routing-operator-alt-description}, we only have to verify that the right-hand side, namely $\CharF[[0,\horizon]]\rho_{v,M,i}(\emptyarg, f)$ with $\rho$ defined in~\eqref{eq:spe-rho}, is pointwise sequentially weak-strong lower\footnote{Since the inequality is flipped, we need to verify lower- instead of upper-semicontinuity.}-semicontinuous in $f$; since $\rho_{v,M,i}$ is uniformly bounded by $1$, any sequence $(\CharF[[0,\horizon]]\rho_{v,M,i}(f^{(n)}))_n$ is trivially dominated by $\CharF[[0,\horizon]]$.
	}
    
    \begin{claim}\label{claim:RhoLowerSemicont}
    	For almost every $\theta<\horizon$, the mappings $\rho_{v,M,i}(\theta,\emptyarg): \flowSet_\horizon \to [0,1]$ are sequentially \revMinor{weak-strong} lower-semicontinuous, \ie for every sequence~$f^{(n)}$ converging weakly to some~$f$ it holds that $\liminf_{n\to\infty} \rho_{v,M,i}(\theta, f^{(n)}) \geq \rho_{v,M,i}(\theta, f)$.
    \end{claim}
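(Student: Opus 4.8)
The plan is to reduce the claim to Fatou's lemma for the probability measure~$\Prob_i$. First I would rewrite $\rho_{v,M,i}(\theta,f)$ in terms of the edge-wise activeness conditions. Writing $\PathSet_e\coloneqq\set{p\in\PathSet_{v,t_i}\mid p_1=e}$ and
\[
	g^f_e(\hat C,\theta)\coloneqq\min_{p\in\PathSet_e}\hat C_p(\theta,f)-\min_{q\in\PathSet_{v,t_i}}\hat C_q(\theta,f)\geq 0
\]
(with the convention $\min\emptyset=+\infty$), an edge $e\in\edgesFrom{v}$ is perceived active at time~$\theta$ \wrt the predictor~$\hat C$ precisely when $g^f_e(\hat C,\theta)=0$. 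Hence $\tilde E_i(\theta,f,\hat C)\cap\edgesFrom{v}\subseteq M$ if and only if $g^f_e(\hat C,\theta)>0$ for every $e\in\edgesFrom{v}\setminus M$, so that $\rho_{v,M,i}(\theta,f)=\Prob_i(A^\theta_f)$ where $A^\theta_f\coloneqq\set{\hat C\mid g^f_e(\hat C,\theta)>0\text{ for all }e\in\edgesFrom{v}\setminus M}$; these sets are $\Prob_i$-measurable by the same reasoning as in the proof of \Cref{prop:measurability-stochastic-routing-operator}.

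Next I would fix a time $\theta<\horizon$ in the full-measure set on which $f\mapsto\hat C_p(\theta,f)$ is sequentially weak-strong continuous for $\Prob_i$-almost all~$\hat C$ (such a set exists by the assumption of \Cref{lemma:StochasticOperatorClosedGraph}), take a sequence $f^{(n)}$ converging weakly to~$f$ in $\flowSet_\horizon$, and let $N$ be a $\Prob_i$-null set outside of which $\hat C_p(\theta,f^{(n)})\to\hat C_p(\theta,f)$ holds for every path~$p$. Since the minimum over the finite sets $\PathSet_e$ and $\PathSet_{v,t_i}$ is continuous, for every $\hat C\notin N$ we also obtain $g^{f^{(n)}}_e(\hat C,\theta)\to g^f_e(\hat C,\theta)$ for every $e\in\edgesFrom{v}$. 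Consequently, if $\hat C\notin N$ and $\hat C\in A^\theta_f$, then $g^f_e(\hat C,\theta)>0$ for all $e\in\edgesFrom{v}\setminus M$, hence $g^{f^{(n)}}_e(\hat C,\theta)>0$ for all sufficiently large~$n$, i.e.\ $\hat C\in A^\theta_{f^{(n)}}$ for all large~$n$. This shows $A^\theta_f\setminus N\subseteq\liminf_{n\to\infty}A^\theta_{f^{(n)}}$, and therefore $\Prob_i(A^\theta_f)\leq\Prob_i\bigl(\liminf_{n}A^\theta_{f^{(n)}}\bigr)$.

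Finally, Fatou's lemma for sets gives $\Prob_i\bigl(\liminf_{n}A^\theta_{f^{(n)}}\bigr)\leq\liminf_{n}\Prob_i(A^\theta_{f^{(n)}})$, so that
\[
	\rho_{v,M,i}(\theta,f)=\Prob_i(A^\theta_f)\leq\liminf_{n\to\infty}\Prob_i(A^\theta_{f^{(n)}})=\liminf_{n\to\infty}\rho_{v,M,i}(\theta,f^{(n)}),
\]
which is the asserted sequential lower semicontinuity. The only point requiring care is keeping track of the two ``almost everywhere'' quantifiers --- over $\theta\in[0,\horizon)$ and over $\hat C$ --- and making the inclusion $A^\theta_f\subseteq\liminf_n A^\theta_{f^{(n)}}$ precise modulo the $\Prob_i$-null set~$N$; the remaining steps are routine.
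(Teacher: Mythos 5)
Your proof is correct and follows the same strategy as the paper's. The key observation -- that for $\Prob_i$-almost every $\hat C$ the perceived active edge set under $f^{(n)}$ is eventually contained in that under $f$, so that $A^\theta_f$ lies (modulo a $\Prob_i$-null set) in $\liminf_n A^\theta_{f^{(n)}}$ -- is identical to the paper's; you conclude via Fatou's lemma for sets, while the paper partitions the probability space by the first index at which the eventual inclusion holds and then invokes ``superadditivity of the limit inferior'' (which for the resulting infinite sum is precisely Fatou for the counting measure). Your packaging is arguably the cleaner of the two.
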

    
    \begin{proofClaim}
    	Let $f^{(n)} \to f$ be a weakly convergent sequence in $\flowSet_\horizon$.
		By the \namecref{lemma:StochasticOperatorClosedGraph}'s assumption, we have $\hat C_p(\theta,f^{(n)}) \to \hat C_p(\theta,f)$ for $\Prob_i$-almost all $\hat C$ and all $p\in \PathSet$.
		Therefore, for $\Prob_i$-almost all $\hat C$, there exists some $n_{\hat C}$ such that $\tilde E_i(\theta, f^{(m)}, \hat C) \subseteq \tilde E_i(\theta,f, \hat C)$ holds for all $m\geq n_{\hat C}$.
		This means, for every $M\subseteq\edgesFrom{v}$, $n\in\IN$, and $m\geq n$ we have \begin{equation*}
			\Prob_i(\set{
				n_{\hat C} = n
				\,\land\,
				\tilde E_i(\theta, f^{(m)}, \hat C) \cap \edgesFrom{v} \subseteq M
			})
			\geq 
			\Prob_i(\set{
				n_{\hat C} = n
				\,\land\,
				\tilde E_i(\theta,f, \hat C) \cap \edgesFrom{v} \subseteq M
			}).
		\end{equation*}
		By taking the limit inferior on the left side and summing over all $n\in\IN$ we get
		\begin{align*}
			\sum_{n\in\IN} \liminf_{m\to\infty} \Prob_i(\set{
				n_{\hat C} = n
				\,\land\,
				\tilde E_i(\theta,f^{(m)}, \hat C) \cap \edgesFrom{v} \subseteq M
			})
			\geq
			\rho_{v,M,i}(\theta, f).
		\end{align*}
		The superadditivity of the limit inferior yields our claim.{}
    \end{proofClaim}
    \revised{Thus, all constraints are valid for a decomposable routing operator.}
\end{proof}
}
\proofLemStochasticOperatorClosedGraph

With this \iftrue\namecref{lemma:StochasticOperatorClosedGraph}\else lemma\fi{} we can now apply \Cref{thm:existence-finite-time-horizon} or \Cref{thm:ExtensionIfContinuous} to derive existence results for \SPEs.
For an additional uniqueness result, we show that if $\hat C_p(\theta, f)$ and $\hat C_q(\theta, f)$ coincide almost never for any two distinct $p,q\in\PathSet_{v,\dest_i}$,
\iftrue the probabilities $\pi_{v,M,i}(\theta)$ are almost always zero for any $M$ containing more than one edge and, hence, \fi
the resulting routing-operator is prescriptive.
Thus, we can apply \Cref{cor:prescriptive-operators}~\ref{cor:prescriptive-operators:lipschitz} once we show that $\ROe_\horizon$ fulfils a Lipschitz condition (under natural assumptions on $\Prob_i$).

\newcommand{\propStochasticDPEPrescriptive}{
\begin{proposition}\label{prop:stochastic-dpe-prescriptive}
	Assume $\Prob_i(\set{ \hat C_p(\theta, f)=\hat C_q(\theta, f)  })=0$ holds for all flows $f$, $p,q\in\PathSet_{v,\dest_i}$ with $p\neq q$, $i\in I$, $v\in V$ and  almost all $\theta\in\IR$.
	Then the routing operator defined in \eqref{eq:stochastic-routing-operator} is prescriptive \revised{and, for all $e=vw$, has the form}
	\[
		\revised{
		\ROe(f)_{e,i} = \pi_{v,\{e\},i}(\emptyarg, f) = \sum_{\substack{p\in \PathSet_{v,\dest_i}\\p_1 = e}} \Prob_i\big(\set{ \hat C_p(\emptyarg, f) = \min_{q\in\PathSet_{v,\dest_i}} \hat C_q(\emptyarg, f) }\big).
		}
	\]
\end{proposition}
}
\propStochasticDPEPrescriptive
\newcommand{\proofPropStochasticDPEPrescriptive}[1][Proof]{
\begin{proof}[#1]\label{proof:prop:stochastic-dpe-prescriptive}
	Under the given assumption, the set of active outgoing edges $\tilde E_i(\theta,f,\hat C)$ has exactly one element almost surely for all $i\in I$, $v\in V$ and almost all $\theta\in\IR$.
	Therefore, $\pi_{v,M,i}(\theta,f) = 0$ holds whenever $\abs{M}\neq 1$ and we have $r_{e,i}(\theta) = \pi_{v,\{e\},i}(\theta, f)$ for all $r\in \ROp(f)$.
	\revised{Further, $\pi_{v,\{e\}, i}(\theta, f)$ equals 
	\[
		\Prob_i\Big(\bigcup_{\substack{p\in\PathSet_{v,\dest_i}\\ p_1 = e}}\set{ \hat{C} | \hat C_{p}(\theta, f) = \min_{q\in\PathSet_{v,\dest_i}} \hat C_{q}(\theta, f) }\Big)
		=
		\sum_{\substack{p\in\PathSet_{v,\dest_i}\\ p_1 = e}}\Prob_i\big(\set{ \hat C_{p}(\theta, f) = \min_{q\in\PathSet_{v,\dest_i}} \hat C_{q}(\theta, f) }\big),
	\]
	since the set of predictors $\hat C$ for which two different paths attain the minimum at the same time is, by assumption, a $\Prob_i$-null set.}
\end{proof}
}
\proofPropStochasticDPEPrescriptive

\newcommand*{\ErrCost}{\hat C}
\newcommand*{\ErrZ}{\hat Z}

In particular, this means that $\ROp$ is prescriptive if for all flows $f$, paths $p\neq q$ and almost all $\theta$, the random variable
\begin{equation}\label{eq:spe-errz}
	\ErrZ_{p,q}(\theta, f)\coloneqq \hat C_p(\theta, f) - \hat C_q(\theta, f)
\end{equation}
has a probability density function (p.d.f.) \wrt every $\Prob_i$\iftrue, that is, there exists a function $\delta$ such that $\Prob_i(\set{\ErrZ_{p,q}^f(\theta, f)\in S}) = \int_S \delta \diff\leb$ holds for every measurable $S\subseteq \IR$\fi.

\newcommand{\lemmaStochasticDPELipschitz}{
\begin{lemma}\label{lemma:StochasticDPELipschitz}
	Let $\horizon \in\IRp$ and assume that there exists some $B\in\IRp$ such that $\ErrZ_{p,q}(\theta, f)$ has a p.d.f. essentially bounded by $B$ for all $f\in \flowSet_\horizon$, $i\in I$, $v\in V$ and $p,q\in\PathSet_{v,\dest_i}$ with $p\neq q$ and almost all $\theta<\horizon$.
	Furthermore, assume that for $\Prob_i$-almost all $\hat C$ and almost all $\theta\in[0,\horizon]$, the map $f\mapsto \CharF_{[0,\horizon]}\cdot \hat C(\theta, f)$ is Lipschitz continuous with common constant $L$ \wrt $(\flowSet_\horizon, \normFct_1)$ and~$\IR$. 

	Then, $\ROe_{\horizon}$ is Lipschitz continuous \wrt \revMinor{$\normFct_1$} and \revMinor{$\normFct_\revised{1}$}.
\end{lemma}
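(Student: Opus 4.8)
The plan is to reduce the statement to a pointwise‑in‑time Lipschitz estimate for $\ROe$. Since a real random variable possessing a density is atomless, the bounded‑density hypothesis gives $\Prob_i(\set{\hat C_p(\theta,f)=\hat C_q(\theta,f)})=0$ for all $p\neq q$, so \Cref{prop:stochastic-dpe-prescriptive} applies and $\ROe(f)_{e,i}(\theta)=\pi_{v,\{e\},i}(\theta,f)$ for every edge $e=vw$ with $v\neq t_i$ (and $\ROe(f)_{e,i}\equiv 0$ for $v=t_i$). Abbreviating $\hat C^e_i(\theta,f)\coloneqq\min_{p\in\PathSet_e}\hat C_p(\theta,f)$, where $\PathSet_e\subseteq\PathSet_{v,t_i}$ denotes the simple $v$-$t_i$-paths starting with $e$, atomlessness further lets me write $\ROe(f)_{e,i}(\theta)=\Prob_i(A^f_e)$ with $A^f_e\coloneqq\set{\hat C^e_i(\theta,f)\leq\hat C^{e'}_i(\theta,f)\text{ for all }e'\in\edgesFrom{v}}$ (the tie event being $\Prob_i$-null). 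Everything then follows from a bound $\abs{\ROe(f)_{e,i}(\theta)-\ROe(g)_{e,i}(\theta)}\leq C\,\norm{f-g}_1$ with $C$ independent of $\theta$, $f$ and $g$: because $\norm{f-g}_1$ does not depend on $\theta$, raising it to the $p$-th power and integrating over $[0,\horizon]$ yields $\norm{\ROe_\horizon(f)_{e,i}-\ROe_\horizon(g)_{e,i}}_p\leq\horizon^{1/p}\,C\,\norm{f-g}_1$, and taking the maximum over $(e,i)$ (the product norm on the codomain) gives the asserted Lipschitz bound with constant $\horizon^{1/p}C$.

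For the pointwise estimate, I would fix an almost-every $\theta<\horizon$ at which $f\mapsto\hat C(\theta,f)$ is $L$-Lipschitz for $\Prob_i$-almost all $\hat C$ and at which each $\ErrZ_{p,q}(\theta,\emptyarg)$ has a density essentially bounded by $B$, and set $\Delta\coloneqq\norm{f-g}_1$. Since a pointwise minimum of $L$-Lipschitz functions is $L$-Lipschitz, $\abs{\hat C^e_i(\theta,f)-\hat C^e_i(\theta,g)}\leq L\Delta$ holds $\Prob_i$-almost surely for every outgoing edge $e$ of $v$. The crucial step is the inclusion, valid outside a $\Prob_i$-null set,
\[
	A^f_e\symDiff A^g_e\ \subseteq\ \bigcup_{e'\in\edgesFrom{v}\setminus\{e\}}\ \bigcup_{p\in\PathSet_e,\ q\in\PathSet_{e'}}\set{-4L\Delta\leq\ErrZ_{p,q}(\theta,f)\leq 4L\Delta}.
\]
Indeed, if $\hat C\in A^f_e\setminus A^g_e$ (the reverse inclusion is symmetric), then for some offending $e'\in\edgesFrom{v}\setminus\{e\}$ one has $\hat C^e_i(\theta,f)-\hat C^{e'}_i(\theta,f)\leq 0<\hat C^e_i(\theta,g)-\hat C^{e'}_i(\theta,g)$; since these two numbers differ by at most $2L\Delta$, the $f$-difference $\hat C^e_i(\theta,f)-\hat C^{e'}_i(\theta,f)$ lies in $(-2L\Delta,0]$, and evaluating the two minima defining $\hat C^e_i$ and $\hat C^{e'}_i$ at their minimizers $p^\ast\in\PathSet_e$, $q^\ast\in\PathSet_{e'}$ rewrites it as $\ErrZ_{p^\ast,q^\ast}(\theta,f)\in(-2L\Delta,0]\subseteq[-4L\Delta,4L\Delta]$, so $\hat C$ lies in the displayed union.

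It then remains to combine $\abs{\Prob_i(A^f_e)-\Prob_i(A^g_e)}\leq\Prob_i(A^f_e\symDiff A^g_e)$ with subadditivity, the density estimate $\Prob_i(\set{-4L\Delta\leq\ErrZ_{p,q}(\theta,f)\leq 4L\Delta})=\int_{[-4L\Delta,\,4L\Delta]}\delta_{p,q}\diff\leb\leq 8BL\Delta$ (for the $B$-bounded density $\delta_{p,q}$), and the counting bound $\sum_{e'\neq e}\abs{\PathSet_e}\cdot\abs{\PathSet_{e'}}\leq\abs{\PathSet_{v,t_i}}^2$, which altogether gives $\abs{\ROe(f)_{e,i}(\theta)-\ROe(g)_{e,i}(\theta)}\leq 8BL\,\abs{\PathSet_{v,t_i}}^2\,\norm{f-g}_1$; hence $C\coloneqq 8BL\cdot\max_{v,i}\abs{\PathSet_{v,t_i}}^2$ works, and the $L^p([0,\horizon])$-integration from the first paragraph finishes the proof. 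I expect the middle step to be the main obstacle: one must argue carefully that the (random) event ``the outgoing edge of $v$ perceived as active flips between the flows $f$ and $g$'' is contained in a finite union of events of the form ``some pairwise predicted-cost difference lies within $4L\Delta$ of zero'', because only then does the bounded-density hypothesis translate into a bound that is linear in $\norm{f-g}_1$; everything surrounding this inclusion is routine measure-theoretic bookkeeping.
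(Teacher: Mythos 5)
Your proof is correct and takes essentially the same approach as the paper's: both reduce to a pointwise-in-time bound $\abs{\ROe(f)_{e,i}(\theta)-\ROe(g)_{e,i}(\theta)}\leq C\norm{f-g}_1$ via symmetric-difference bookkeeping, using the Lipschitz hypothesis on $\hat C$ together with the bounded density to control the probability that a cost comparison flips, and then integrate in time. The only organizational difference is that you bundle paths by their first edge from the outset by passing to the minima $\hat C^e_i$, whereas the paper works with a two-stage decomposition (first fixing a pair of paths, then a fixed path against all alternatives, finally the union over paths with a given first edge); the resulting constants match up to an unimportant factor, and your $T^{1/p}$ in the final integration step is in fact the sharper form.
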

}
\lemmaStochasticDPELipschitz
\newcommand{\proofLemStochasticDPELipschitz}[1][Proof]{
\begin{proof}[#1]
	We show Lipschitz-continuity in every coordinate of $\ROe_{\horizon}$.
	Thus, let $e=vw\in E$ and $i\in I$ be given.
	For $v=\dest_i$, we have $\ROe(f)_{e,i}\equiv 0$ for all $f\in\flowSet_\horizon$.
	Hence, going forward, we assume $v\neq \dest_i$.
	\begin{claim}\label{claim:path-differences-lipschitz}
		There exists $L' > 0$ such that for all $f, g\in \flowSet_\horizon$, $p,q\in\PathSet_{v, \dest_i}$ and almost all $\theta\in[0,\horizon]$ it holds \optDisplay{
			\Prob_i\left(
				\set{ \ErrCost_{p}(\theta, f) \leq \ErrCost_{q}(\theta, f)}
				\symDiff
				\set{ \ErrCost_{p}(\theta, g) \leq \ErrCost_{q}(\theta, g)}
			\right)
			\leq L'\cdot \norm{f - g}_1,
		}
		\revised{where $\symDiff$ denotes the symmetric difference, defined as $A\symDiff B\coloneqq (A\setminus B) \cup (B\setminus A)$.}
	\end{claim}
	\begin{proofClaim}
		The statement is trivial for $p=q$.
		By expanding the definition of the symmetric difference, the left-hand side equals
		\begin{align}\label{spe:claim-lipschitz-split}
			\Prob_i\left(
				\begin{aligned}
				\set{ \ErrZ_{p,q}(\theta, f) \leq 0 < \ErrZ_{p,q}(\theta, g) }
				\\
				\cup
				\set{ \ErrZ_{p,q}(\theta, f) > 0 \geq \ErrZ_{p,q}(\theta, g) }
				\end{aligned}
			\right).
		\end{align}
		We bound the probability of each of these sets separately (but analogously).
		Thus, assume \wlg that $\ErrZ_{p,q}(\theta, f) \leq 0 < \ErrZ_{p,q}(\theta, g)$ holds.
		Clearly, $h\mapsto \ErrZ_{p,q}(\theta, h)$ is Lipschitz continuous with constant $2\cdot L$.
		Therefore, $\ErrZ_{p,q}(\theta, g) - \ErrZ_{p,q}(\theta, f) \leq 2\cdot L \cdot \norm{f - g}_1$ and thus $0 < \ErrZ_{p,q}(\theta, g) \leq 2\cdot L\cdot \norm{f - g}_1$.
		In particular, $\{ \ErrZ_{p,q}(\theta, f) \leq 0 < \ErrZ_{p,q}(\theta, g)\}\subseteq \{ \ErrZ_{p,q}(\theta, g) \in [0,2L\norm{f-g}_1 \}$ and thus, the probability of the former set is bounded by $2\cdot B\cdot L\cdot \norm{f-g}_1$.
		Doing this for both sets in \eqref{spe:claim-lipschitz-split}, the considered probability of the claim is bound by $L'\cdot \norm{f-g}_1$ with $L'\coloneqq 4\cdot B\cdot L$.
	\end{proofClaim}

	\begin{claim}\label{claim:path-minimum-lipschitz}
		There exists $L'' > 0$ such that for $f,g\in \flowSet_\horizon$, $p\in\PathSet_{v,\dest_i}$ and almost all~$\theta$ it holds that 
		\begin{equation*}
			\Prob_i\left(
				\begin{aligned}
					\set{ \forall q\in \PathSet_{v,\dest_i}: \hat C_{p}(\theta, f) \leq \ErrCost_{q}(\theta, f)}
					\symDiff
					\set{ \forall q\in \PathSet_{v,\dest_i}: \hat C_{p}(\theta, g) \leq \hat C_{q}(\theta, g)}\end{aligned}\right)
			\leq L''\cdot \norm{f-g}_1.
		\end{equation*}
	\end{claim}
	\begin{proofClaim}
		Let $L'$ denote the constant given by \Cref{claim:path-differences-lipschitz}.
		Because the symmetric difference of intersections $(\bigcap_{j\in J} A_j) \symDiff (\bigcap_{j\in J} B_j)$ is a subset of $\bigcup_{j\in J} (A_j\symDiff B_j)$ (for arbitrary $J$, $A_j$, $B_j$), the described probability is bounded from above by
		\begin{equation*}
			\sum_{q\in \PathSet_{v,\dest_i}} \Prob_i\left(
					\set{ \hat C_{p}(\theta, f) \leq \hat C_{q}(\theta, f)}
					\symDiff \set{ \hat C_{p}(\theta, g) \leq \hat C_{q}(\theta, g)}
			\right)
			\leq \abs{\PathSet_{v,\dest_i}} \cdot L'\cdot \norm{f-g}_1. \qedhere
		\end{equation*}
	\end{proofClaim}
	
	By (the proof of) \Cref{prop:stochastic-dpe-prescriptive}, we know $\pi_{v,M,i}(\theta,f)=0$ for $\abs{M}\neq 1$ and therefore $r_{e,i}(f)(\theta)$ equals $\pi_{v,\{e\}, i}(\theta, f)$, \ie the probability that $\{e\}$ is the set of perceived active outgoing edges of $v$.
	This is exactly the probability that there exists a path in $\PathSet_{v,\dest_i}$ starting with $e$ that minimizes the perceived cost $\ErrCost_{p,i}(\theta, f)$ over all alternatives in $\PathSet_{v,\dest_i}$:
	\begin{align*}
		r_{e,i}(f)(\theta) = \Prob_i\bigl(
			\bigcup_{\substack{p\in\PathSet_{v,\dest_i}\\ p_1=e}} \set{ \forall q\in\PathSet_{v,\dest_i}: \hat C_{p}(\theta, f) \leq \hat C_{q}(\theta, f)}
		\bigr).
	\end{align*}
	Now, let $f, g\in M$ be arbitrary.
	Using the fact that $\abs{\Prob(A) - \Prob(B)} \leq \Prob(A\symDiff B)$ holds for arbitrary $A$ and $B$, \Cref{claim:path-minimum-lipschitz} as well as the fact that $(\bigcup_{j\in J} A_j) \symDiff (\bigcup_{j\in J} B_j) \subseteq \bigcup_{j\in J} (A_j \symDiff B_j)$ holds for arbitrary $J$, $A_j$, $B_j$, we deduce (for almost all $\theta\in[0,\horizon]$) that
	\begin{equation*}
		\abs{r_{e,i}(f)(\theta) - r_{e,i}(g)(\theta)}
		\leq \sum_{\substack{p\in\PathSet_{v,\dest_i}\\ p_1=e}} L'' \cdot \norm{f-g}_1
		\leq \abs{\PathSet_{v,\dest_i}}\cdot L'' \cdot \norm{f-g}_1
		\eqqcolon L'''\cdot \norm{f-g}_1.
	\end{equation*}
	Clearly, this implies \optDisplay{
		\norm{\CharF_{[0,\horizon]}\cdot r_{e,i}(f) - \CharF_{[0,\horizon]}\cdot r_{e,i}(g)}_\revised{1}
		\leq \horizon\cdot L'''\cdot \norm{f-g}_1. \qedhere
	}
\end{proof}
}
\proofLemStochasticDPELipschitz

\revised{
Let us now apply the theory from~\Cref{sec:ExistenceUniqueness} and summarize our findings for \SPEs{} in the following theorem:
}

	\begin{theorem}\label{cor:stochastic-prediction-equilibrium}
		Let $\EdgeLoading$ be a locally bounded edge loading operator and $\Prob_i$ be probability measures on cost predictors.
		\begin{enumerate}[label=(\roman*)]
			\item\label{cor:stochastic-prediction-equilibrium:finite-existence}
			If, for some $\horizon\in\IRnn$, $\EdgeLoading_\horizon$ is sequentially weak-weak continuous \revMinor{\wrt $\normFct_p$} and $f\mapsto \hat C_p(\theta, f)$ is sequentially weak-strong continuous \wrt $(\flowSet_\horizon,\revised{\normFct_p})$ and $\IR$ for $\Prob_i$-almost all $\hat C$ for almost all $\theta<\horizon$, then there exists a \SPE{} until time $\horizon$.
			\item\label{cor:stochastic-prediction-equilibrium:infinite-existence}
			If \revMinor{the assumptions of~\ref{cor:stochastic-prediction-equilibrium:finite-existence} hold for all $\horizon\in\IRnn$} and if $\EdgeLoading$ as well as $\Prob_i$-almost all $\hat C$ are causal, then there exists a \SPE{} until time $\infty$.
			\item\label{cor:stochastic-prediction-equilibrium:unique-existence}
			If $\EdgeLoading$ is uniformly strictly causal and essentially bounded, \revMinor{the network inflow rates} $u$ \revMinor{are contained in} $\pLocInt[\infty][\IR]^{V\times I}$, $\hat C$ is causal for $\Prob_i$-almost all $\hat C$, there exists some $B\in\IRp$ such that for every two distinct paths $p, q$, flow $f$, almost all $\theta$, the random variable $\ErrZ_{p,q}(\theta, f)$ \revised{in~\eqref{eq:spe-errz}} has a p.d.f. bounded by $B$ (\wrt every $\Prob_i$),
			and if for all $\horizon\in\IRnn$ there exists some $L\in\IRp$ such that for $\Prob_i$-almost all $\hat C$, the mapping $f\mapsto \CharF_{[0,\horizon]}\cdot \hat C(\theta, f)$ is Lipschitz continuous with constant $L$ \wrt $(\flowSet_\horizon, \revMinor{\normFct_1})$ and $\IR$ for almost all $\theta\in[0,\horizon]$, then there exists a unique \SPE{} until~$\infty$.
		\end{enumerate}
	\end{theorem}
\newcommand{\proofCorStochasticPredictionEquilibrium}[1][Proof]{
\begin{proof}[#1]
	\revised{First, note that the corresponding routing operator $\ROp$ in~\eqref{eq:stochastic-routing-operator} has non-empty values:
	For this, we can consider the witness $r$, defined by $r_{vw,i} = 0$, if $v=\dest_i$ and \[
		r_{vw,i}\coloneqq \sum_{M\subseteq \edgesFrom{v}: e\in M} \frac{ \pi_{v,M,i}(\cdot, f) }{ {\abs{M}} },
	\]
	if $v\neq \dest_i$, which is well-defined since $\dest_i$ is assumed to be reachable from $v$, and the sum $\sum_{e\in\edgesFrom{v}} r_{e,i}$ is $1$ at all times, since for all times $\theta$ and $\Prob_i$-almost all predictors $\hat{C}$, the perceived set of active edges $M$ is non-empty,subset $M$ of $\edgesFrom{v}$ that is the perceived set of active edges, and thus $\sum_{M\subseteq\edgesFrom{v}: M\neq \emptyset} \pi_{v,M,i}(\theta, f) = 1$.}

	For \ref{cor:stochastic-prediction-equilibrium:finite-existence}{} and~\ref{cor:stochastic-prediction-equilibrium:infinite-existence} {} we \revMinor{can now} apply \Cref{thm:existence-finite-time-horizon} and \Cref{thm:ExtensionIfContinuous}, respectively.
	In both cases, \Cref{lemma:StochasticOperatorClosedGraph} shows that $\ROp_\horizon$ \revised{is decomposable}.
	For \ref{cor:stochastic-prediction-equilibrium:unique-existence} note that the routing operator is prescriptive (\Cref{prop:stochastic-dpe-prescriptive}) and thus~\ref{cor:stochastic-prediction-equilibrium:unique-existence} follows from \Cref{cor:prescriptive-operators}~\ref{cor:prescriptive-operators:lipschitz} where \Cref{lemma:StochasticDPELipschitz} proves the required Lipschitz property.
\end{proof}
}
\proofCorStochasticPredictionEquilibrium

	We conclude by applying the gained insights to the stochastic IDE model.

	\begin{corollary}\label{cor:stochastic-ide}
		Let $\EdgeLoading$ be a locally bounded edge loading operator and let $\varepsilon_{e,i}$ denote the random variables of the stochastic IDE model.
		\begin{enumerate}[label=(\roman*)]
			\item\label{cor:stochastic-ide:finite-existence} If, for some $\horizon\in\IRnn$, $\EdgeLoading_\horizon$ is sequentially weak-weak continuous \revMinor{\wrt $\normFct_p$} and $f\mapsto C_e(\theta, f)$ is sequentially weak-strong continuous \wrt $(\flowSet_\horizon, \revMinor{\normFct_p})$ and $\IR$ for almost all $\theta<\horizon$, then there exists a stochastic IDE until time $\horizon$.
			\item\label{cor:stochastic-ide:infinite-existence} If \revMinor{the assumptions of~\ref{cor:stochastic-ide:finite-existence} hold for all $\horizon\in\IRnn$} and if $\EdgeLoading$ and $C_e$ are causal, then there exists a stochastic IDE until time $\infty$.
			\item\label{cor:stochastic-ide:unique-existence} If $\EdgeLoading$ is uniformly strictly causal and essentially bounded, $u\in\pLocInt[\infty][\IR]^{V\times I}$, $C_e$ is causal, and all $\varepsilon_{e,i}$ are independent with bounded probability densities, and the mapping $f \mapsto \CharF_{[0, \horizon]}\cdot C_e(\theta, f)$ is Lipschitz continuous with constant $L$ \revMinor{\wrt $\normFct_1$ and $\IR$} for almost all $\theta\in[0,\horizon]$, then there exists a unique stochastic IDE until $\infty$.
		\end{enumerate}
	\end{corollary}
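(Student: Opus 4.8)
To prove \Cref{cor:stochastic-ide} the plan is to recognise the stochastic IDE model as a special case of the general \SPE{} framework and then to apply \Cref{cor:stochastic-prediction-equilibrium}. Recall that in this model a particle of commodity~$i$ at a node~$v\neq t_i$ uses the random path-cost predictor
\[
	\hat C_{p,i}(\theta, f, \boldeps) = \sum_{e\in p}\hat\ttime_e(\theta, f, \varepsilon_{e,i}) = \sum_{e\in p} C_e(\theta, f) + \sum_{e\in p}\varepsilon_{e,i},
\]
where $C_e = \ttime_e$ denotes the (deterministic) edge travel time function and the additive noise $\sum_{e\in p}\varepsilon_{e,i}$ does \emph{not} depend on the flow~$f$. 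This last fact is really all that is needed: each regularity property in~$f$ that \Cref{cor:stochastic-prediction-equilibrium} requires of the predictors (sequential weak-strong continuity, causality, Lipschitz continuity) is inherited from the finitely many functions $C_e$ through the finite sums defining the $\hat C_{p,i}$, with constants depending only on those of the $C_e$ and on $\abs{E}$, and in particular \emph{uniformly} over the noise realisation~$\boldeps$.

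For parts~\ref{cor:stochastic-ide:finite-existence} and~\ref{cor:stochastic-ide:infinite-existence} I would argue that, for each fixed $\boldeps$, the map $f\mapsto\hat C_{p,i}(\theta, f,\boldeps)$ differs from $f\mapsto\sum_{e\in p}C_e(\theta, f)$ only by a constant, so sequential weak-strong continuity (for a.a.~$\theta$) of $f\mapsto C_e(\theta, f)$ carries over to $f\mapsto\hat C_{p,i}(\theta, f,\boldeps)$, hence holds for $\Prob_i$-almost all~$\hat C$; and causality of the $C_e$ likewise yields causality of~$\hat C$ for $\Prob_i$-almost all~$\hat C$. Thus the hypotheses of \Cref{cor:stochastic-prediction-equilibrium}~\ref{cor:stochastic-prediction-equilibrium:finite-existence} (for the given~$\horizon$) resp.\ \ref{cor:stochastic-prediction-equilibrium:infinite-existence} (under the added causality) are met, and the two existence statements follow.

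For part~\ref{cor:stochastic-ide:unique-existence} the structural hypotheses of \Cref{cor:stochastic-prediction-equilibrium}~\ref{cor:stochastic-prediction-equilibrium:unique-existence} ($\EdgeLoading$ uniformly strictly causal and essentially bounded, $u\in\pLocInt[\infty][\IR]^{V\times I}$, causality of~$\hat C$) are available from the corollary's assumptions (causality of $\hat C$ again from that of the $C_e$). The Lipschitz hypothesis transfers as before: $\hat C_{p,i}(\theta, f,\boldeps) - \hat C_{p,i}(\theta, g,\boldeps) = \sum_{e\in p}\bigl(C_e(\theta, f) - C_e(\theta, g)\bigr)$ gives the common Lipschitz constant $\abs{E}\cdot L$. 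For the density bound, I would write, for distinct simple $v$-$t_i$-paths $p\neq q$,
\[
	\ErrZ_{p,q}(\theta, f) = \Bigl(\sum_{e\in p}C_e(\theta, f) - \sum_{e\in q}C_e(\theta, f)\Bigr) + \sum_{e\in p\setminus q}\varepsilon_{e,i} - \sum_{e\in q\setminus p}\varepsilon_{e,i},
\]
where the bracket is deterministic once $\theta$ and~$f$ are fixed; since $p\neq q$ are simple paths their edge sets differ, so $p\setminus q$ and $q\setminus p$ are not both empty and the random part contains at least one~$\varepsilon_{e,i}$. Convolving the bounded density of one such~$\varepsilon_{e,i}$ with the laws of the (independent) remaining summands and with the deterministic shift still produces a density bounded by $B\coloneqq\max_{e\in E,\, i\in I}B_{e,i}$, where $B_{e,i}$ bounds the density of~$\varepsilon_{e,i}$. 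With all hypotheses verified, \Cref{cor:stochastic-prediction-equilibrium}~\ref{cor:stochastic-prediction-equilibrium:unique-existence} yields a unique stochastic IDE until time~$\infty$.

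The only step I expect to require genuine care rather than bookkeeping is the density argument in part~\ref{cor:stochastic-ide:unique-existence}: one must observe that at least one noise variable survives in the symmetric difference $p\symDiff q$ (which uses that distinct simple paths have distinct edge sets) and recall the elementary fact that convolving an $L^\infty$-bounded density with any independent random variable, and translating by a constant, preserves the bound. All the rest is a direct transfer of the assumed properties of the edge travel time functions through the finitely many sums defining the $\hat C_{p,i}$.
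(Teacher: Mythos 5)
Your proof is correct and follows essentially the same approach as the paper: parts~\ref{cor:stochastic-ide:finite-existence} and~\ref{cor:stochastic-ide:infinite-existence} reduce directly to \Cref{cor:stochastic-prediction-equilibrium}~\ref{cor:stochastic-prediction-equilibrium:finite-existence} and~\ref{cor:stochastic-prediction-equilibrium:infinite-existence}, and for part~\ref{cor:stochastic-ide:unique-existence} the only work is verifying that $\ErrZ_{p,q}(\theta,f)$ has a uniformly bounded density, which the paper also disposes of by noting that it is a constant plus a nontrivial linear combination of independent variables with bounded densities. Your proposal merely spells out this last step (the cancellation over $p\cap q$, the nonemptiness of the symmetric difference for distinct simple paths, and the convolution/translation bound), which is a welcome elaboration but not a different argument.
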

\newcommand{\proofCorStochasticIDE}[1][Proof]{
\begin{proof}[#1]
	Statements~\ref{cor:stochastic-ide:finite-existence}{} and~\ref{cor:stochastic-ide:infinite-existence} follow{} directly from \Cref{cor:stochastic-prediction-equilibrium}~\ref{cor:stochastic-prediction-equilibrium:finite-existence}{} and \Cref{cor:stochastic-prediction-equilibrium}~\ref{cor:stochastic-prediction-equilibrium:infinite-existence}, respectively.
	For statement~\ref{cor:stochastic-ide:unique-existence}, we need to show that for distinct paths $p,q$ the random variable $\ErrZ_{p,q}(\theta, f)$ has a p.d.f. bounded by some fixed $B$ (\wrt every $\Prob_i$) in order to apply \Cref{cor:stochastic-prediction-equilibrium}~\ref{cor:stochastic-prediction-equilibrium:unique-existence}.
	However, this is true since $\ErrZ_{p,q}(\theta, f)$ is the sum of a constant and a non-trivial linear combination of independent random variables with bounded probability densities.
\end{proof}
}
\proofCorStochasticIDE

\begin{remark}\label{remark:cumulative-flow}
	Since the prediction functions (as well as the routing operators) are often defined in terms of the cumulative flow functions (\ie $F^{\pm}_{e,i}: \theta \mapsto \int_0^\theta f^{\pm}_{e,i}\diff\leb$) instead of the flow rates, it can be helpful to observe that the continuity assumption on the mapping $f \mapsto \CharF_{[0, \horizon]}\cdot \ttime_e(\theta, f)$ in \Cref{cor:stochastic-ide}~\ref{cor:stochastic-ide:unique-existence} can be replaced by the following (slightly stronger) assumption:
	The map $F \mapsto \ttime_e(\theta, \dot F)$ is Lipschitz continuous from the space of absolutely continuous functions (with the uniform norm) to $\IR$ for almost all $\theta \in [0,\horizon]$.
	This is, because for any two functions $f,g \in L^1([a,b])$ we have 
		\begin{align*}
			\norm{\int f\diff\leb - \int g \diff\leb}_\infty
			&= \sup_{\theta \in [a,b]}\abs{\int_a^\theta f\diff\leb - \int_a^\theta g\diff\leb}
			\leq \sup_{\theta \in [a,b]}\int_a^\theta \abs{f - g}\diff\leb
			\\
			&\leq \int_a^b \abs{f - g}\diff\leb
			= \norm{f-g}_1.
		\end{align*}
	Similar adjustments can be made in \Cref{cor:stochastic-prediction-equilibrium}~\ref{cor:stochastic-prediction-equilibrium:unique-existence}, \revMinor{in} \Cref{cor:prescriptive-operators}~\ref{cor:prescriptive-operators:lipschitz} and \revMinor{in} \Cref{thm:unique-existence-for-lipschitz-operators}~\ref{cond:RoutingOperator-Lipschitz}.
\end{remark}

\section{Conclusion}

In this paper we introduced a general framework for the analysis of dynamic traffic assignment with adaptive route choice that incorporates both the well-known descriptive behavioural models stemming from game-theoretic perspectives, such as the (full-information) Nash equilibrium and the dynamic prediction equilibrium models, as well as prescriptive models including those that assume a logit-based route choice.
We showed that, under certain continuity assumptions on the edge loading and routing operators, a coherent flow exists up to any finite time horizon and, under the additional assumption of causality, it exists on the whole $\IR$.
Moreover, we can guarantee uniqueness of the coherent flow if the operators are uniformly strictly causal or fulfil a Lipschitz-condition.

Since our framework uses abstract edge loading and routing operators, that can be instantiated with various models from the literature, our results generalize established findings including the existence of dynamic Nash flows, dynamic prediction equilibria, and the existence and uniqueness of coherent flows with prescriptive routing operators.
The logit-based operators of the latter model class are motivated by stochastic noise in the predictions of the perceived travel times that may vary from agent to agent.
We modeled these stochastic effects descriptively as what we call the stochastic prediction equilibrium and showed that such an equilibrium exists under mild assumptions on the continuity of the prediction functions.
If we assume that the distribution of the noise has a bounded probability density function, then stochastic prediction equilibria are even unique.

\paragraph{Open Questions}

Concerning the special case of stochastic IDE{} it is not hard to see that if the distribution of the measurement errors is supported on a small interval around zero, then the resulting stochastic IDE is also an $\varepsilon$-approximate IDE in the sense of \cite{PredictionEquilibria}.
Hence, a natural question is whether a sequence of such stochastic IDE converges to an exact IDE if the support of the measurement error distribution shrinks to zero.
If this is the case and if stochastic IDE are unique, then this limit point would, in some sense, be a canonical choice within the set of IDE for a given instance. Furthermore, our existence result for stochastic IDE even provides a natural way of numerically computing such equilibria, as the Banach Fixed Point theorem used for the extension step also guarantees that the standard fixed point iteration converges to a fixed point.

Finally, even though our framework is already quite general, there are still certain, more complex phenomena in traffic assignment for which it is not clear upfront whether they can be incorporated in our model. One such aspect would be spillback effects, where congestion on one road section leads to congestion on a previous road section (see, e.g., \cite{SpillbackSeringKoch}). Other potential extensions would be allowing departure time choice or elastic demand scenarios. 

\paragraph{Acknowledgements} We thank Julian Schwarz and Alexander Keimer for helpful discussions on the topics of this paper. We further thank Julian Schwarz for proofreading. Finally, we thank the anonymous reviewers for their feedback, and one reviewer in particular for spotting an error in the initially submitted version.

\paragraph{Funding} This work was partially funded by the German Federal Ministry of Research, Technology and Space under Grant No.~05M22WPA.

\clearpage

	\printbibliography[heading=bibintoc]

\clearpage

\appendix

\ifnoappendix\else

\section{Some Technical Results from Topology}

\begin{revisedEnv}
The following results regarding topological properties of $L^p$-spaces are well-known. For the convenience of the reader, we still restate them here in the form and using the notation as needed in this paper. For completeness we also provide short proofs.

	\begin{proposition}\label{prop:p-integrable-bounded-codomain}
		Let $E\subset \IR$ be a set of finite measure, and let $p\in (1,\infty)$.
		Then, on the set~$S$ of measurable functions with domain $E$ and codomain $[0,1]$, the topologies induced by $\normFct_p$ and $\normFct_1$ coincide.

		Further, $\norm{f - g}_1 \leq \norm{f - g}_p\cdot \leb(E)^{1/q}$ and $\norm{f-g}_p \leq \norm{f - g}_1^{1/p}$ hold for all $f,g\in S$, where $q$ is the conjugate of $p$ satisfying $\nicefrac{1}{p} + \nicefrac{1}{q} = 1$.
	\end{proposition}
	\begin{proof}
		Since $E$ is of finite measure, $S$ is a subset of both $L^1(E)$ and $L^p(E)$, and thus, both $(S, \normFct_1)$ and $(S, \smallnorm{\emptyarg_p})$ are metric spaces.
		Therefore (\cf \cite[Theorem~2.40]{Aliprantis2006a}), their topologies can be described by sequences.

		Assume a sequence $(f^{(n)})_n$ converges in $(S,\normFct_p)$ to $f$.
		Then, by Hölder's inequality, we have $\smallnorm{f^{(n)} - f}_1 \leq \smallnorm{f^{(n)} - f}_p \cdot \smallnorm{\CharF[E]}_q = \smallnorm{f^{(n)} - f}_p \cdot \leb(E)^{1/q}$.
		Thus, $(f^{(n)})_n$ also converges in $(S,\normFct_1)$ to $f$.

		Now, assume $(f^{(n)})_n$ converges in $(S, \normFct_1)$ to $f$.
		Then, since $\abs{f^{(n)} - f}$ is bounded almost everywhere by $1$, we have 
		$\abs{f^{(n)} - f}^p \leq \abs{f^{(n)} - f}$.
		This implies $\smallnorm{f^{(n)} - f}_p \leq \smallnorm{f^{(n)} - f}_1^{1/p}$ which converges to $0$.
	\end{proof}

	\begin{proposition}\label{prop:product-of-weak-and-strong}
		Let $p,q\in(1,\infty)$ with $\nicefrac{1}{p}+\nicefrac{1}{q}=1$.
		Let $(f^{(n)})_n \subseteq \pInt[q]$ and $(g^{(n)})_n \subseteq \pInt[p]$ be two sequences and $f \in \pInt[q]$ and $g \in \pInt[p]$ two elements.
		If $(f^{(n)})_n$ converges strongly to $f$ (in $\pInt[q]$) and $(g^{(n)})_n$ converges weakly to $g$ (in $\pInt[p]$), then $(f^{(n)} \cdot g^{(n)})_n$ converges weakly to $f\cdot g$ in $\pInt[1]$.
	\end{proposition}
	\begin{proof}
		Let $h \in \pInt[\infty]$ be arbitrary. We need to show that then
			\[\int f^{(n)} \cdot g^{(n)} \cdot h \diff\lambda \to \int f\cdot g \cdot h \diff\lambda.\]
		We have
			\begin{align*}
				&\abs{\int f^{(n)} \cdot g^{(n)} \cdot h \diff\lambda - \int f\cdot g \cdot h \diff\lambda} \\
				&\quad \leq \abs{\int f^{(n)} \cdot g^{(n)} \cdot h - f\cdot g^{(n)}\cdot h\diff\lambda} + \abs{\int f\cdot g^{(n)}\cdot h\diff\lambda - \int f\cdot g \cdot h \diff\lambda}.
			\end{align*}
		Here, then, the latter term goes to $0$ since $g^{(n)}$ converges weakly to $g$ and $f\cdot h \in L^q$. The former term we can further bound using Hölder's inequality:
			\begin{align*}
				\abs{\int f^{(n)} \cdot g^{(n)} \cdot h - f\cdot g^{(n)}\cdot h\diff\lambda} 
					\leq \int \abs{f^{(n)} - f}\cdot\abs{g^{(n)}}\cdot\abs{h}\diff\lambda 
					\leq \norm{f^{(n)}-f}_q \cdot \norm{g^{(n)}}_p \cdot \norm{h}_\infty.
			\end{align*}
		Here, then, the first factor goes to zero since $f^{(n)}$ converges strongly to $f$, the second is bounded as $g^{(n)}$ is a weakly converging sequence (see \cite[Chapter~8, Theorem~7]{Royden2010}), and the third term is a constant.
	\end{proof}

	\begin{proposition}\label{prop:inequalities-weakly-continuous}
		Let $p\in[1,\infty)$, $E\subseteq \IR$ measurable, and let $f^{(n)}$, $g^{(n)}$ be two sequences that converge weakly in $L^p(E)$ to $f$ and $g$, respectively, such that $f^{(n)} \leq g^{(n)}$ holds almost everywhere for all $n\in \IN$.
		Then, $f\leq g$ holds almost everywhere.
	\end{proposition}
	\begin{proof}
		Note that $h^{(n)}\coloneqq g^{(n)} - f^{(n)}$ converges weakly in $L^p(E)$ to $h\coloneqq g - f$ and $h^{(n)}$ is non-negative almost everywhere.
		We have to show that $h$ is also non-negative almost everywhere.
		Thus, assume for contradiction that the set $A \coloneqq \{ \theta \in E \mid h(\theta) < 0 \}$ has positive measure, and let $B$ be a subset of $A$ of positive, but finite measure.
		Since $\CharF[B]$ is in $L^q(E)$ (where $q$ is the conjugate of $p$ satisfying $\nicefrac{1}{p} + \nicefrac{1}{q} = 1$), $h^{(n)}$ converges weakly, we have \[
			0 \leq \lim_{n\to \infty} \int_E h^{(n)} \cdot \CharF[B] \diff\leb = \int_B h \diff\leb < 0,
		\]
		a contradiction.
	\end{proof}
\end{revisedEnv}

\begin{proposition}\label{prop:weakly-metrizable}
	Let $X$ be a reflexive, separable Banach space over $\IR$.
	Let $K\subseteq X$ be a norm bounded subset, \ie there is some $B\in\IR$ with $\norm{x} \leq B$ for all $x\in K$.
	Then, $K$ is weakly metrizable, \ie the subspace topology on $K$ induced by the weak topology on $X$ is metrizable.
	In particular:
	\begin{enumerate}[label=(\roman*)]
		\item\label{prop:weakly-metrizable:weakly-closed-iff-seq-weakly-closed} A set $K'\subseteq K$ is weakly closed in $K$ if and only if it is sequentially weakly closed in $K$.
		\item\label{prop:weakly-metrizable:weakly-closed-in-X-iff-seq-weakly-closed-in-X} If $K$ is closed, then $K'\subseteq K$ is weakly closed in $X$ if and only if it is sequentially weakly closed in $X$.
	\end{enumerate}
\end{proposition}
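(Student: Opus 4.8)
The plan is to prove the central assertion — that the subspace topology on $K$ induced by the weak topology of $X$ is metrizable — and then to read off \ref{prop:weakly-metrizable:weakly-closed-iff-seq-weakly-closed} and \ref{prop:weakly-metrizable:weakly-closed-in-X-iff-seq-weakly-closed-in-X} from standard facts about metric (and compact metric) spaces. Two structural inputs drive everything: reflexivity of $X$, which by Kakutani's theorem makes every norm-closed ball of $X$ weakly compact; and separability of $X$, which together with reflexivity forces $X^\ast$ to be separable as well, since $X^\ast$ has the separable dual $X^{\ast\ast}=X$ and a Banach space with separable dual is itself separable. The latter is what lets us write down an explicit metric built from countably many functionals.

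Concretely, I would first fix a norm-closed ball $\mathcal B\coloneqq\Set{x\in X | \norm{x}\le B}\supseteq K$, which is weakly compact. Then I would choose a countable set $\Set{f_n | n\in\IN}$ dense in the closed unit ball of $X^\ast$ and define
\[
	d(x,y)\coloneqq \sum_{n\in\IN} 2^{-n}\min\{1,\abs{f_n(x-y)}\}\qquad(x,y\in X).
\]
This is a genuine metric on $X$: symmetry and the triangle inequality pass term by term (using that $t\mapsto\min\{1,t\}$ is nondecreasing and subadditive on $[0,\infty)$), and $d(x,y)=0$ forces $f_n(x-y)=0$ for all $n$, hence $f(x-y)=0$ for all $f\in X^\ast$ by density, hence $x=y$ by Hahn--Banach. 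Each summand $x\mapsto \min\{1,\abs{f_n(x-y)}\}$ is weakly continuous and the series converges uniformly on $X$, so $d(\emptyarg,y)$ is weakly continuous; therefore the identity map $\iota\colon(\mathcal B,\mathrm{weak})\to(\mathcal B,d)$ is continuous. Since $(\mathcal B,\mathrm{weak})$ is compact and $(\mathcal B,d)$ is Hausdorff, $\iota$ is a homeomorphism, so the weak topology on $\mathcal B$ coincides with the $d$-metric topology; in particular it is metrizable, and hence so is its subspace topology on $K$.

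It then remains to deduce the two consequences. For \ref{prop:weakly-metrizable:weakly-closed-iff-seq-weakly-closed}, one applies to $(K,\mathrm{weak})$ the elementary fact that in a metrizable space a subset is closed iff it is sequentially closed. For \ref{prop:weakly-metrizable:weakly-closed-in-X-iff-seq-weakly-closed-in-X}, note that the weak closure $\overline{K}^{w}$ is a weakly closed subset of the weakly compact ball $\mathcal B$, hence weakly compact, and — being norm bounded — weakly metrizable by the central assertion; since $K'\subseteq K\subseteq\overline{K}^{w}$ and $\overline{K}^{w}$ is weakly closed, the weak closure of $K'$ in $X$ lies inside $\overline{K}^{w}$, so by metrizability every point of it is the weak limit of a sequence from $K'$, and thus if $K'$ is sequentially weakly closed in $X$ it is already weakly closed in $X$ (the converse implication being trivial). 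The main obstacle is the metrizability step: a priori the $d$-topology on $\mathcal B$ could be strictly finer than the weak one, and it is precisely the weak compactness of $\mathcal B$ (via reflexivity and Kakutani) that collapses this gap through the compact-to-Hausdorff homeomorphism argument; a secondary point to get right is the passage from "$X$ separable" to "$X^\ast$ separable", which is where reflexivity is genuinely used.
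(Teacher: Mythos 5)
Your proof is correct and follows the same high‑level strategy as the paper: from reflexivity and separability of $X$, deduce that $X^\ast$ is separable, and from that deduce that bounded subsets of $X$ are weakly metrizable; then read off the two sequential‑closure statements. The difference is one of presentation depth: where the paper simply cites textbook results for the metrizability of the closed unit ball (and a standard reference for \ref{prop:weakly-metrizable:weakly-closed-iff-seq-weakly-closed}), you carry out the classical construction explicitly — a countable dense set $\{f_n\}$ in the dual unit ball, the metric $d(x,y)=\sum_n 2^{-n}\min\{1,\abs{f_n(x-y)}\}$, and the compact‑to‑Hausdorff argument that collapses the weak and $d$‑metric topologies on the ball. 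All of these steps are correct; in particular, your use of Kakutani to get weak compactness of the closed ball and your verification that the identity map is a continuous bijection from a compact space to a Hausdorff space (hence a homeomorphism) is exactly the right way to close the gap between the a priori coarser weak topology and the $d$‑topology.

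One genuine divergence from the paper is in part~\ref{prop:weakly-metrizable:weakly-closed-in-X-iff-seq-weakly-closed-in-X}. The paper disposes of it with ``follows directly from~\ref{prop:weakly-metrizable:weakly-closed-iff-seq-weakly-closed}'', which tacitly uses that $K$ is \emph{weakly} closed so that weak (resp.\ sequential weak) closedness in $K$ and in $X$ coincide for subsets of $K$; in the paper's application $K$ is norm‑closed, bounded, and convex, hence weakly closed, so this is fine. Your argument instead looks directly at $\overline{K'}^{w}$, notes it sits inside the weakly compact ball, applies the metrizability of bounded sets to it, and concludes that every point of $\overline{K'}^{w}$ is a weak sequential limit from $K'$. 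This is self‑contained, and — as a side effect — it shows the ``$K$ is closed'' hypothesis in~\ref{prop:weakly-metrizable:weakly-closed-in-X-iff-seq-weakly-closed-in-X} is not actually needed: norm boundedness already suffices. That is a mild but genuine strengthening over the paper's phrasing, and both routes are valid.
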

\begin{proof}
	\newcommand*{\ball}{U}
	Clearly, as $X$ is reflexive, its continuous dual $X'$ is reflexive as well.
	As the dual of $X'$ is the separable space $X$, $X'$ is also separable (\cf \citealt[Theorem~4.6-8]{Kreyszig1989}).
	Therefore, the closed unit ball $\ball_1(0)$ in $X$ is weakly metrizable (\cf \citealt[Theorem~6.31]{InfiniteDimensionalAnalysis}, \citealt[Theorem~3.35]{Aliprantis2006a}).
	Then, the closed ball around $0\in X$ with radius $B$ is also weakly metrizable (by scaling the arguments of the metric with $1/B$).
	Restricting this metric to $K$ gives the desired metric.

	For statement \ref{prop:weakly-metrizable:weakly-closed-iff-seq-weakly-closed}, see \cite[Lemma~3.3]{InfiniteDimensionalAnalysis};
	\ref{prop:weakly-metrizable:weakly-closed-in-X-iff-seq-weakly-closed-in-X} follows directly from  \ref{prop:weakly-metrizable:weakly-closed-iff-seq-weakly-closed}.
\end{proof}

\clearpage

\clearpage
\section{Comments on \cite[Lemma 3.3]{Bayen2019}}\label{sec:bayen-lemma}

As mentioned in \Cref{sec:uniqueness}, we briefly discuss counterexamples for the statement in \cite[Lemma~3.3]{Bayen2019} which serves the authors as a building block for their uniqueness and existence proof of coherent flows with Lipschitz-continuous, prescriptive routing operators (see \citealt[Proof of Theorem~3.4]{Bayen2019}).
The lemma is stated as follows:

\begin{lemma}[{\citealt[Lemma 3.3]{Bayen2019}}]\label{lem:BayenUniquenessLemma}
	Let $n\in\IN$, $L,\horizon\in \IRp$, and $\Omega\subseteq C([0, \horizon ], \IR^n)$ closed in the induced topology be given.
	Let $\Psi: \Omega \to \Omega$ be a Lipschitz-continuous self-mapping for which there exists some $\alpha > 0$ such that for all $\theta\in[0,\horizon]$ and $x,\tilde x\in\Omega$ we have
	\[
		\norm{\restr{\Psi(x)}{[0,\theta]} - \restr{\Psi(\tilde x)}{[0,\theta]}}_{\revMinor{\infty}}
		\leq L\cdot \theta^\alpha\cdot \norm{\restr{x}{[0,\theta]} - \restr{\tilde x}{[0,\theta]}}_{\revMinor{\infty}}.
	\]
	Then, there exists a unique fixed-point $x^*\in\Omega$ of $\Psi$.
\end{lemma}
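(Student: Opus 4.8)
The plan is to imitate the standard ``Banach fixed point on a short initial interval, then patch'' strategy, using the Banach Fixed-Point Theorem (\Cref{thm:BanachFixedPoint}) as the engine. \textbf{Step 1 (causality).} Apply the hypothesis to $x,\tilde x\in\Omega$ that agree on $[0,t]$: the right-hand side vanishes, so $\restr{\Psi(x)}{[0,t]}=\restr{\Psi(\tilde x)}{[0,t]}$. Hence $\Psi$ is causal and descends to well-defined maps $\Psi_t$ on the restrictions $\restr{\Omega}{[0,t]}$, compatibly in $t$. \textbf{Step 2 (initial interval).} Pick $\delta>0$ with $L\delta^\alpha<1$; by the hypothesis with $t=\delta$, $\Psi_\delta$ is a contraction for the sup norm, so the solution is uniquely determined (and exists) on $[0,\delta]$. \textbf{Step 3 (continuation).} Given the solution $y$ on $[0,t_k]$, restrict attention to the set $A_{t_{k+1}}$ of continuous functions on $[0,t_{k+1}]$ that agree with $y$ on $[0,t_k]$; by causality $\Psi_{t_{k+1}}$ maps $A_{t_{k+1}}$ into itself, and since such functions differ only on $[t_k,t_{k+1}]$, the hypothesis gives a Lipschitz bound with constant $L\,t_{k+1}^\alpha$ for this restricted map. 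If this is $<1$, we extend the unique solution to $[0,t_{k+1}]$ and iterate. \textbf{Step 4 (conclusion).} Concatenating finitely many such steps until $t_k=T$ would give a fixed point on $[0,T]$, and uniqueness on $[0,T]$ would follow from uniqueness on each piece together with causality.

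The decisive obstacle is Step 3, and I expect it to be \emph{fatal rather than merely technical}. The Lipschitz factor produced by the hypothesis on the continuation step is $L\,t_{k+1}^\alpha$ with $t_{k+1}$ measured from the origin, \emph{not} $L\,(t_{k+1}-t_k)^\alpha$; shrinking the new subinterval $[t_k,t_{k+1}]$ therefore does nothing to make the factor small. So the continuation is a genuine contraction only while $t_{k+1}<(1/L)^{1/\alpha}$, and once one reaches $(1/L)^{1/\alpha}$ the argument stalls. When $L\,T^\alpha<1$ this is harmless (then $\Psi$ is outright a contraction on the complete space $\Omega$, and \Cref{thm:BanachFixedPoint} applies directly), but for $T$ large relative to $(1/L)^{1/\alpha}$ no amount of rearranging rescues the conclusion: the global factor $L\,T^\alpha\geq 1$ genuinely matters.

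Consequently, rather than forcing the patching through, the honest outcome of this plan is the realization that \Cref{lem:BayenUniquenessLemma} as stated is false for large $T$, and what I would actually present in this section are explicit counterexamples witnessing this -- a causal self-map $\Psi$ of a suitable closed $\Omega\subseteq C([0,T],\IR^n)$ satisfying the displayed inequality that has no fixed point (or more than one). What does survive, and what is in fact all that is needed elsewhere in this paper (cf.\ \Cref{thm:uniqueExtIfContraction,lem:unique-ext-prop-implies-uniqueness}), is the purely local statement extracted from Steps 1--3: from a solution known up to some time one can always extend it uniquely by a strictly positive amount, with global existence and uniqueness then deduced from a separate extension-and-supremum argument rather than from a single contraction on $[0,T]$.
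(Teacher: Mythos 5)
Your diagnosis matches the paper's exactly: \Cref{lem:BayenUniquenessLemma} is not proved in the paper, it is \emph{refuted} (in \Cref{sec:bayen-lemma}), and the failure mechanism is precisely the one you isolate in Step~3 --- the Lipschitz factor $L\,t^\alpha$ in the hypothesis is anchored at the origin, so the continuation stalls once the running time hits $(1/L)^{1/\alpha}$, and no amount of shrinking $[t_k,t_{k+1}]$ helps when $L\,T^\alpha \geq 1$. You also correctly identify what survives: causality, a genuine Banach contraction on short initial intervals $[0,T']$ with $T' < (1/L)^{1/\alpha}$, and the local extension-by-a-positive-amount statement, which the paper then feeds into \Cref{thm:uniqueExtIfContraction} and \Cref{lem:unique-ext-prop-implies-uniqueness} instead of relying on Bayen et al.'s incorrect global claim.

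What you defer and the paper actually delivers are the concrete counterexamples, and here you should not stop short: a stalled proof attempt does not by itself establish falsity, since a cleverer argument might still prove the claim. The paper closes the gap by exhibiting, on $\Omega = C([0,1],\IRnn)$ with $\alpha = 1$, the map $\Psi(x)(t) = t\,x(t) + 1$ (hypothesis holds with $L=1$; a fixed point would require $x^*(1) = x^*(1)+1$, impossible) and the map $\Psi(x)(t) = \CharF_{[\frac{1}{2},1]}(t)\cdot\min\{t - \tfrac{1}{2},\, x(t)\}$ (hypothesis with $L=2$; $x^*(t) = \lambda\cdot\CharF_{[\frac{1}{2},1]}(t)\cdot(t-\tfrac{1}{2})$ is a fixed point for every $\lambda\in[0,1]$). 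You have already located the failure mechanism, and the counterexamples drop out of it naturally --- write them down to actually complete the refutation.
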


However, the assumptions of this \namecref{lem:BayenUniquenessLemma} neither guarantee existence of a fixed point nor their uniqueness.

\paragraph{Counterexample for existence}
We consider the following counterexample:
We define $n \coloneqq 1$, $\Omega \coloneqq C([0,1], \IRnn)$, and $\Psi$ as the mapping 
\[
	\Psi: \Omega \to \Omega, ~ x \mapsto (\theta \mapsto \theta \cdot x(\theta) + 1).
\]
Then, $\Omega$ is closed in $C([0, 1], \IR)$ and $\Psi$ is a Lipschitz-continuous self-mapping with Lipschitz-constant~$1$.
Moreover, for any $\theta\in[0,1], x,\tilde x\in\Omega$ we have
	\begin{equation*}
		\norm{\restr{\Psi(x)}{[0,\theta]} - \restr{\Psi(\tilde x)}{[0,\theta]}}_{\revMinor{\infty}}
		= \sup_{\theta'\in[0,\theta]} \abs{\theta'\cdot (x(\theta') - \tilde x(\theta'))}
		\leq \theta \cdot \norm{\restr{x}{[0,\theta]} - \restr{\tilde x}{[0,\theta]}}_{\revMinor{\infty}}.
	\end{equation*}
Hence, choosing $L\coloneqq \alpha\coloneqq 1$, $\Psi$ fulfils the assumptions of the lemma.

Now, assume that $x^*$ is a fixed point of $\Psi$.
Then, for all $\theta\in[0,1]$ we must have $x^*(\theta) = \theta\cdot x^*(\theta) + 1$, which implies $0 = 1$ for $\theta = 1$, a contradiction.
Thus $\Psi$ does not have any fixed point.

\paragraph{Counterexample for uniqueness}
Again, we define $n \coloneqq 1$, $\Omega \coloneqq C([0,1], \IRnn)$.
Here, $\Psi$ is the mapping 
\[
	\Psi: \Omega \to \Omega, ~ x \mapsto \theta \mapsto \begin{cases}
		0, &\text{if $\theta < \frac{1}{2}$}, \\
		\min\{\theta - \frac{1}{2}, x(\theta)\}, &\text{if $\theta \geq \frac{1}{2}$.}
	\end{cases}
\]
Clearly, $\Omega$ is closed in $C([0, 1], \IR)$ and $\Psi$ is a Lipschitz-continuous self-mapping with Lipschitz-constant~$1$.
Furthermore, for $\theta\in[0,\frac{1}{2}], x,\tilde x\in\Omega$ we have $\smallnorm{\restr{\Psi(x)}{[0,\theta]} - \restr{\Psi(\tilde x)}{[0,\theta]}} = 0$ and for $\theta\in(\frac{1}{2}, 1]$ we have \begin{align*}
	\norm{\restr{\Psi(x)}{[0,\theta]} - \restr{\Psi(\tilde x)}{[0,\theta]}}_{\revMinor{\infty}}
	&= \sup_{\theta'\in[\frac{1}{2},\theta]} \abs{\min\{\theta' - \frac{1}{2}, x(\theta')\} - \min\{\theta'- \frac{1}{2}, \tilde x(\theta')\}}
	\\& \leq \norm{\restr{x}{[0,\theta]} - \restr{\tilde x}{[0,\theta]}}_{\revMinor{\infty}}
	\leq 2\cdot \theta^1 \cdot \norm{\restr{x}{[0,\theta]} - \restr{\tilde x}{[0,\theta]}}_{\revMinor{\infty}}.
\end{align*}
Choosing $L\coloneqq 2$ and $\alpha\coloneqq 1$, $\Psi$ again fulfils the assumptions of the lemma.
However, for every $\revMinor{\mu}\in[0,1]$, the function $x^*(\theta)\coloneqq \revMinor{\mu}\cdot \CharF_{[\frac{1}{2}, 1]}(\theta)\cdot (\theta-\frac{1}{2})$ is a fixed point of $\Psi$.

\paragraph{Comment}

As we have seen, the assumptions of the lemma do not guarantee existence nor uniqueness of a fixed point of $\Psi: \Omega\to\Omega$.
However, since the continuity assumption on $\Psi$ implies that it is causal, \ie for all $x,\tilde x\in\Omega$ and $\theta\in[0,\horizon]$ we have that $\restr{x}{[0,\theta]} = \restr{\tilde x}{[0,\theta]}$ implies $\restr{\Psi(x)}{[0,\theta]} = \restr{\Psi(\tilde x)}{[0, \theta]}$, we can  ``restrict'' $\Psi$ to functions defined on smaller subintervals $[0,\horizon'] \subseteq [0,\horizon]$. Then, one can show that this restriction $\Psi'$ is guaranteed to have a unique fixed point for small enough intervals $[0,\horizon']$. 
More specifically, for any $\horizon' < (\frac{1}{L})^{1/\alpha}$, we define $\Psi'$ as the mapping \[
	\Psi': \restr{\Omega}{[0,\horizon']} \to \restr{\Omega}{[0,\horizon']}, f\mapsto \restr{\Psi(\tilde f)}{[0,\horizon']}
\]
where $\restr{\Omega}{[0,\horizon']}\coloneqq \{ \restr{f}{[0,\horizon']} \mid f\in\Omega \}$ and $\tilde f$ is any extension of $f$ in $\Omega$ (by causality, the exact choice of $\tilde f$ does not matter).
Then, $\Psi'$ has a unique fixed point in $\restr{\Omega}{[0,\horizon']}$ by the Banach Fixed-Point Theorem, as (by the assumptions of the \namecref{lem:BayenUniquenessLemma} and the choice of $'$) $\Psi'$ is a contraction with contraction constant $L\cdot \horizon'^\alpha < 1$.\footnote{For example, for the $\Psi$ defined in the existence-counterexample, the unique fixed point of $\Psi'$ (for any $\horizon'<1$) is given by $x^*(\theta)=1/(1-\theta)$.}

\Textcite{Bayen2019} also used this observation in the first (correct) part of their proof of \Cref{lem:BayenUniquenessLemma}. 
However, they then (incorrectly) claim that this step may be applied iteratively in order to ``extend'' the fixed point of $\Psi'$ to a fixed point of $\Psi$. Hence, under the assumptions of this \namecref{lem:BayenUniquenessLemma} its conclusion only holds for restrictions of~$\Psi$ to small enough intervals.

Nevertheless, as we show here in \Cref{thm:uniqueExtIfContraction}, this weaker version of \Cref{lem:BayenUniquenessLemma} still suffices to show the unique extension property as well as the extension existence property and, hence, the unique existence of coherent flows.
Thus, the conclusion \citeauthor{Bayen2019} draw from \Cref{lem:BayenUniquenessLemma} (namely \citealt[Theorem~3.4]{Bayen2019}) remains true (with some adjustments of the proof).

\clearpage

\section{List of Symbols}\label{sec:list-of-symbols}

\newenvironment{lostable}{\renewcommand{\arraystretch}{1.15}\begin{tabular}{p{.15\textwidth}p{.75\textwidth}}Symbol & Description \\\hline
}{\end{tabular}}
\newcommand{\lossection}[1]{\hline\multicolumn{2}{l}{\textbf{#1:}}\\\hline}
\newcommand{\losentry}[3][]{\ensuremath{#2} & #3 \ifthenelse{\equal{#1}{}}{}{(\cf \Cref{#1})}\\}

\par
\begin{lostable}
	\losentry{\IRnnInf}{the extended non-negative reals $\IRnn \cup \{\infty\}$}
	\losentry{\CharF[J]}{the indicator function of a set $J \subseteq \IR$}
	\losentry{\CharF[v \neq \dest_i]}{equal to $1$ if $v \neq \dest_i$ and to $0$ otherwise}
	\losentry{\pInt,\pLocInt}{set of (locally) $p$-integrable functions on $\IR$ (with $1 < p< \infty$)}
	\losentry{\norm{f}_p}{$p$-norm of $f$ (with $1 \leq p < \infty$)}
	\losentry{\int_J f\diff \leb}{integral of $f$ over $J \subseteq \IR$ with respect to the Lebesgue measure~$\leb$}
	\losentry{\foraall \theta}{for almost all $\theta\in\IR$}
	\losentry{\horizon}{a time horizon $\horizon \in \IRnnInf$}
	
	\losentry{\rateFcts, \rateFcts_T}{set of \revised{flow} rate functions, \ie the set of all non-negative, locally $p$-integrable functions in $\pLocInt$ with (essential) support in $\IRnn$ and in $[0,\horizon]$, respectively.}
	\losentry{\revised{\splitFcts, \splitFcts_\horizon}}{\vphantom{}\revised{set of measurable functions from $\IR$ to $[0,1]$ with (essential) support in $\IR$ and $[0,\horizon]$, respectively.}}
	
	\losentry{G=(V,E)}{directed graph with node set~$V$ and edge set~$E$}
	\losentry{\edgesFrom{v}, \edgesTo{v}}{set of edges leaving and entering node $v$, respectively}
	\losentry{\PathSet_{v,w}}{set of simple \stpath[v][w]s}
	
	\losentry{I}{finite set of commodities}
	\losentry{u_{v,i}}{network inflow rate of commodity~$i$ at node $v$}
	\losentry{\dest_i}{destination node of commodity~$i$}
	\losentry{f = (f^+_{e,i},f^-_{e,i})}{dynamic flow consisting of edge inflow rates~$f^+_{e,i}$ and edge outflow rates~$f^-_{e,i}$ for every edge $e \in E$ and every commodity $i \in I$}
	
	\losentry{\EdgeLoading, \EdgeLoading_\horizon}{an edge loading operator mapping vectors of edge inflow rates to vectors of edge outflow rates, and its pendant for finite-time horizon $T$}
	\losentry{\ROp, \ROp_\horizon}{a routing operator mapping flows to sets of allowed flow splits, and its pendant for finite-time horizon $\horizon$}
	\losentry{\ROe}{an element of $\ROp(f)$; if $\ROp$ is prescriptive, we write $\smash{\ROp(f) \eqqcolon \{ \ROe(f) \}}$.}

	\losentry{\Omega_{\horizon,\alpha}^+(f^+)}{set of possible extensions of the inflow rates $f^+$ of a flow with time horizon~$\horizon$ on $[\horizon,\horizon+\alpha]$}
	\losentry{\Omega_{\horizon,\alpha}(f)}{set of possible extensions of a flow $f$ with time horizon~$\horizon$ on $[\horizon,\horizon+\alpha]$}
	
	\losentry{\pCost_{i,\Path}\revMinor{(\theta, f)}}{predicted cost of entering path~$p$ when entering at time~$\theta$ under flow~$f$; in \Cref{sec:SPE}, this is a random variable.}
	\losentry{\pEdges_i(\theta,f)}{active edges of commodity~$i$ at time~$\theta$ under flow~$f$}
	\losentry{\pActive_{e,i}(f)}{times at which edge~$e$ is active for commodity~$i$ under flow~$f$}
	\losentry{\ttime_e(\theta,f)}{travel time induced by flow~$f$ on edge~$e$ when entering at time~$\theta$}
	\losentry{\exitTime_e(\theta,f)}{exit time from edge~$e$ when entering at time~$\theta$ under flow~$f$}
	\losentry{\Prob_i}{probability measure over prediction functions for commodity~$i$}
	\losentry{\tilde E_i(\theta, f, \hat C)}{set of percieved active edges at time~$\theta$ \wrt flow $f$ and predictor $\hat C$}
\end{lostable} 
\fi 
\end{document}